\def\unit{\vone}
\def\bPi{\mathbf{\Pi}}
\def\Mhat{\hat M}
\def\hlinespace{\vrule height 11pt depth 5pt width 0pt}
\def\Rplus{\boldsymbol{R^+}}
\def\normDgamma#1{|\!|\!|#1|\!|\!|}
\def\seminormff#1#2{|\!|\!|#1\,\mathord{;}\,#2|\!|\!|}
\def\seminormbox#1{[\hspace{-.1em}]#1[\hspace{-.1em}]}
\def\KQ{K^Q}
\def\chiQ{\chi^Q}
\def\JQhat{\hat\cJ^Q}
\DeclareMathOperator{\vspan}{span}
\DeclareMathOperator{\Id}{Id}
\def\fraks{\mathfrak{s}}
\def\fraK{\mathfrak{K}}
\def\fraKbar{\overline{\mathfrak{K}}}
\def\abss#1{\abs{#1}_\mathfrak{s}}
\def\Tbar{\overline T}
\def\Zbar{\overbar Z}
\def\Gammabar{\overline\Gamma}
\def\Pibar{\overline\Pi}
\def\Zhat{\widehat Z}
\def\Gammahat{\widehat\Gamma}
\def\Pihat{\widehat\Pi}
\begin{document}


\title{Regularity structures and renormalisation of\\
FitzHugh--Nagumo SPDEs in three space dimensions}
\author{Nils Berglund and Christian Kuehn}
\date{}   

\maketitle

\begin{abstract}
\noindent
We prove local existence of solutions for a class of suitably renormalised
coupled SPDE--ODE systems driven by space-time white noise, where the space
dimension is equal to $2$ or $3$. This class includes in particular the
FitzHugh--Nagumo system describing the evolution of action potentials of a large
population of neurons, as well as models with multidimensional gating variables.
The proof relies on the theory of regularity structures recently developed by
M.\ Hairer, which is extended to include situations with semigroups that are not
regularising in space. We also provide explicit expressions for the
renormalisation constants, for a large class of cubic nonlinearities.
\end{abstract}

\leftline{\small{\it June 16, 2015.\/} }
\leftline{\small{\bf Submitted version.}}
\leftline{\small 2010 {\it Mathematical Subject Classification.\/} 
60H15, 
35K57 (primary),    
81S20,   
82C28 (secondary).  
}
\noindent{\small{\it Keywords and phrases.\/}
Stochastic partial differential equations, 
parabolic equations,
reaction--diffusion equations, 
FitzHugh--Nagumo equation,
regularity structures, 
renormalisation.
}  


\section{Introduction}
\label{sec_intro}

Many spatially extended systems can be described by a reaction-diffusion
equation coupled to an ordinary differential equation, of the form 
\begin{align}
\nonumber
\partial_t u &= \mathfrak{D} \Delta_x u + F(u,v)\;, \\
\partial_t v &= G(u,v)\;,
\label{eq:intro01} 
\end{align}
where $u=u(t,x)$ and $v=v(t,x)$ are functions of time $t\geqs0$ and space
$x\in\R^d$ and take values in $\R^m$ and $\R^n$ respectively, $\Delta_x$ denotes
the usual Laplacian, $\mathfrak{D}\in\R^{m\times m}$ is a positive definite
matrix of diffusion coefficients, and $F:\R^m\times \R^n\rightarrow \R^m$ and
$G:\R^m\times \R^n\rightarrow \R^n$ are given sufficiently smooth functions. A
well-studied situation of this kind arises in neuroscience, where $x\in\R$
measures the position along a neuron's axon, $u(t,x)\in\R$ is the membrane
potential at time $t$ and position $x$, and the so-called gating variables
$v\in\R^n$ describe the number of open ion channels of $n$ different
types. Classical examples are the Hodgkin--Huxley model~\cite{HodgkinHuxley52}
and its simplifications such as the Morris--Lecar model~\cite{MorrisLecar81} and
the FitzHugh--Nagumo model~\cite{FitzHugh61,Nagumo62}.  However, there are many
more interesting situations outside neuroscience that can be described by
coupled systems of the form~\eqref{eq:intro01}, for instance in chemical
kinetics~\cite{TysonKeener}, in population dynamics~\cite{CantrellCosner}, and
in pattern formation~\cite{Barkley2}. Furthermore, many other classical
reaction-diffusion equations from mathematical biology, such as the Keller-Segel
\cite{KellerSegel} or the Gray-Scott model~\cite{GiererMeinhardt} are of the
form~\eqref{eq:intro01} if one regards the model parameters $v$ not as constants
via $\partial_t v = 0$ but as (slowly-)changing parameters via $\partial_t
v=\tilde{\varepsilon} \widetilde{G}(u,v)$ for some small parameter
$0<\tilde\varepsilon \ll1$ \cite{KuehnBook} and some smooth vector field
$\widetilde{G}$. In fact, the number of applications of systems of the
form~\eqref{eq:intro01} goes far beyond the areas we mentioned here.

However, in some cases, a deterministic model of the form~\eqref{eq:intro01} is
not sufficient to describe the dynamics, and one has to add a stochastic term to
capture the effect of thermal fluctuations and other unresolved dynamical
processes affecting the system. In the case of neuron dynamics, see for instance
the book and recent survey by Tuckwell~\cite{Tuckwell,Tuckwell2} and the recent
reviews by Bressloff~\cite{Bressloff_2012,Bressloff_2014}. In particular,
stochastic versions of the FitzHugh-Nagumo PDE have attracted considerable
recent attention~\cite{Tuckwell1,Tuckwell_Jost_2011,SauerStannat1} as well as
related models of stochastic neural
fields~\cite{KuehnRiedler,Kruger_Stannat_2014}. In the derivation of these
models and in related numerical studies, one frequently uses
correlated~\cite{LordPowellShardlow} as well as space-time white
noise~\cite{AlzubaidiShardlow} stochastic forcing.

Before studying properties of solutions to a stochastic partial differential
equation (SPDE) with space-time white noise, one has to ensure that such an
equation is well-defined, i.e., one has to attach a meaning of \lq\lq
solution\rq\rq\ to a given SPDE. Depending on the space dimension $d$, this
problem may in fact be extremely hard. Consider the case of a one-component
system ($m=1$, $n=0$, $\mathfrak{D}\equiv 1$) of the form 
\begin{equation}
 \label{eq:intro02}
 \partial_t u = \Delta_x u + F(u) + \xi\;, 
\end{equation} 
where $\xi$ denotes space-time white noise (precise mathematical definitions
will be given below). If $d=1$, it is possible to define a notion of mild
solution via the Duhamel principle in a quite general setting (see for
instance~\cite{DaPrato_Jabczyk_92}). In higher space dimension, the problem
is much more difficult, owing to the fact that space-time white noise is
extremely singular. 

Consider for instance the case of the Allen--Cahn equation given
by~\eqref{eq:intro02} with $F(u)=au-u^3$ (if $a=0$ this is also known in
quantum field theory as the dynamical $\Phi_d^4$ model, cf~\cite{Feldman74}).
In the case $d=1$, the proof of existence of a unique solution goes back to
Faris and Jona-Lasinio~\cite{Faris_JonaLasinio82}, and many quantitative
properties of this solution are known (see for
instance~\cite{Cerrai_1996,BG12a,Barret_2015}). The two-dimensional case was
solved by Da Prato and Debussche using a particular class of Besov
spaces~\cite{DaPrato_Debussche_2003}. The case $d=3$, however, was only solved
very recently by Hairer, using his theory of regularity
structures~\cite{Hairer2014}. One of the difficulties is that a renormalisation
procedure has to be used to properly define solutions. This is achieved by
replacing space-time white noise $\xi$ by a mollified version $\xi^\eps$,
solving the resulting regularised equation, and passing to the limit of
vanishing regularisation (for an alternative approach,
see~\cite{Kupiainen_2014}). 

The theory of regularity structures provides a framework to study SPDEs with
very singular noise, including but not limited to parabolic equations of the
form~\eqref{eq:intro02}. The basic idea is to construct an abstract space in
which one can define algebraic operations on distributions (in the sense of
generalised functions), such as multiplication, composition with a smooth
function, and convolution with a kernel. The fixed-point equation obtained by
applying Duhamel's principle to the SPDE with mollified noise is then lifted to
the abstract space, solved in that space, and finally projected down to a
distribution in \lq\lq physical\rq\rq\ space. In addition, the theory allows to
incorporate the renormalisation procedure that is needed in most cases; see for
instance~\cite{Hairer_Ln_2013} for an introduction. Furthermore, there are
already several recent applications of regularity structures, e.g., to large
deviation theory of the Allen-Cahn equation~\cite{Hairer_Weber_15}, to the
dynamical sine-Gordon model~\cite{HairerShen14}, to Wong-Zakai approximation of
nonlinear parabolic SPDEs~\cite{HairerPardoux14}, to the parabolic Anderson
model on bounded~\cite{Hairer2014} as well as unbounded~\cite{HairerLabbe15}
domains, and to the KPZ equation~\cite{Hairer2014,Hairer2}. 

The purpose of the present work is to extend the theory of regularity structures
to multicomponent systems of the form
\begin{align}
\nonumber
\partial_t u &= \Delta_x u + F(u,v)+\xi\;, \\
\partial_t v &= G(u,v)\;.
\label{eq:intro03} 
\end{align}
The main result is the proof of existence, in space dimensions $d=2$ and $3$, of
local solutions to the system~\eqref{eq:intro03} when $F$ is a cubic
polynomial and $G$ is linear. This includes the case of the standard
FitzHugh--Nagumo model, but also other equations such as the Koper
model~\cite{Koper} with diffusion in the fast component, which features
vectorial variables~$v$. We will mostly focus on the case $d=3$, first because
this is the physically relevant case in many applications, but also because it
is the technically more challenging case (the Allen--Cahn equation is known not
to be renormalisable for $d\geqs4$, so the same will hold \textit{a fortiori}
for multicomponent systems with cubic nonlinearities). 

The main technical difficulty that has to be overcome to obtain these results
comes from the fact that the semigroup associated with the second equation
in~\eqref{eq:intro03} is not at all regularising in space. Though one
expects that this loss of regularisation is somehow compensated by the fact that
no singular noise term acts on the equation for $v$, the general theory
in~\cite{Hairer2014} cannot be applied directly, because it uses in an essential
way the assumption that the heat kernel is smooth everywhere except at the
origin. Therefore, we have to extend the regularity structure constructed for
the Allen--Cahn equation with new abstract symbols representing integration with
respect to a singular kernel, and to prove suitable bounds involving these
symbols. Furthermore, one has to analyse which role new symbols play in the
renormalisation. We find that certain terms involving the singular kernel do not
have to be renormalised, while on the other hand, general cubic
nonlinearities yield renormalisation terms which do not appear in the
Allen--Cahn case.

The remainder of this work is organised as follows. Section~\ref{sec_res}
contains all main local existence results. Section~\ref{sec_rs} gives a summary
of the most important aspects of the theory of regularity structures contained
in~\cite{Hairer2014}, illustrated in the case of the Allen--Cahn equation. In
Section~\ref{sec_extend}, we present our results allowing to represent the
operation of integration with respect to a singular kernel.
Section~\ref{sec_fix} contains the fixed-point argument proving local existence
and uniqueness of solutions for the SPDE with mollified noise, and
Section~\ref{sec_renorm} deals with the renormalisation procedure which is
necessary to pass to the limit of vanishing mollification. Finally, in
Section~\ref{sec_proof} we complete the proofs of the main results. 

\medskip

\noindent
\textbf{Notations:} We write $\abs{x}$ to denote either the absolute value of
$x\in\R$ or the $\ell^1$-norm of $x\in\R^{d}$, while $\norm{x}$ denotes the
Euclidean norm of $x\in\R^{d}$. If $a,b\in\R$ we use $a\wedge b:=\min\{a,b\}$
and $a\vee b:=\max\{a,b\}$. If $f, g$ are two real-valued functions depending on
small parameters $\eps, \delta, \dots$ (which will be clear from the context),
we write $f\lesssim g$ to indicate that there exists a constant $C>0$ such that
$f \leqs Cg$ holds uniformly in the small parameters. We use the notation
$f\asymp g$ to indicate that we have both $f\lesssim g$ and $g\lesssim f$. 

\medskip

\noindent
\textbf{Acknowledgements:} {N.B.}~would like to thank the Institute for Analysis
and Scientific Computing at the Technical University in Vienna for kind
hospitality and financial support. {C.K.}~would like to thank the Austrian
Academy of Sciences ({\"{O}AW}) for support via an APART fellowship. {C.K.}~also
acknowledges support of the European Commission (EC/REA) via a Marie-Curie
International Re-integration Grant.


\section{Results}
\label{sec_res}


\subsection{The standard FitzHugh--Nagumo equation}
\label{ssec_rfhn}

We start by considering the particular case of the standard
FitzHugh--Nagumo SPDE, given by 
\begin{align}
\nonumber
\partial_t u &= \Delta_x u + u - u^3 + v + \xi\;, \\
\partial_t v &= a_1 u + a_2 v\;. 
\label{eq:FHN01} 
\end{align}
Here $\Delta_x=\sum_{j=1}^d \partial_{x_j}^2$ is the usual Laplacian, 
$u$ and $v$ are functions of time $t\geqs0$ and space $x\in\T^d$ (the torus
in dimension $d=2$ or $d=3$), $\xi$ stands for space-time white noise on
$\R\times\T^d$, and $a_1, a_2$ a real parameters. As such, the
system~\eqref{eq:FHN01} is not well-posed, and a renormalisation procedure is
required to define a notion of solution. To do this, we first choose a rescaled
mollifier 
\begin{equation}
 \label{eq:FHN02}
 \varrho_\eps(t,x) = \frac{1}{\eps^{d+2}}
\varrho\biggpar{\frac{t}{\eps^2},\frac{x}{\eps}}\;,
\end{equation} 
where $\varrho:\R^{d+1}\to\R$ is a smooth compactly supported function of
integral $1$. We set $\xi^\eps=\varrho_\eps*\xi$, where the star stands for
space-time convolution, and consider the sequence of equations 
\begin{align}
\nonumber
\partial_t u^\eps &= \Delta_x u^\eps + \bigbrak{1+C(\eps)}u^\eps - (u^\eps)^3
+ v^\eps + \xi^\eps\;, \\
\partial_t v^\eps &= a_1 u^\eps + a_2 v^\eps\;, 
\label{eq:FHN03} 
\end{align}
where $C(\eps)\in\R$. 
Then our first result is the following, which is close in spirit
to~\cite[Thm.~1.15]{Hairer2014}. 

\begin{theorem}[Standard FitzHugh--Nagumo SPDE]
\label{thm:FHN}
Assume $u_0$ belongs to the H\"older space $\cC^\eta$ for some $\eta>-\frac23$,
and $v_0$ belongs to the H\"older space $\cC^\gamma$ for some $\gamma>1$. Then
there exists a choice of the constant $C(\eps)$ such that the
system~\eqref{eq:FHN03} with initial condition $(u_0,v_0)$ admits a sequence of
local solutions $(u^\eps,v^\eps)$, converging in probability to a limit $(u,v)$
as $\eps\to0$. This limit is independent of the choice of mollifier $\varrho$.  
\end{theorem}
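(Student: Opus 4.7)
The plan is to follow Hairer's strategy for the Allen--Cahn equation in $\cite{Hairer2014}$, suitably extended to accommodate the coupling to an ODE whose semigroup is not spatially regularising. First I would recast the mollified system~\eqref{eq:FHN03} in mild form via Duhamel's principle: writing $K$ for the heat kernel on $\R\times\T^d$ and $K_v$ for the kernel associated with $\partial_t - a_2$ (so that $K_v$ acts only in time, by convolution with $e^{a_2 t}\one_{\{t\geqs0\}}$, and is not regularising in space), the pair $(u^\eps,v^\eps)$ satisfies a coupled fixed-point equation of the schematic form
\begin{align*}
u^\eps &= K * \bigbrak{[1+C(\eps)]u^\eps - (u^\eps)^3 + v^\eps + \xi^\eps} + S_t u_0\;,\\
v^\eps &= K_v * \bigbrak{a_1 u^\eps} + \e^{a_2 t} v_0\;,
\end{align*}
where $S_t$ denotes the heat semigroup. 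The target is to lift this to the abstract level and pass to the limit $\eps\to0$.

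Next I would construct an extended regularity structure adapted to this system. In addition to the symbols $\Xi,\cI(\Xi),\cI(\Xi)^2,\cI(\Xi)^3,\dots$ needed for the $\Phi^4_3$ model, I would introduce a new abstract integration operator, say $\cI_v$, representing convolution with $K_v$, together with the trees it generates when composed with $\cI$ and the polynomial symbols. The analytic bounds on $\cI_v$ and the corresponding Schauder-type estimates supplied by Section~\ref{sec_extend} then make the abstract calculus well-defined even though $K_v$ fails to smooth in the spatial direction. With these tools in hand, Section~\ref{sec_fix}'s Banach fixed-point argument in a weighted space $\cD^{\gamma,\eta}$ of modelled distributions (for suitable $\gamma>0$ and $\eta$ dictated by the H\"older regularity of $u_0$ and $v_0$) yields, for each $\eps>0$, a unique local solution $(U^\eps,V^\eps)$ whose reconstruction reproduces the classical solution of~\eqref{eq:FHN03}.

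Convergence as $\eps\to0$ is then reduced, by the continuity of the solution map with respect to the underlying model, to constructing a limiting renormalised model $\Zhat$. Using the enumeration in Section~\ref{sec_renorm}, I would check which trees of negative homogeneity require subtraction. The two sources of divergence familiar from the Allen--Cahn case, namely $\cI(\Xi)^2$ (the variance of $K*\xi^\eps$) and the tree corresponding to $\cI(\Xi)^3\cdot\cI(\Xi)$, produce a single mass counterterm proportional to $u^\eps$; this is absorbed into $C(\eps)$. Trees in which $\cI_v$ appears integrate an already-reconstructible function rather than a distribution, so they inherit enough regularity from the absence of noise in the $v$-equation to require no renormalisation. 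Convergence of $\widehat Z^\eps$ to a limiting model $\widehat Z$ is established by the usual $L^2$ estimates in a fixed Wiener chaos and a Kolmogorov-type argument, and independence of $\varrho$ follows from the symmetry assumed on the mollifier combined with equality of the limiting renormalisation constants.

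The principal obstacle is the first of these steps: $K_v$ violates the standard kernel hypothesis in~\cite{Hairer2014}, which assumes smoothness away from the origin including in the spatial directions. The delicate point is to show that splitting $K_v$ into a singular piece, supported near $t=0$, and a smooth remainder still yields the reconstruction and multilevel Schauder bounds needed in the fixed-point and renormalisation arguments, and that the trees built from $\cI_v$ fit into the recursive structure of the model with homogeneities that make the fixed-point contraction still work. Once Section~\ref{sec_extend} has dealt with this, the remainder of the argument closely parallels the Allen--Cahn case, and the conclusion of Theorem~\ref{thm:FHN} follows by combining the abstract fixed point, the convergence of the renormalised model, and the reconstruction theorem.
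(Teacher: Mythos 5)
Your high-level roadmap — Duhamel, lift to an extended regularity structure with a new integration symbol for the non-regularising kernel, Schauder estimates, Banach fixed point in $\cD^{\gamma,\eta}$, construction and convergence of the renormalised model, reconstruction — is the same as the paper's, so the skeleton is sound.

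However, there is a concrete gap at the step you yourself flag as the principal obstacle, and the fix you propose would not work. You suggest treating $K_v$ by \lq\lq splitting $K_v$ into a singular piece, supported near $t=0$, and a smooth remainder\rq\rq\ and then running the usual multilevel Schauder argument. The paper is explicit that this fails: the kernel $K_v(t,x,\bar t,\bar x) = Q(t-\bar t)\,\delta(x-\bar x)$ contains a Dirac delta in the spatial variables, so there is no smooth remainder, and one cannot Taylor-expand $K_v$ in space to subtract the polynomial corrections $\cJ(z)$ and $\cN_\gamma$ that make $\cK_\gamma$ map $\cD^\gamma$ into $\cD^{\gamma+2}$ (see Remark~\ref{rem:JQhat}). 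As a result, an operator $\cK^Q_\gamma$ defined naively on all of $\cD^\gamma$ would land in some $\cD^{\gamma'}$ with $\gamma'<1$, destroying the contraction. The paper's actual resolution is to restrict the lift of $Q$ to the subspace $\cD^\gamma_0$ (Definition~\ref{def_Dgamma0}) of modelled distributions whose negative-homogeneity components are $z$-independent (equivalently, span only symbols $\tau$ with $\Delta\tau = \tau\otimes\unit$). On this subspace, $\cE$ is an abstract integration map of order $0$ on the sector $V=\{\tau:-2<\abss{\tau}<0\}$, and the Schauder estimates of Propositions~\ref{prop:Schauder_E} and~\ref{prop:Schauder_D_gamma_eta} go through because the positive-homogeneity part can be handled by a direct time-integral estimate using translation invariance of the canonical model (Lemma~\ref{lem_trans_inv}), while the constant negative-homogeneity part passes through $\cE$ untouched. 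One then verifies via the expansion~\eqref{eq:fix03} that the fixed point $U$ indeed lies in $\cD^{\gamma,\eta}_0$, which closes the circle. This restriction is the key new idea; without it your fixed-point argument does not close. A secondary slip: the second divergence in $d=3$ comes from $\RSWV = \cI(\cI(\Xi)^2)\cI(\Xi)^2$, not from $\cI(\cI(\Xi)^3)\cI(\Xi)$ as you wrote; and the paper further needs the explicit bounds of Lemma~\ref{lem:KKQ} to show that the $\cE$-decorated relatives of these trees (e.g.\ $\RSVo$, $\RSWVo$) in fact do \emph{not} require renormalisation, which is nontrivial rather than automatic.
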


The proof of this theorem and the two next ones is given in
Section~\ref{sec_proof}. For the precise definition of H\"older spaces
$\cC^\eta$ with negative index, we refer to Definition~\ref{def:C_alpha} below.
By \emph{local solution} we mean that for any cut-off $L>0$, the solution is
defined up to the random time when the $\cC^\eta$-norm of $(u^\eps,v^\eps)$ 
first reaches $L$. In other words, we cannot exclude the possibility of
finite-time blow-up. 

The constant $C(\eps)$ admits an explicit expression in terms of the heat
kernel\footnote{Below, the renormalisation constants will rather be defined in
terms of a truncated version of the heat kernel. This does however not affect
their singular parts (terms of order $\eps^{-1}$ and $\log(\eps^{-1})$).}
\begin{equation}
 \label{eq:FHN04}
 G^{(d)}(t,x) = \frac{1}{\abs{4\pi t}^{d/2}} \e^{-\norm{x}^2/4t}
\indexfct{t>0}\;.
\end{equation}
In dimension $d=3$, $C(\eps)$ is the same as for the dynamical $\Phi_3^4$ model
considered in~\cite{Hairer2014}. Namely, setting $G^{(d)}_\eps =
G^{(d)}*\varrho_\eps$, one
has 
\begin{equation}
 \label{eq:FHN05}
 C(\eps) = 3\int_{\R^4}G^{(3)}_\eps(z)^2\6z
 - 18 \int_{\R^4}G^{(3)}(z)\cQ^\eps_0(z)^2\6z\;,
\end{equation} 
where $\cQ^\eps_0(z) = \int G^{(3)}_\eps(z_1)G^{(3)}_\eps(z_1-z)\6z_1$. In
particular, it has been known since the works of Feldman and
Osterwalder~\cite{Feldman74,Feldman_Osterwalder_76} that 
\begin{equation}
 \label{eq:FHN06}
 C(\eps) = \frac{C_1}{\eps} + C_2 \log(\eps^{-1}) + C_3
\end{equation} 
for some constants $C_1, C_2, C_3\in\R$. The constants $C_1$ and $C_3$ depend on
the choice of mollifier  $\varrho$, while $C_2$ is independent of $\varrho$
(cf.~\cite[Rem.~6.2]{Hairer_Ln_2013}). 

In dimension $d=2$, one can show (cf.~\cite[Rem.~2.14]{Hairer_Weber_15}) that
the
renormalisation constant is given by 
\begin{equation}
 \label{eq:FHN07}
 C(\eps) = 3\int_{\R^3}G^{(2)}_\eps(z)^2\6z = \frac{3}{4\pi} \log(\eps^{-1}) +
C_3\;,
\end{equation} 
for some constant $C_3\in\R$, depending again on the choice of $\varrho$. 

The proof of Theorem~\ref{thm:FHN} will also provide some information on the
structure of the solutions. Indeed, we always have 
\begin{equation}
\label{eq:FHN08} 
u^\eps(t,x) = \chi_\eps(t,x) + \varphi^\eps(t,x)\;, \\
\end{equation}
where $\chi_\eps = G^{(d)}*\xi^\eps$ is the stochastic convolution of heat
kernel and noise, and $\varphi^\eps$ converges to a function (as opposed to a
distribution). In other words, the only singular term in the limit $\eps\to0$ is
given by $\lim_{\eps\to0}\chi_\eps=G^{(d)}*\xi$, which is independent of the
nonlinear term in the equation. The function $v^\eps(t,x)$ has the same
structure, as it can be represented in terms of $u^\eps$ by solving a linear
inhomogeneous equation. 


\subsection{More general cubic nonlinearities}
\label{ssec_rcubic}

We can now extend the above results to a more general class of systems, of the
form 
\begin{align}
\nonumber
\partial_t u &= \Delta_x u + F(u,v) + \xi\;, \\
\partial_t v &= a_1 u + a_2 v\;,
\label{eq:rcubic01} 
\end{align}
where $F$ is a cubic polynomial of the form
\begin{equation}
 \label{eq:rcubic02}
  F(u,v) = \alpha_1 u + \alpha_2 v + \beta_1 u^2 + \beta_2 uv + \beta_3 v^2
 + \gamma_1 u^3 + \gamma_2 u^2 v + \gamma_3 uv^2 + \gamma_4 v^3\;.
\end{equation} 
In that case, the renormalised equations take the form 
\begin{align}
\nonumber
\partial_t u^\eps &= \Delta_x u^\eps + \bigbrak{F(u^\eps,v^\eps) +
C_0(\eps) + C_1(\eps) u^\eps + C_2(\eps) v^\eps} +
\xi^\eps\;, \\
\partial_t v^\eps &= a_1 u^\eps + a_2 v^\eps\;, 
\label{eq:rcubic03} 
\end{align}
where $C_0(\eps), C_1(\eps), C_2(\eps) \in \R$, and we have the following
result:

\begin{theorem}[General cubic nonlinearities]
\label{thm:cubic}
Assume $u_0$ and $v_0$ satisfy the same assumptions as in Theorem~\ref{thm:FHN}.
Assume further that either $d=2$, or $d=3$ and $\gamma_2=0$. Then there exists a
choice of constants $C_0(\eps)$, $C_1(\eps)$ and $C_2(\eps)$ such that the
system~\eqref{eq:rcubic03} with initial condition $(u_0,v_0)$ admits a sequence
of local solutions $(u^\eps,v^\eps)$, converging in probability to a limit
$(u,v)$ as $\eps\to0$. This limit is independent of the choice of
mollifier~$\varrho$.  
\end{theorem}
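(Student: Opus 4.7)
The plan is to generalize the argument for Theorem~\ref{thm:FHN} by systematically tracking the new trees produced by the additional monomials in \eqref{eq:rcubic02}. Duhamel's principle transforms \eqref{eq:rcubic03} into the fixed-point system $u = G^{(d)}*[F(u,v) + C_0(\eps) + C_1(\eps)u + C_2(\eps)v + \xi^\eps]$ plus initial datum, together with $v = K*[a_1 u + a_2 v] + \e^{a_2 t}v_0$, where $K(t,x) = \e^{a_2 t}\indexfct{t\geqs 0}\delta(x)$ is the kernel of the linear ODE. Since $K$ does not regularise in space, I work in the extended regularity structure of Section~\ref{sec_extend}, which introduces an abstract integration operator $\cI_K$ together with suitable bounds. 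Because the $v$-equation is linear and autonomous in $(u,v)$, iterating it expresses the abstract variable $V$ as a finite geometric series in $\cI_K$ acting on $U$; substituting into $F(U,V)$ produces the minimal sector $\cT_F$ closed under the nonlinear map. I verify that in both $d=2$ and in $d=3$ (subject to $\gamma_2=0$ in the latter case), $\cT_F$ contains only finitely many symbols of negative homogeneity, so the abstract fixed-point argument of Section~\ref{sec_fix} applies.

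Next, I identify the counterterms needed to make the canonical model converge. The quadratic monomials $u^2$, $uv$, $v^2$ each yield a single ``lollipop'' tree whose divergent subtree is of the form $(\cI(\Xi))^2$, $\cI(\Xi)\cI_K(\cI(\Xi))$ or $(\cI_K(\cI(\Xi)))^2$; Wick subtraction produces a constant counterterm absorbable in $C_0(\eps)$. The cubic monomials $u^3$, $uv^2$, $v^3$ reproduce the structure of the $\Phi^4_d$ renormalisation: a ``tadpole'' counterterm linear in $u$ (respectively $v$), together with, in $d=3$, a logarithmic ``sunset'' counterterm also linear in $u$ or $v$. All of these fit into the ansatz $C_0+C_1u+C_2v$, and the corresponding stochastic bounds on the associated model components follow tree by tree as in Section~\ref{sec_renorm}, using the estimates for $\cI_K$ proven in Section~\ref{sec_extend}. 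Once the sector and counterterms are identified, the continuous solution map of Section~\ref{sec_fix} provides local solutions for each $\eps>0$, and the stochastic estimates on the renormalised model, combined with the Wick-type independence of the limit on the choice of mollifier, give convergence in probability of $(u^\eps,v^\eps)$ to a mollifier-independent limit.

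The main obstacle, and the reason why $d=3$ requires $\gamma_2=0$, comes from the trees generated by the $u^2v$ monomial. The most dangerous one is the analogue of the sunset, namely $\cI\bigl[(\cI(\Xi))^2\cI_K(\cI(\Xi))\bigr]\cdot(\cI(\Xi))^2$, whose logarithmic renormalisation in $d=3$ produces a counterterm proportional to $\cI_K(\cI(\Xi))$, whose spacetime realisation is proportional to $K*u$. Since the relation $v=a_1K*u+a_2K*v$ does not allow one to solve for $K*u$ as a linear combination of $1$, $u$ and $v$, such a counterterm cannot be absorbed into the ansatz $C_0+C_1u+C_2v$ of \eqref{eq:rcubic03}, and the renormalisation by constants and linear terms alone would fail. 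Excluding $\gamma_2$ eliminates precisely the class of trees in which this combination first appears, which is why the statement is restricted to $\gamma_2=0$ in dimension three. Apart from this algebraic obstruction, the remainder of the argument is a more careful bookkeeping version of the one used for Theorem~\ref{thm:FHN}, with the convergence and mollifier-independence of the limit following from the general machinery once the tree analysis above has been completed.
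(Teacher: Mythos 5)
Your high-level plan --- lift the system to the extended regularity structure of Section~\ref{sec_extend}, run the abstract fixed point of Section~\ref{sec_fix}, and renormalise as in Section~\ref{sec_renorm} --- matches the paper. However, two of the concrete claims you use are not correct, and your diagnosis of the $\gamma_2=0$ restriction points to the wrong mechanism. It is not true that each quadratic and cubic monomial produces a divergent counterterm. The whole point of Lemma~\ref{lem:KKQ} is that substituting a heat-kernel leg $K$ by the ODE kernel $\KQ$ improves integrability: $\cQ_1^\eps(0)$, $\cQ_2^\eps(0)$ and all $I_{ij}(\eps)$ with $(i,j)\neq(0,0)$ remain bounded as $\eps\to0$. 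Consequently the monomials $uv$, $v^2$, $uv^2$, $v^3$ require no counterterms at all, even in $d=3$ (the paper notes explicitly after~\eqref{eq:rcubic05} that the constants depend only on the coefficients of $u^2$, $u^2v$, $u^3$). In particular your ``dangerous sunset'' $\cI\bigl[\cI(\Xi)^2\cE(\cI(\Xi))\bigr]\cI(\Xi)^2$ is one of the convergent trees: the proof of Proposition~\ref{prop:convergence_renorm} shows $C_2(\RSWVo)=0$ is admissible, so no new logarithmic counterterm is created by inserting $\cE$.

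Even if such a counterterm proportional to $\cE(\cI(\Xi))=\RSoI$ did arise, it would not obstruct the ansatz. The symbol $\RSoI$ reconstructs to $\chiQ_\eps$, and since $v^\eps=\chiQ_\eps+\psi$ any such term is absorbed by the $C_2(\eps)\,v^\eps$ summand already present in~\eqref{eq:rcubic03}; indeed the proof of Proposition~\ref{prop:Fhat} produces exactly one such term, with coefficient $-(9\gamma_1 a_2+3\gamma_2\hat a_2)C_2$, and it causes no trouble. Your remark that $Q*u$ cannot be solved in terms of $1,u,v$ is also incorrect: the ODE gives $Q*u=v-\e^{tA_2}v_0$. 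The genuine obstruction (Proposition~\ref{prop:Fhat}) is algebraic and occurs at homogeneity zero: applying the renormalisation map $M$ to the part $F_{3;0}(U,V)$ of the expansion yields, when $\gamma_2\neq0$, a counterterm $-\gamma_2\,C_2\,\hat b_1\,\unit$, where $\hat b_1=\cQ(b_1)$ and $b_1=\beta_1+3\gamma_1\varphi+\gamma_2\psi$. The piece $\gamma_2\cQ(\psi)$ is a second time-convolution with $Q$ (essentially $Q*Q*u$), which is not a constant linear combination of $\unit$, $U$ and $V$, so the renormalised nonlinearity fails to close. When $d=2$, the homogeneity of $\RSWV$ is strictly positive and one can take $C_2=0$, which kills the problematic term and explains why no restriction on $\gamma_2$ is needed in two dimensions.
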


Note that in dimension $d=3$, we assume that $F$ contains no term of the form
$\gamma_2 u^2v$. This is because the method we use does not yield a simple form
for the renormalised equation if such a term is present (see
Section~\ref{ssec_renorm_compute}), which is an indirect consequence of the fact
that the equation for $v$ is not regularising in space
(cf.~Section~\ref{ssec_Schauder}). It is not clear at this stage whether this is
just a technical artefact, or whether it has a deeper meaning.

The renormalisation constants can again be computed explicitly to leading order.
They are given by 
\begin{align}
\nonumber
C_0(\eps) &= -\frac13 \beta_1 C(\eps)\;, \\
\label{eq:rcubic04} 
C_1(\eps) &= -\gamma_1 C(\eps)\;, \\
C_2(\eps) &= -\frac13 \gamma_2 C(\eps)\;,
\nonumber
\end{align}
where $C(\eps)$ is given by 
\begin{equation}
 \label{eq:rcubic05}
 C(\eps) = 3\int_{\R^4}G^{(3)}_\eps(z)^2\6z
 + 18\gamma_1 \int_{\R^4}G^{(3)}(z)\cQ^\eps_0(z)^2\6z
\end{equation}
for $d=3$ and by~\eqref{eq:FHN07} for $d=2$. Note in particular that these
constants depend only on the coefficients of $u^2$, $u^2 v$ and $u^3$ in $F$. 


\subsection{Vectorial gating variables}
\label{ssec_rvect}

Another generalisation of interest is to systems with vectorial variables $v$,
of the form 
\begin{align}
\nonumber
\partial_t u &= \Delta_x u + F(u,v) + \xi\;, \\
\partial_t v &= u A_1 + A_2 v\;,
\label{eq:rvect01} 
\end{align}
where $v(t,x)$ takes values in $\R^n$. Here $A_1\in\R^n$ is a constant vector,
and $A_2\in\R^{n\times n}$ is a square matrix, while $F$ is again a cubic
polynomial in $u$ and the $v_i$. This allows for instance to consider the Koper
model~\cite{Koper} with spatial diffusion and space-time white noise in the fast
variable; for the stochastic Koper model without the Laplacian representing
spatial diffusion see \cite{BerglundGentzKuehn1}. The reaction terms of one
version of the Koper model can be written in the form~\eqref{eq:rvect01} with 
\begin{equation}
 \label{eq:rvect02}
 F(u,v) = 3u + v_1 - u^3\;, 
 \qquad
 A_1 = 
 \begin{pmatrix}
 \epsilon_1k \\ 0 
 \end{pmatrix}\;, 
 \qquad
 A_2 = 
 \begin{pmatrix}
 -2\epsilon_1 & \epsilon_1 \\ \epsilon_1 & -\epsilon_1
 \end{pmatrix}\;,
\end{equation} 
where $k, \epsilon_1\in\R$ are model parameters. 

The natural candidate for the renormalised system associated to
\eqref{eq:rvect01} is given by 
\begin{align}
\nonumber
\partial_t u^\eps &= \Delta_x u^\eps + \biggbrak{F(u^\eps,v^\eps) +
C_0(\eps) + C_1(\eps) u^\eps + \sum_{i=1}^nC_{2,i}(\eps) v_i^\eps} +
\xi^\eps\;, \\
\partial_t v^\eps &= u^\eps A_1 + A_2 v^\eps\;, 
\label{eq:rvect03} 
\end{align}
for constants $C_0(\eps), C_1(\eps), C_{2,i}(\eps) \in\R$. 
Indeed, we have the following result.

\begin{theorem}[Vectorial variables $v$]
\label{thm:vectorial}
Assume $u_0$ and the components of $v_0$ satisfy the same assumptions as $u_0$
and $v_0$ in Theorem~\ref{thm:FHN}. Assume further that either $d=2$, or $d=3$
and $F(u,v)$ has no terms in $u^2v_i$. Then there exists a choice of constants
$C_0(\eps)$, $C_1(\eps)$, $C_{2,i}(\eps)$ such that the
system~\eqref{eq:rvect03} with initial condition $(u_0,v_0)$ admits a sequence
of local solutions $(u^\eps,v^\eps)$, converging in probability to a limit
$(u,v)$ as $\eps\to0$. This limit is independent of the choice of mollifier
$\varrho$.  
\end{theorem}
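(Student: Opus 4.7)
The plan is to reduce Theorem \ref{thm:vectorial} to the analysis already carried out for Theorem \ref{thm:cubic}, exploiting the fact that the evolution equation for $v$ is linear with constant coefficient matrix $A_2$. By Duhamel's principle,
\[
v(t,x) = \e^{t A_2} v_0(x) + \int_0^t \e^{(t-s)A_2} A_1 \, u(s,x) \6s,
\]
so each component $v_i$ is obtained from $u$ by convolution in the $t$-variable alone with a smooth kernel $K_i(t)$ that is a linear combination of terms $t^k \e^{\lambda t} \indexfct{t>0}$ with $\lambda$ an eigenvalue of $A_2$. These kernels share exactly the singular structure of the scalar ODE kernel used in the proofs of Theorems \ref{thm:FHN} and \ref{thm:cubic}: they are identically zero for $t<0$, smooth for $t>0$, and have only a jump discontinuity at $t=0$.

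First I would enlarge the regularity structure of Section \ref{sec_extend} by adjoining, for each index $i\in\set{1,\dots,n}$, an abstract integration symbol $\cI_i$ representing convolution against $K_i$. Since the singular behaviour of each $K_i$ is that of the scalar ODE kernel, the abstract integration bounds and Schauder-type estimates of Section \ref{sec_extend} extend to all $\cI_i$ simultaneously, and admissibility, extended model norms, and the action of the structure group generalise with no change beyond notation.

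Next, I would run the fixed-point argument of Section \ref{sec_fix} in the product modelled-distribution space associated with the enlarged structure, representing the mild form of \eqref{eq:rvect01} componentwise: each component $v_i$ is the image of the modelled distribution associated with $u$ under an appropriate linear combination of the operators $\cI_i$, plus a smooth contribution from the initial condition. Because $v_0 \in (\cC^\gamma)^n$ with $\gamma>1$, each such contribution is regular, and the contraction property on a small time interval is inherited from the scalar case.

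For the renormalisation step I would follow Section \ref{sec_renorm}, now for a general cubic polynomial in $(u,v_1,\dots,v_n)$. Each cubic monomial potentially generates divergent diagrams as $\eps\to 0$, but by the power-counting argument only finitely many combinations contribute logarithmic or linear divergences. The hypothesis that no $u^2 v_i$ term appears in $F$ when $d=3$ plays the same role as $\gamma_2=0$ in Theorem \ref{thm:cubic}: it ensures that the would-be counterterm arising from each such monomial collapses to a constant multiple of $v_i$, so the renormalised equation takes the form \eqref{eq:rvect03} with constants $C_0(\eps)$, $C_1(\eps)$, $C_{2,i}(\eps)\in\R$ whose leading singular parts are proportional to the coefficients of $u^2$, $u^3$, and $u^2 v_i$ in $F$, respectively. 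The main obstacle is combinatorial bookkeeping: one must enumerate all renormalisation diagrams in the presence of $n$ distinct singular-integration symbols $\cI_i$ and verify that each divergence can still be absorbed into one of the three types of counterterms above. Since no new kernel singularity is introduced, the convergence estimates of the renormalised model transfer from the scalar case essentially verbatim, yielding the claimed limit $(u,v)$ independent of the mollifier $\varrho$.
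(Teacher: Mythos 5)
Your proposal follows essentially the same route as the paper: introduce $n$ separate abstract integration symbols for the kernels $Q_i$ (the paper denotes these $\cE_i$, as in Remark~\ref{rem:E_vectorial}; note they are order-$0$ maps, not order-$2$, since $Q_i$ does not regularize in space), carry over the fixed-point argument componentwise, and observe that the only diagrams requiring renormalisation arise from the $\beta_1 u^2 + \gamma_1 u^3$ terms, all other cubic monomials contributing at least one $\cE_i$ so that the corresponding integrals stay bounded by Lemma~\ref{lem:KKQ}. One small inaccuracy: the hypothesis that $F$ has no $u^2 v_i$ term when $d=3$ is not there to make a counterterm ``collapse to a constant multiple of $v_i$''. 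If $\gamma_{2,i}\neq 0$, renormalisation via $C_2(\eps)$ generates terms proportional to $\gamma_{2,i}\,C_2\,\hat b_1$ involving the nonlocal operator $\cQ$ applied to the coefficient $b_1$; these admit no closed-form polynomial expression in $(u,v)$ at all, so the renormalised equation cannot be written in the form~\eqref{eq:rvect03}. The hypothesis removes those obstructing terms entirely (see the end of the proof of Proposition~\ref{prop:Fhat}).
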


The renormalisation constants can again be expressed in terms of the
coefficients of the initial equation~\eqref{eq:rvect01}. Writing 
\begin{equation}
 \label{eq:rvect04}
 F(u,v) = \alpha_1 u + \beta_1 u^2 + \gamma_1 u^3 
 + \sum_{i=1}^n \bigbrak{\alpha_{2,i}v_i + \beta_{2,i}uv_i +
\gamma_{2,i}u^2v_i} + R(u,v)
\end{equation} 
with $\abs{R(u,v)} \leqs C\norm{v}^2$, we have 
\begin{align}
\nonumber
C_0(\eps) &= -\frac13 \beta_1 C(\eps)\;, \\
\label{eq:rvect05} 
C_1(\eps) &= -\gamma_1 C(\eps)\;, \\
C_{2,i}(\eps) &= -\frac13 \gamma_{2,i} C(\eps)\;,
\qquad i=1,\dots, n\;, 
\nonumber
\end{align}
where $C(\eps)$ is again the constant defined in~\eqref{eq:rcubic05} for $d=3$
and in~\eqref{eq:FHN07} for $d=2$. Note that in the particular case of the Koper
model, $C_1(\eps)$ is equal to the constant $C(\eps)$ obtained for the
FitzHugh--Nagumo equation, while all other constants vanish. This is not
surprising, since both models have the same nonlinearity. 

\begin{remark}
\label{rem:subcritical}
The notion of \emph{local subcriticality} given
in~\cite[Assumption~8.3]{Hairer2014} suggests that the class of renormalisable
SPDE--ODE models of the form~\eqref{eq:intro03} is larger than the one
considered here. More precisely, in dimension $d=3$, one would expect models
with quartic $F$ and linear $G$ to be renormalisable, as well as models with
$F$ quartic in $u$ and quadratic in $v$ and $G$ quadratic in $u$ and linear in
$v$. The reason we did not include them in our analysis
is a technical one: quartic $F$ would produce solutions whose singular component
is not homogeneous in space and time, as in~\eqref{eq:FHN08}. The way in which
we lift the singular kernel to the regularity structure is not able to deal with
such situations  (see Section~\ref{ssec_Schauder} below). We plan to further
investigate this issue in future work.~$\lozenge$
\end{remark}



\section{Regularity structure for the Allen--Cahn equation}
\label{sec_rs}

This section serves the double purpose of giving a very brief account of the
theory of regularity structures contained in~\cite{Hairer2014}, and of
describing a regularity structure for the Allen--Cahn equation
\begin{equation}
 \label{eq:Allen-Cahn}
 \partial_t u = \Delta_x u + u - u^3 + \xi\;,
\end{equation} 
where $\Delta_x=\sum_{j=1}^d \partial_{x_j}^2$ is the usual Laplacian,
$\xi=\xi(t,x)$ denotes space-time white noise on $\R\times\T^d$ where $\T^d$
denotes the $d$-dimensional torus with $d=2$ or $d=3$, and we seek a
solution $u:[0,T]\times\T^d\to\R$ for a given initial condition $u_0=u(0,x)$ in
a suitable function space. The regularity structure for \eqref{eq:Allen-Cahn}
will serve as our starting point to build a larger structure allowing to
represent the FitzHugh--Nagumo equation.

By Duhamel's principle, one possible solution concept for~\eqref{eq:Allen-Cahn}
is to consider the following integral equation
\begin{equation}
 \label{eq:AC-fpe}
 u_t = \int_0^t S(t-s) \bigbrak{u_s - u_s^3 + \xi_s}\6s + S(t)u_0\;,
\end{equation} 
where $S(t)=\e^{t\Delta_x}$ denotes the semigroup of the heat equation
compatible with the boundary conditions and $u_s:=u(s,\cdot)$,
$\xi_s:=\xi(s,\cdot)$. The purpose of a regularity structure is to provide an
abstract space in which one can construct a fixed point of~\eqref{eq:AC-fpe}
when $\xi$ is replaced by a mollified version $\xi^\eps$. Then the idea is to
take the limit $\eps\to0$, and to project the solution to a distribution on
$[0,T]\times\T^d$, where \lq\lq distribution\rq\rq\ is understood in the sense
of a \lq\lq generalized function\rq\rq\ representing a sample path.


\subsection{Regularity structures}
\label{ssec_rs}

\begin{definition}[{\cite[Def.~2.1]{Hairer2014}}]
\label{def:regularity_structure}
A \emph{regularity structure} is a triple $(A,T,\cG)$ consisting of 
\begin{enumH}
\item[(R1)] an \emph{index set} $A\subset\R$, containing $0$, which is bounded
from
below and locally finite;
\item[(R2)] a \emph{model space} $T$, which is a graded vector space 
$T=\bigoplus_{\alpha\in A}T_\alpha$, where each $T_\alpha$ is a Banach space;
the
space $T_0$ is isomorphic to $\R$ and its unit is denoted $\unit$;
\item[(R3)] a \emph{structure group} $\cG$ of linear operators
acting on $T$, such
that 
\begin{equation}
 \label{eq:rs01}
 \Gamma\tau - \tau \in \bigoplus_{\beta<\alpha} T_\beta =: T_\alpha^- 
\end{equation} 
holds for every $\Gamma\in \cG$, every $\alpha\in A$ and every $\tau\in
T_\alpha$;
furthermore, $\Gamma\unit = \unit$ for every $\Gamma\in \cG$. 
\end{enumH}
\end{definition}

\begin{example}
\label{ex:rs_poly}
A simple but important example of regularity structure is the polynomial
regularity structure for $d$ variables. In that case, $A=\N_0$ is the set of
non-negative integers. For $\ell\in \N_0$, $T_\ell$ is the space of homogeneous
polynomials in $d$ variables of degree $\ell$. It is spanned by the monomials
$X^k = X_1^{k_1}\dots X_d^{k_d}$ for which $\abs{k} = k_1+\dots+k_d = \ell$. 
Finally, the structure group $\cG$ is defined by 
\begin{equation}
 \label{eq:rs02}
 \Gamma_h(X^k) = (X-h)^k\;, 
 \qquad h\in\R^d\;.
\end{equation}   
This group is isomorphic to $\R^d$, and one easily sees that $\Gamma_h$
satisfies the requirement~\eqref{eq:rs01}. The interpretation of $\cG$ is that
it allows to convert a Taylor expansion around a point $x\in\R^d$ into the
expansion around another point $x+h.~\blacklozenge$ 
\end{example}

We will henceforth denote by $\Tbar$ the polynomial regularity structure with
$d+1$ variables $X_0,\dots,X_d$, where $X_0$ represents the time variable. Since
the linear part of the Allen--Cahn equation is given by a parabolic operator, it
turns out to be useful to make the time variable \lq\lq count double\rq\rq. This
is done by defining the \emph{parabolic scaling}
\begin{equation}
 \label{eq:rs03}
 \fraks = (2,1,\dots,1) \in \N^{d+1}\;,
\end{equation} 
acting on $\R^{d+1}$ and declaring that a monomial $X^k$ has homogeneity
$\abss{X^k} = \abss{k}$ where the \emph{scaled degree} is defined by
$\abss{k} = 2k_0 + \sum_{i=1}^dk_i$. 

The regularity structure of the Allen--Cahn equation~\eqref{eq:rs01} in $\R^d$
is built by enlarging $\Tbar$, i.e., by adding new symbols other than the 
polynomial symbols $X^k$ to $\Tbar$ (see~\cite{Hairer_Weber_15}). The noise
is represented by a symbol $\Xi$. In order to account for the fact that
space-time white noise has H\"older regularity $\alpha$ for any $\alpha <
-(d+2)/2$, we set 
\begin{equation}
 \label{eq:rs04}
 \alpha_0 = -\frac{d+2}{2} - \kappa
\end{equation}
where $\kappa>0$ will be chosen sufficiently small in the sequel, and declare
that $\Xi$ has homogeneity $\abss{\Xi} = \alpha_0$. 

The set of symbols is equipped by a product, which by definition is commutative
and associative with unit $\unit$. The product of two elements $\tau,\sigma \in
T$ has homogeneity $\abss{\tau\sigma} := \abss{\tau} + \abss{\sigma}$.
Furthermore, integration against the heat kernel is represented by a map
$\cI:T\rightarrow T$, which by definition satisfies $\abss{\cI(\tau)} :=
\abss{\tau} + 2$, in order to account for the regularizing effect of the heat
kernel. 

\begin{table}
\begin{center}
\begin{tabular}{|l|c|l|l|l|l|}
\hline
\hlinespace
$\tau$ & Symb & $\abss{\tau}$ & $d=3$ & $d=2$ & $\Delta(\tau)$ \\
\hline
\hlinespace
$\Xi$   & $\Xi$   & $\alpha_0$ & $-\frac52-\kappa$ & \!\!$-2-\kappa$ & 
$\Xi\otimes\unit$ \\
\hlinespace
$\cI(\Xi)^3$ & \RSW & $3\alpha_0+6$ & $-\frac32 -3\kappa$ & $0-3\kappa$ &
$\RSW\otimes\unit$ \\
\hlinespace
$\cI(\Xi)^2$ & \RSV & $2\alpha_0+4$ & $-1-2\kappa$ & $0-2\kappa$ &
$\RSV\otimes\unit$ \\
\hlinespace
$\cI(\cI(\Xi)^3)\cI(\Xi)^2$ & \RSWW & $5\alpha_0+12$ & $-\frac12 -5\kappa$  &
$2-5\kappa$ & $\RSWW\otimes\unit + \RSV\otimes\cJ(\RSW)$ \\
\hlinespace
$\cI(\Xi)$   & \RSI & $\alpha_0+2$          & $-\frac12 -\kappa$  & $0-\kappa$ &
$\RSI\otimes\unit$ \\
\hlinespace
$\cI(\cI(\Xi)^3)\cI(\Xi)$ & \RSVW   & $4\alpha_0+10$ & $0-4\kappa$ & $2-4\kappa$
&
$\RSVW\otimes\unit + \RSI\otimes\cJ(\RSW)$ \\
\hlinespace
$\cI(\cI(\Xi)^2)\cI(\Xi)^2$ & \RSWV   & $4\alpha_0+10$ & $0-4\kappa$ &
$2-4\kappa$ & $\RSWV\otimes\unit + \RSV\otimes\cJ(\RSV)$
\\
\hlinespace 
$\cI(\Xi)^2 X_i$ & \RSV$X_i$ & $2\alpha_0+5$ & $0-2\kappa$ & $1-2\kappa$ &
$\RSV\otimes X_i + \RSV X_i \otimes \unit$ \\
\hlinespace
$\unit$ & $\unit$ & $0$ & $0$ & $0$ & $\unit\otimes\unit$ \\
\hlinespace 
$\cI(\cI(\Xi)^3)$ & \RSIW & $3\alpha_0+8$ & $\frac12 - 3\kappa$ &
$2-3\kappa$ & $\RSIW \otimes \unit + \unit \otimes \cJ(\RSW)$ \\
\hlinespace 
$\cI(\cI(\Xi)^2)\cI(\Xi)$ & \RSVV & $3\alpha_0+8$ & $\frac12 - 3\kappa$ &
$2-3\kappa$ & $\RSVV\otimes\unit + \RSI\otimes\cJ(\RSV)$ \\
\hlinespace 
$\cI(\cI(\Xi))\cI(\Xi)^2$ & \RSWI & $3\alpha_0+8$ & $\frac12 - 3\kappa$ &
$2-3\kappa$ & $\RSWI\otimes\unit + \RSV\otimes\cJ(\RSI)+$ \\
\hlinespace
&&&&& $\RSV X_i\otimes\cJ_i(\RSI) + \RSV\otimes X_i\cJ_i(\RSI)$\!\!
\\
\hlinespace
$\cI(\cI(\Xi)^2)$ & \RSY & $2\alpha_0+6$ & $1-2\kappa$ & $2-2\kappa$ &
$\RSY\otimes\unit + \unit\otimes\cJ(\RSV)$ \\
\hlinespace
$\cI(\cI(\Xi))\cI(\Xi)$ & \RSVI & $2\alpha_0+6$ & $1-2\kappa$ & $2-2\kappa$ &
$\RSVI\otimes\unit + \RSI\otimes\cJ(\RSI)+$\\
\hlinespace
&&&&& $\RSI X_i\otimes\cJ_i(\RSI) + \RSI\otimes X_i\cJ_i(\RSI)$ \\
\hlinespace 
$X_i$ & $X_i$ & $1$ & $1$ & $1$ & $X_i\otimes\unit + \unit\otimes X_i$ \\
\hlinespace
$\cI(\cI(\Xi))$ & \RSII & $\alpha_0+4$ & $\frac32-\kappa$ & $2-\kappa$ & 
$\RSII\otimes\unit + \unit\otimes\cJ(\RSI) + $ \\
\hlinespace
&&&&& $X_i\otimes\cJ_i(\RSI) + \unit\otimes X_i\cJ_i(\RSI)$ \\
\hline
\end{tabular}
\end{center}
\vspace{2mm}
\caption[]{Elements of $\cF_F$ of lowest homogeneity for the Allen--Cahn
equation. They have been ordered by increasing homogeneity for the
case $d=3$. The expressions for $\Delta(\tau)$ are shown for the case $d=3$.
We have written $\cJ$ instead of $\cJ_0$ and $\cJ_i$ instead of $\cJ_{e_i}$,
where the $e_i$ are canonical basis vectors of $\Z_+^{d+1}$. Summation over the
index $i$ is understood.}
\label{tab:FF_AllenCahn}
\end{table}

Let $\cF$ be the set containing all possible products of symbols
$\unit, \Xi,
X_i$ and their images by $\cI$, and denote by $\cH = \vspan(\cF)$ the vector
space spanned by all these symbols. This is an infinite-dimensional space but
in practice only a finite-dimensional subspace of $\cH$ will be needed. 
Let $\cU$ be the smallest set containing $\unit, X_i$ and $\cI(\Xi)$, and
such that 
\begin{equation}
 \label{eq:rs05}
 \tau_1, \tau_2, \tau_3 \in\cU 
 \quad \Rightarrow \quad
 \cI(\tau_1\tau_2\tau_3) \in \cU\;. 
\end{equation}
We then set 
\begin{equation}
 \label{eq:rs06}
 \cF_F = \set{\Xi} \cup \setsuch{\tau_1\tau_2\tau_3}{\tau_i\in\cU}\;,
\end{equation}
and define the model space as being the vector space $T=\vspan(\cF_F)$
spanned by $\cF_F$. The index set is then defined as 
\begin{equation}
 \label{eq:rs07}
 A = \bigsetsuch{\abss{\tau}}{\tau\in\cF_F}\;.
\end{equation}
The model space $T$ admits a natural grading 
\begin{equation}
 \label{eq:rs08}
 T = \bigoplus_{\gamma\in A} T_\gamma
\end{equation}
obtained by letting $T_\gamma$ be the vector space spanned by elements of
homogeneity $\gamma$. We equip each $T_\gamma$ with a norm
$\norm{\cdot}_\gamma$; the choice of norm is irrelevant
since~\cite[Lemma~8.10]{Hairer2014} shows the (nontrivial) fact that all
$T_\gamma$ are finite-dimensional. 

Table~\ref{tab:FF_AllenCahn} shows the elements in $\cF_F$ of lowest
homogeneity, using a graphical representation introduced by Hairer. Each symbol 
$\Xi$ is denoted by a dot, and integration with respect to $\cI$ is denoted by
a vertical line pointing downwards. For instance, $\cI(\Xi)=\RSI$.
Multiplication of symbols is represented by joining them at the base, so for
instance $\cI(\Xi)^2 = \RSV$.

Finally, we have to extend the definition of the structure group $\cG$, which is
the most involved part of the construction. The first step is to introduce a
set $\cF_+ \subset \cF$, which contains all formal expressions of the type 
\begin{equation}
 \label{eq:rs09}
 X^k \prod_{j} \cJ_{k_j}\tau_j\;,
\end{equation}
where $\tau_j\in\cF$ and the multiindices $k_j$ are such that
$\abss{\tau_j}+2-\abss{k_j}>0$. By definition, an expression of the form
\eqref{eq:rs09} has homogeneity $\abss{k} + \sum_j(\abss{\tau_j}+2-\abss{k_j})$,
which is always strictly positive, except for the element $\unit$, which has
homogeneity $0$. The set $\cF_F^+\subset\cF_+$ is defined similarly, but with
$\tau_j\in\cF_F$. We set $\cH_+=\vspan(\cF_+)$ and $T^+=\vspan(\cF_F^+)$. 

The last ingredient is given by a bilinear map 
$\Delta:\cH\to\cH\otimes\cH_+$ defined by setting 
\begin{equation}
 \label{eq:rs10}
 \Delta\unit = \unit\otimes\unit\;, \qquad 
 \Delta X_i = X_i\otimes\unit + \unit\otimes X_i\;, \qquad 
 \Delta\Xi = \Xi\otimes\unit\;,
\end{equation} 
and extended inductively to all of $\cH$ by the rules 
\begin{align}
\nonumber
\Delta(\tau\sigma) &= (\Delta\tau)(\Delta\sigma)\;, \\
\Delta(\cI\tau) &= (\cI\otimes \Id)\Delta(\tau) 
+ \sum_{\ell, m} \frac{X^\ell}{\ell!} \otimes \frac{X^m}{m!} \cJ_{\ell+m}\tau\;,
\label{eq:rs11} 
\end{align}
where $\textnormal{Id}$ denotes the identity map and we impose that 
\begin{equation}
\label{eq:Jiszero}
\cJ_k\tau=0\qquad \text{if $\abss{k}\geqs \abss{\tau}+2$\;,}
\end{equation}
while $\cJ_k\tau$ is a new formal symbol otherwise. Table~\ref{tab:FF_AllenCahn}
also shows $\Delta(\tau)$ for the first few elements of $\cF_F$. 

Denote by $\cH_+^\star$ the dual of $\cH_+$, that is, the set of linear maps
$h:\cH_+\to\R$, that we will write as $\tau\mapsto\pscal{h}{\tau}$. Let
$\cG_\star$ be the set of grouplike elements $g\in\cH_+^\star$, that is, those
satisfying $\pscal{g}{\tau\sigma} = \pscal{g}{\tau} \pscal{g}{\sigma}$ for all
$\tau,\sigma\in\cH_+$. Then $\cG$ is obtained by identifying elements of
$\cG_\star$ acting the same way on $T^+$. The action of $\cG$ on $T$ is defined
by 
\begin{equation}
\label{eq:def_g} 
 (g,\tau) \mapsto \Gamma_g\tau = (\Id\otimes g)\Delta\tau\;.
\end{equation}
The fact that $(A,T,\cG)$ constructed in this way is indeed a regularity
structure is proved in~\cite[Thm.~8.24]{Hairer2014}. 


\subsection{Models and reconstruction theorem}
\label{ssec_model}

A regularity structure can be used to represent an SPDE as a fixed-point problem
in an abstract space. One also has to provide a connection between the abstract
space and the \lq\lq physical\rq\rq\ space the solution lives in. 
First, we need to introduce a version of the classical H\"older spaces
$\cC^\alpha$ on $\R^{d+1}$. In order to take the parabolic scaling into account,
the Euclidean norm is replaced by 
\begin{equation}
\label{eq:parabolic_norm} 
\norm{(t,x)}_{\fraks} = \abs{t}^{1/2} + \sum_{i=1}^d \abs{x_i}\;.
\end{equation}
Denote by $\pscal{\unit}{\cdot}$ the element of the dual of $T$ defined by
$\pscal{\unit}{\unit} = 1$ and $\pscal{\unit}{\tau} = 0$ for all $\tau\in
\bigoplus_{\gamma\neq0}T_\gamma$. If $\alpha>0$, the space $\cC^\alpha_\fraks$
is defined as the set of all
functions $\varphi:\R^{d+1} \rightarrow \R$ such that there exists a function
$\hat{\varphi}: \R^{d+1}\rightarrow T_\alpha^-$ such that $\pscal{\unit}
{\hat{\varphi}(z)}=\varphi(z)$ for every $z=(t,x)\in\R^{d+1}$ and such that for
every compact set $\fraK\subset \R^{d+1}$ the estimate
\begin{equation}
\abs{\hat{\varphi}(z+h)-\Gamma_h\hat{\varphi}(z)}_\beta \lesssim
\abss{h}^{\alpha-\beta}
\end{equation}  
holds uniformly over $\beta<\alpha$, $\abss{h}\leqs 1$ and $z\in \fraK$
(cf~\cite[Def.~2.14]{Hairer2014}). Lemma~2.12 in~\cite{Hairer2014} shows
that in the case of the Euclidean scaling $\fraks=(1,\dots,1)$, this
definition coincides with the usual definition of H\"older spaces.

If $\alpha<0$, $\cC^\alpha_\fraks$ can be defined in a natural way as a subset
of Schwarz distributions $\cS'(\R^{d+1})$ as follows. Given
$r\in\N$, denote by $\cB^r_{\fraks,0}$ the space of functions of
class $\cC^r$ which are supported in the set
$\setsuch{z\in\R^{d+1}}{\norm{z}_{\fraks}\leqs1}$.

\begin{definition}[{\cite[Def.~3.7]{Hairer2014}}]
\label{def:C_alpha}
Assume $\alpha < 0$ and let $r=-\intpart{\alpha}$. A Schwarz distribution
$\upsilon\in\cS'(\R^{d+1})$ belongs to $\cC^\alpha_{\fraks}$ if it
belongs to
the dual of $\cC_0^r$ and for every compact set $\fraK\subset\R^{d+1}$,
there exists a constant $C$ such that 
\begin{equation}
 \label{eq:C_alpha}
 \pscal{\upsilon}{\cS^\delta_{\fraks,z}\eta} \leqs C\delta^\alpha
\end{equation} 
holds for all $\eta\in\cB^r_{\fraks,0}$ with $\norm{\eta}_{\cC^r}\leqs1$,
all $\delta\in(0,1]$ and all $z\in\fraK$. Here 
\begin{equation}
 \label{eq:Sdelta}
 (\cS^\delta_{\fraks,(t,x)}\eta)(\bar t,\bar x) :=
\delta^{-(d+2)}\eta\bigpar{\delta^{-2}(\bar t-t),\delta^{-1}(\bar x - x)}\;.
\end{equation} 
\end{definition}

Next we introduce the notion of a \emph{model} associated with a regularity
structure $(A,T,\cG)$. 

\begin{definition}[{\cite[Def.~2.17]{Hairer2014}}]
\label{def:model} 
A \defwd{model} for a regularity structure $(A,T,\cG)$ with scaling~$\fraks$ is
a pair $(\Pi,\Gamma)$, defined by a collection
$\set{\Pi_z:T\to\cS'(\R^{d+1})}_{z\in\R^{d+1}}$ of continuous linear maps and a
map $\Gamma:\R^{d+1}\times\R^{d+1}\to \cG$ with the following properties. 
\begin{enumH}
\item[(M1)] 	$\Gamma_{zz} = \Id$ is the identity of
$\cG$ and
$\Gamma_{zz'}\Gamma_{z'z''}=\Gamma_{zz''}$ for
all $z,z',z''\in\R^{d+1}$. 

\item[(M2)] 	$\Pi_{z'}=\Pi_z \Gamma_{zz'}$ for all $z,z'\in\R^{d+1}$. 

\item[(M3)] 	For any $\gamma\in\R$ and any compact set
$\fraK\subset{\R^{d+1}}$, one has 
\begin{equation}
 \label{eq:defmod1}
 \norm{\Pi}_{\gamma;\fraK} := 
 \sup_{z\in\fraK}~ \sup_{\alpha<\gamma} ~\sup_{\tau\in T_\alpha}~
\sup_{\eta\in\cB^r_{\fraks,0}}~\sup_{0<\delta\leqs1}
 \frac{\bigabs{\pscal{\Pi_z\tau}{\cS^\delta_{\fraks,z}\eta}}}{\delta^{
\alpha} \norm{\tau}} 
 < \infty\;.
\end{equation}

\item[(M4)] 	For any $\gamma\in\R$ and any compact set
$\fraK\subset{\R^{d+1}}$, one has 
\begin{equation}
 \label{eq:defmod2}
 \norm{\Gamma}_{\gamma;\fraK} := 
 \sup_{z,z'\in\fraK} ~\sup_{\alpha<\gamma}~
\sup_{\beta<\alpha}~\sup_{\tau\in T_\alpha}
 \frac{\norm{\Gamma_{zz'}\tau}_\beta}
 {\norm{\tau}\norm{z-z'}^{\alpha-\beta}_{\fraks}} < \infty\;.
\end{equation} 
\end{enumH}
\end{definition}

In the case of the Allen--Cahn equation with mollified noise $\xi^\eps:=\xi *
\varrho_\eps$, where $\varrho(t,x)$ is a mollifier and
$\varrho_\eps(t,x)=\eps^{-(d+2)}\varrho(t/\eps^2,x/\eps)$, a canonical way of
building a model $Z^\eps=(\Pi^\eps,\Gamma^\eps)$ proceeds as follows.
First, we can just define 
\begin{align}
\nonumber
(\Pi^\eps_z\Xi)(\bar z) &= \xi^\eps(\bar z)\;, \\
\nonumber
(\Pi^\eps_z X^k)(\bar z) &= (\bar z-z)^k\;, \\
\label{eq:Pi_x} 
(\Pi^\eps_z \tau\bar\tau)(\bar z) &= (\Pi^\eps_z \tau)(\bar z)(\Pi^\eps_z
\bar\tau)(\bar z)
\qquad\forall\tau,\bar\tau\in T\;. 
\end{align}
To also include the integration map in the fixed point problem let $G=G(t,x)$ be the heat kernel 
defined as the fundamental solution associated to $\partial_t u = \Delta_x u$. We split the heat
kernel $G$ as 
\begin{equation}
 \label{eq:splitG}
 G =R + K\;,
\end{equation}
where $K:(\R^{d+1}\setminus\{0\})\rightarrow \R$ is a singular part satisfying specific algebraic properties, while
$R:\R^{d+1}\rightarrow \R$ is a smooth part. The properties of $K$ are: 
\begin{itemizz}
\item 	$K$ is supported in $\set{\abs{x}^2 + \abs{t} \leqs 1}$ where 
$\abs{x}=\sum_{j=1}^d\abs{x_j}$;
\item 	$K(t,x)=0$ for $t\leqs0$ and $K(t,-x)=K(t,x)$ for all $(t,x)$;
\item we have	
\begin{equation}
 \label{eq:K01}
 K(t,x) = \frac{1}{\abs{4\pi t}^{d/2}} \e^{-\norm{x}^2/4t}
 \qquad
 \text{for $\abs{x}^2+\abs{t} \leqs \frac12$}
\end{equation}
and $K(t,x)$ is smooth for $\abs{x}^2+\abs{t} > \frac12$;
\item furthermore, one has vanishing moments
\begin{equation}
 \label{eq:K02}
 \int_{\R^{d+1}} K(t,x)P(t,x) \6x\6t = 0
\end{equation}
for all polynomials $P$ of parabolic degree less or equal some
fixed $\zeta\geqs2$.
\end{itemizz}

Lemma~5.5 in~\cite{Hairer2014} shows that such a splitting indeed exists and 
also the vanishing moments condition holds for $K$; both parts $K,R$ then
satisfy a number of derivative bounds. Furthermore, it follows from Lemma~5.5
in~\cite{Hairer2014} that there exists a decomposition
\begin{equation}
\label{eq:Kdecomp}
K(z-z')=\sum_{n\geqs 0} K_n(z-z')
\end{equation}
where $n\in\N_0$ and suitable derivative bounds and vanishing moment conditions hold for 
each of the kernels $K_n$. With this construction, the abstract integration 
map is represented by the Taylor-series-like expression
\begin{equation}
 \label{eq:Pi_xI} 
(\Pi^\eps_z \cI\tau)(\bar z) = \int K(\bar z-\bar z')(\Pi^\eps_z \tau)(\6\bar
z') 
+ \sum_\ell \frac{(\bar z-z)^\ell}{\ell!} \pscal{f^\eps_z}{\cJ_\ell\tau}\;,
\end{equation} 
where the linear forms $f^\eps_z\in \mathcal{G}\subset\cH_+^\star$
are constructed as follows:
\begin{align}
\nonumber
\pscal{f^\eps_z}{\unit} &= 1\;, \\
\nonumber
\pscal{f^\eps_z}{X_i} &= -z_i\;, \\
\pscal{f^\eps_z}{\tau\bar\tau} &= \pscal{f^\eps_z}{\tau}
\pscal{f^\eps_z}{\bar\tau} 
\nonumber
\qquad \forall\tau,\bar\tau\in T\;, \\
\pscal{f^\eps_z}{\cJ_\ell\tau} &= - \int D^\ell K(z-\bar z)
(\Pi^\eps_z\tau)(\6\bar
z)\;.
\label{eq:f_x} 
\end{align}
Note that in the more general kernel case $K=K(z,\bar z)$ one has to replace
$D^\ell$ by $D^\ell_1$ and the subscript indicates derivative with respect to
the first argument. The second term in the definition~\eqref{eq:Pi_xI} of
$\Pi^\eps_z \cI\tau$ may seem somewhat strange, and will be motivated in the
next section. Note that it should really be interpreted by applying both sides
to test functions, i.e., for all smooth compactly supported functions $\psi$ and
all $\tau \in T_\alpha$ one requires
\begin{equation}
\pscal{\Pi^\eps_z \cI\tau}{\psi} =\sum_{n\geqs 0}\int_{\R^{d+1}}\psi(z') 
\pscal{\Pi^\eps_z \tau}{K_{n;zz'}^\alpha}\6 z'  
\end{equation}
where $K_{n;zz'}^\alpha$ is defined via
\begin{equation}
K_{n;zz'}^\alpha(z'')=K_n(z'-z'')-\sum_{\abss{k}<\alpha+2}
\frac{(z'-z)^k}{k!}D^k K_n(z-z'')\;.
\end{equation}
The group elements $\Gamma^\eps_{z\bar z}$ are then simply defined by 
\begin{equation}
 \label{eq:Gamma_F}
 \Gamma^\eps_{z\bar z} = (F^\eps_z)^{-1} F^\eps_{\bar z}
 \qquad 
 \text{where $F^\eps_z = \Gamma_{f^\eps_z}$}
\end{equation} 
(recall that $F^\eps_z$ is invertible because $f^\eps_z$ is grouplike). In this
way, the algebraic property (M1) of Definition~\ref{def:model} is automatically
satisfied. Regarding the algebraic property (M2), we can note the following
relations. Using Sweedler's notation~\cite{Sweedler69}, we write $\Delta\tau =
\tau^{(1)}\otimes\tau^{(2)}$, although $\Delta\tau$ is usually a sum of such
terms. Let further $\gamma^\eps_{z\bar z}$ denote the element of the structure
group such that $\Gamma^\eps_{z\bar z} = \Gamma^\eps_{\gamma^\eps_{z\bar z}}$.
Then we have by~\eqref{eq:def_g}
\begin{equation}
 \label{eq:Gamma_F01}
 \Pi^\eps_z \Gamma^\eps_{z\bar z}\tau = \Pi^\eps_z (\Id\otimes\gamma^\eps_{z\bar
z})\Delta\tau
 = \Pi^\eps_z\tau^{(1)} \pscal{\gamma^\eps_{z\bar z}}{\tau^{(2)}}\;.
\end{equation} 
Property (M2) thus amounts to the relation
\begin{equation}
 \label{eq:Gamma_F02}
 \Pi^\eps_{\bar z}\tau = \Pi^\eps_z\tau^{(1)} \pscal{\gamma^\eps_{z\bar
z}}{\tau^{(2)}}\;,
\end{equation} 
which provides some intuition for the meaning of $\Delta$. 
It is easy to check that this relation holds for elements $\tau$ of the
polynomial regularity structure. In the general case, the fact that
$Z^\eps=(\Pi^\eps,\Gamma^\eps)$ is indeed a model is a nontrivial
fact, proved in~\cite[Prop.~8.27]{Hairer2014}. 

\begin{remark}
\label{rem:model_canonical}
It is very important to realise that the canonical model just built is not the
only possible model for a given regularity structure. This freedom in the
choice of model will be used when introducing the renormalisation procedure.
All models, however, will share many properties with the canonical model. The
only rule that will be modified is the product rule $\Pi^\eps_z(\tau\bar\tau) =
\Pi^\eps_z(\tau)\Pi^\eps_z(\bar \tau)$. 
\end{remark}

We can now introduce the spaces $\cD^\gamma$, which play an analogous role as
the $\cC^\alpha_\fraks$ on the level of the regularity structure. 

\begin{definition}[{\cite[Def.~3.1]{Hairer2014}}]
\label{def:D_gamma}
Let $\gamma\in\R$. Given a model $Z=(\Pi,\Gamma)$, the space $\cD^\gamma =
\cD^\gamma(Z)$ consists of all functions $f:\R^{d+1}\to T_\gamma^-$ such
that for every compact set $\fraK\subset\R^{d+1}$ one has 
\begin{equation}
 \label{eq:D_gamma}
 \normDgamma{f}_{\gamma;\fraK} := 
 \sup_{z\in\fraK}~ \sup_{\beta < \gamma} \norm{f(z)}_\beta 
 + \sup_{\substack{z,\bar z\in\fraK\\ 
 \norm{z-\bar z}_{\fraks}\leqs1}}
 \sup_{\beta<\gamma}\frac{\norm{f(z)-\Gamma_{z\bar z}f(\bar z)}_\beta}
 {\norm{z-\bar z}^{\gamma-\beta}_{\fraks}} < \infty\;.
\end{equation}
\end{definition}

In the particular case of the polynomial regularity structure, it is again quite
straightforward to check that the requirement~\eqref{eq:D_gamma} is equivalent
to $f$ being the Taylor expansion of an element of the H\"older space
$\cC^\gamma_\fraks$. The spaces $\cD^\gamma$ depend on the model via $\Gamma$,
but not on $\Pi$, cf~\cite[Remark~3.4]{Hairer2014}. In order to compare
elements
of $\cD^\gamma$ for different models, it is useful to introduce
\begin{equation}
 \label{eq:D_gamma_seminorm}
 \seminormff{f}{\bar f}_{\gamma;\fraK}
 = \norm{f-\bar f}_{\gamma;\fraK} 
 + \sup_{\substack{z,\bar z\in\fraK \\ \norm{z-\bar z}_{\fraks}\leqs1}}
 \sup_{\beta < \gamma}
 \frac{\norm{f(z) - \bar f(z) - \Gamma_{z\bar z}f(\bar z) +
\Gammabar_{z\bar z}\bar f(\bar z)}_\beta}{\norm{z-\bar
z}_{\fraks}^{\gamma-\beta}}
\end{equation}
(which is in general \emph{not} a function of $f-\bar f$), where 
\begin{equation}
 \label{eq:D_gamma_normf}
 \norm{f-\bar f}_{\gamma;\fraK} 
 = \sup_{z\in\fraK}~ \sup_{\beta < \gamma}
 ~\norm{f(z) - \bar f(z)}_\beta\;.
\end{equation}

The central result allowing to link elements in $\cD^\gamma$ and in
$\cC^\alpha_\fraks$ is the \emph{reconstruction
theorem}~\cite[Thm~3.10]{Hairer2014}. It states that if $\alpha_*=\min A$ and
given $r>\abs{\alpha_*}$ there exists, for any $\gamma\in\R$, a 
continuous linear map $\cR:\cD^\gamma\to\cC^{\alpha_*}_{\fraks}$ such
that 
\begin{equation}
 \label{eq:R01}
 \bigabs{\pscal{\cR f-\Pi_zf(z)}{\cS^\delta_{\fraks,z}\eta}} 
 \leqs C \delta^\gamma \norm{\Pi}_{\gamma;\bar{\fraK}}
 \normDgamma{f}_{\gamma;\bar{\fraK}}
\end{equation} 
holds uniformly over all test functions $\eta\in\cB^r_{\fraks,0}$, all
$\delta\in(0,1]$, all $f\in\cD^\gamma$  and all $z\in\fraK$. The constant $C>0$
depends only on $\gamma$ and on the regularity structure, and the set
$\bar{\fraK}$ is the $1$-fattening of $\fraK$ (i.e., the points at distance at
most $1$ from $\fraK$). If $\gamma>0$, then~\eqref{eq:R01} defines $\cR$
uniquely. The reconstruction theorem also provides bounds on the dependence of
$\cR$ on the model $(\Pi,\Gamma)$. Heuristically,~\eqref{eq:R01} states that
$\cR f$ locally looks like $\Pi_zf(z)$ near any point $z\in\R^{d+1}$, up to
terms of order $\gamma$. 


\subsection{Lifting the convolution maps}
\label{ssec_convolution}

We can now lift the operation of convolution with the heat kernel to the space
$\cD^\gamma(Z)$. The requirement~\eqref{eq:K02} is for compatibility with
the condition that $\cI$ should define an abstract integration map of order $2$
in the sense of~\cite[Def.~5.7]{Hairer2014}, namely  
\begin{itemizz}
 \item 	$\cI: T_\alpha \to T_{\alpha+2}$ for every $\alpha\in A$;
 \item 	$\cI\tau = 0$ for every $\tau$ in $\overline{T}$;
 \item 	$\cI\,\Gamma\tau - \Gamma\cI\tau\in\overline{T}$ for every
$\tau\in T$ and every $\Gamma\in \cG$. 
\end{itemizz}

The central result is given by the so-called \emph{multilevel Schauder
estimates}~\cite[Thm~5.12]{Hairer2014}, which state in particular that for all
$\gamma\in\R$ such that $\gamma+2\not\in\N$, there exists a map $\cK_\gamma:
\cD^\gamma \to \cD^{\gamma+2}$ such that 
\begin{equation}
 \label{eq:K_gamma1}
 \cR\cK_\gamma f = K*\cR f
\end{equation} 
holds for all $f\in\cD^\gamma$. In other words, the following diagram commutes:

\begin{center}
\begin{tikzpicture}[>=stealth']
\matrix (m) [matrix of math nodes, row sep=3em,
column sep=3.5em, text height=1.75ex, text depth=0.5ex]
{ \cD^\gamma & \cD^{\gamma+2} \\
 \cC^{\alpha_*}_{\fraks} & \cC^{\alpha_*}_{\fraks} \\};
\path[->]
(m-1-1) edge node[auto] {$\cK_\gamma$} (m-1-2)
(m-1-1) edge node[left] {$\cR$} (m-2-1)
(m-1-2) edge node[auto] {$\cR$} (m-2-2)
(m-2-1) edge node[below] {$K*$} (m-2-2);
\end{tikzpicture}
\end{center}

The map $\cK_\gamma$ has the following expression. For any $f\in\cD^\gamma$, 
\begin{equation}
 \label{eq:K_gamma2}
 (\cK_\gamma f)(z) = \cI f(z) + \cJ(z)f(z) + (\cN_\gamma f)(z)\;,
\end{equation} 
where for each $\tau\in T_\alpha$, 
\begin{align}
\label{eq:K_gamma3}  
\cJ(z)\tau &= \sum_{\abss{k} < \alpha+2}
\frac{X^k}{k!} \int_{\R^{d+1}} D^k K(z-\bar z)(\Pi_z\tau)(\6\bar z)\;, \\
\label{eq:K_gamma4}  
(\cN_\gamma f)(z) &= \sum_{\abss{k} < \gamma+2}
\frac{X^k}{k!} \int_{\R^{d+1}} D^k K(z-\bar z)(\cR f - \Pi_z f(z))(\6\bar z)\;.
\end{align}
Note that the last two operators have values in $\overline T$, the polynomial
part of the regularity structure, and that the only nonlocal operator is
$\cN_\gamma$. 

The r\^ole of the operators $\cJ$ and $\cN_\gamma$ is to ensure that
$\cK_\gamma f$ has the properties required to belong to $\cD^{\gamma+2}$, which
would not be the case if one simply sets $\cK_\gamma f = \cI f$. The maps $\cJ$
are related to the coefficients $\cJ_{k_j}$ appearing in~\eqref{eq:rs09}; in
fact, the $\cJ_{k_j}\tau_j$ play the r\^ole of placeholders for the
$\cJ(z)\tau$. 

As for the smooth part $R$ of the heat kernel, it can be lifted as
in~\cite[(7.7)]{Hairer2014}. Namely, with a smooth kernel $R$ we associate
maps $R_\gamma: \cC^{\alpha_*}_{\fraks} \to \cD^\gamma$ given by 
\begin{equation}
 \label{eq:R_gamma}
 (R_\gamma \upsilon)(z) 
 = \sum_{\abss{k}<\gamma} \frac{X^k}{k!} 
 \int_{\R^{d+1}} D^k R(z-\bar z) \upsilon(\bar z)\6\bar z
 := \sum_{\abss{k}<\gamma} \frac{X^k}{k!} 
 \pscal{\upsilon}{D^k R(z-\cdot)}\;.
\end{equation} 
It follows from~\cite[Prop~3.28]{Hairer2014} that for
$\upsilon\in\cC^{\alpha_*}_{\fraks}$ one has 
\[
 (\cR R_\gamma \upsilon)(z) = \pscal{\upsilon}{R(z-\cdot)}\;,
\]
and thus 
\begin{equation}
 \label{eq:R_gamma2}
 \cR R_\gamma \cR f = R * \cR f\;.
\end{equation} 
In other words, the following diagram commutes:

\begin{center}
\begin{tikzpicture}[>=stealth']
\matrix (m) [matrix of math nodes, row sep=3em,
column sep=3.5em, text height=1.75ex, text depth=0.5ex]
{ \cD^\gamma & \cD^{\gamma} \\
 \cC^{\alpha_*}_{\fraks} & \cC^{\alpha_*}_{\fraks} \\};
\path[->]
(m-1-1) edge node[auto] {$R_\gamma\cR$} (m-1-2)
(m-1-1) edge node[left] {$\cR$} (m-2-1)
(m-1-2) edge node[auto] {$\cR$} (m-2-2)
(m-2-1) edge node[below] {$R*$} (m-2-2)
(m-2-1) edge node[above left] {$R_\gamma$} (m-1-2);
\end{tikzpicture}
\end{center}


Assume that for an appropriate choice of $\gamma$, one can find an element
$U\in\cD^\gamma(Z^\eps)$ satisfying the fixed-point equation 
\begin{equation}
 \label{eq:AC-fpe01}
 U = (\cK_{\bar\gamma} + R_\gamma\cR) \Rplus (\Xi + U - U^3) + Gu_0\;,
\end{equation} 
for some $\bar\gamma\geqs\gamma-2$, where $\Rplus(t,x)=\indexfct{t>0}$ and
$Gu_0$ denotes a suitable lift of the convolution in space of heat kernel
and initial condition.
Applying the reconstruction operator $\cR$ to both sides of this equation, one
can show that $u=\cR U$ satisfies 
\begin{equation}
 \label{eq:AC-fpe02}
 u = (K+R)*(\Rplus[u-u^3+\xi^\eps]) + Gu_0\;,
\end{equation} 
which is equivalent to~\eqref{eq:AC-fpe}; in \eqref{eq:AC-fpe02} we also use
$Gu_0$ as a notation for the usual convolution in space of heat kernel and
initial condition. This is basically the strategy implemented in Theorem~7.8 and
Section~9.4 of~\cite{Hairer2014}, except that one has to deal with two
additional technical difficulties. The first one is that due to the singular
behaviour of the heat kernel as time goes to $0$, the definition of the space
$\cD^\gamma$ has to be modified. We will apply this modification to our case in
Section~\ref{ssec_D_gamma_eta}. The second difficulty is that although a fixed
point exists for every mollification parameter $\eps>0$, it cannot converge as
$\eps\to0$. This is why a renormalisation procedure is needed, which we will
adapt to our case in Section~\ref{sec_renorm}. 


\section{Extension of the regularity structure}
\label{sec_extend}

Our aim is now to extend the regularity structure built for the Allen--Cahn
equation, in order to allow to represent a family of coupled SPDEs--ODEs of the
form 
\begin{align}
\nonumber
\partial_t u &= \Delta_x u + F(u,v) + \xi^\eps\;, \\
\partial_t v &=  u A_1 + A_2 v\;,
\label{eq:fhn01} 
\end{align} 
where $F(u,v)$ is a cubic polynomial, and $A_1\in \R^n$, $A_2\in \R^{n\times n}$
are either scalars as for the classical FitzHugh--Nagumo case with $n=1$, or a
vector and a matrix if $v$ has multiple components. Duhamel's formula allows us
to represent (mild) solutions of~\eqref{eq:fhn01} as 
\begin{align}
\nonumber
u_t &= \int_0^t S(t-s) \bigbrak{\xi^\eps_s + F(u_s,v_s)} \6s + S(t)u_0\;, \\
v_t &= \int_0^t u_s Q(t-s)  \6s + \e^{tA_2}v_0\;, 
\label{eq:fhn02} 
\end{align}
where $Q(t) := \e^{t A_2}A_1$. We thus have to lift to the regularity
structure the operation of time-integration with respect to $Q$, which has no
smoothing effect in space. 

In the case where $v$ has values in $\R^n$ with $n>1$, the kernel $Q(t)$ is in
fact a vector of dimension $n$. In what follows, we will mainly deal with the
case of scalar $Q$, since the generalisation to the vectorial case is rather
straightforward. 


\subsection{Extension theorem}
\label{ssec_ext}

Let us fix a finite time horizon $T$. Then we can always assume that the
kernel $Q$ satisfies the following properties:
\begin{itemizz}
\item 	$Q(t)$ is supported on $[0,2T]$ and smooth for $t>0$;
\item 	$Q(t) = \e^{tA_2}A_1$ for all $t\in[0,T]$.
\end{itemizz}
The reason why this is allowed is that we will be interested in showing
existence of solutions on a sufficiently small interval $[0,T]$, so that the
behaviour of $Q$ outside this interval is not going to matter. Also, since
$Q$ is bounded, it will not be necessary to decompose it as a sum of $Q_n$
concentrated in sets of radius $2^{-n}$, as in the case of the heat kernel
in~\cite[Section~5]{Hairer2014}, cf~\eqref{eq:Kdecomp}.

In order to be able to represent the time-integration map, we will have to
extend our regularity structure and the associated model. We do this by adding
to $T$ new symbols denoted $\cE(\tau)$, $\tau\in T\setminus\overline{T}$, which
we represent by an open blue dot. Thus for instance we write $\cE(\cI(\Xi)) =
\cE(\RSI) = \RSoI$. In practice, we will only need to apply $\cE$ to elements
$\tau$ of homogeneity $\abss{\tau} \in (-2,0)$. We thus set $V =
\setsuch{\tau\in T}{-2<\abss{\tau}<0}$ ($V$ is called a \emph{sector} of $T$). 
If $\tau\not\in V$, we simply set $\cE(\tau)=0$. We also postulate that for
$\tau\in V$, $\cE(\tau)$ has the same homogeneity as $\tau$, and we choose the
norm on the vector space generated by the new symbols in such a way that 
\begin{equation}
 \label{eq:E_norm}
 \norm{\cE(\tau)}_\alpha = \norm{\tau}_\alpha
 \qquad 
 \forall \alpha\in V\;.
\end{equation}
The operator $\cE$ defined in this way is an abstract integration map of order
$0$ on $V$ in the sense of~\cite[Def.~5.7]{Hairer2014}, i.e.,
\begin{itemizz}
 \item 	$\cE: V\cap T_\alpha \to T_\alpha$ for every $\alpha\in A$;
 \item 	$\cE\tau = 0$ for every $\tau \in V\cap\overline{T}$;
 \item 	$\cE\Gamma\tau - \Gamma\cE\tau\in\overline{T}$ for every
$\tau\in V$ and every $\Gamma\in \cG$. 
\end{itemizz} 
The first two properties are obvious by definition of $\cE\tau$, in particular
we have $V\cap\overline{T} = \emptyset$. Regarding the
third property, we first extend the structure group by setting 
\begin{equation}
 \label{eq:ext05}
 \Delta(\cE\tau) = (\cE\otimes\Id)\Delta\tau\;.
\end{equation} 
The third required property of $\cE$ is indeed satisfied since 
\begin{equation}
 \label{eq:GammaE}
 \Gamma_g\cE\tau = (\Id\otimes g)\Delta(\cE\tau) 
 = (\cE\otimes g)\Delta\tau
 = \cE\Gamma_g\tau\;,
\end{equation} 
so that in fact $\Gamma_g\cE\tau - \cE\Gamma_g\tau = 0$ for all $\tau\in T$ and
all $g\in\cG$. 

Since $\cE$ is not regularity-increasing, there can be in principle infinitely
many symbols of given homogeneity. The trick, however, will be to build the
fixed-point map in such a way that only finitely many new symbols are needed. 
In practice, it will turn out that the only required new symbols are $\RSoI$,
and those obtained by applying $\cI$ to existing symbols and/or multiplying
them. However, since it is of independent interest, we are going to describe the
extension procedure in a more abstract, inductive way. Given a subset $W\subset
V$, on which the canonical model $Z^\eps=(\Pi^\eps,\Gamma^\eps)$ is defined, we
want to extend the model to a larger set
$\widehat{W}=W\cup\setsuch{\cE(\tau)}{\tau\in W}$. We can then apply the usual
extension theorem~\cite[Thm~5.14]{Hairer2014} to extend the model to
$\widehat{W}\cup\setsuch{\cI(\tau)}{\tau\in \widehat{W}}$, and so on as often as
needed. 

The inductive step from $W$ to $\widehat{W}$ goes as follows. Assume that
$Z^\eps$ already satisfies Definition~\ref{def:model} on a regularity structure
$(A,W,\cG)$. We define the extended model by setting  
\begin{equation}
\label{eq:E01} 
 (\Pi^\eps_{t,x}\cE\tau)(\bar t,\bar x) = 
  \int_{\bar t-2T}^{\bar t} Q(\bar t-s) (\Pi^\eps_{t,x}\tau)(s,\bar x)\6s
\end{equation} 
for all $\tau\in W$. 
Writing as before $F^\eps_z = \Gamma_{f^\eps_z}$, the new group elements are
defined by setting
\begin{equation}
 \label{eq:E03}
 \Gamma^\eps_{zz'}(\cE\tau) = (F^\eps_z)^{-1} F^\eps_{z'}(\cE\tau)
\end{equation} 
for all $z,z'\in\R^{d+1}$.

\begin{remark}
\label{rem:JQ} 
An alternative would be to define $\cE$ on the sector $V = \setsuch{\tau\in
T}{-2<\abss{\tau}<2}$ by the expression
\begin{equation}
\label{eq:E01a} 
 (\Pi^\eps_{t,x}\cE\tau)(\bar t,\bar x) = 
 \begin{cases}
 \displaystyle\vrule height 5pt depth 18pt width 0pt
  \int_{\bar t-2T}^{\bar t} Q(\bar t-s) (\Pi^\eps_{t,x}\tau)(s,\bar x)\6s 
  &\text{if $\abss{\tau}<0$\;,} \\
  \displaystyle\vrule height 18pt depth 5pt width 0pt
  \int_{\bar t-2T}^{\bar t} Q(\bar t-s) (\Pi^\eps_{t,x}\tau)(s,\bar x)\6s
  + \pscal{f^\eps_{t,x}}{\cJ^Q\tau}\quad
  &\text{if $0\leqs\abss{\tau}<2$\;,}
 \end{cases}
\end{equation} 
where 
\begin{equation}
\label{eq:E02a} 
 \pscal{f^\eps_{t,x}}{\cJ^Q\tau} = 
 - \int_{t-2T}^t Q(t-s) (\Pi^\eps_{t,x}\tau)(s,x)\6s\;.
\end{equation} 
The new symbol $\cJ^Q$ is needed to ensure the property
$(\Pi^\eps_{t,x}\cE\tau)(t,x) = 0$ when $\abs{\tau}_\fraks\geqs0$, which is
necessary when lifting the fixed-point equation. 
While this defines an extended model with the required properties, the fact
that $Q$ acts by convolution in time only limits the regularity of its lift to
the regularity structure (cf.\ Remark~\ref{rem:JQhat} below). We will see in the
next subsection why it is sufficient to introduce new symbols $\cE\tau$ only
when $\abs{\tau}_\fraks < 0$.~$\lozenge$ 
\end{remark}

\begin{remark}
\label{rem:E_vectorial} 
In cases where $v(t,x)$ takes values in $\R^n$ and $Q(t)\in\R^n$, we should 
in fact introduce $n$ commuting symbols $\cE_1,\dots,\cE_n$, and define the
extended model by the relations 
\begin{equation}
\label{eq:E04a} 
 (\Pi^\eps_{t,x}\cE_i\tau)(\bar t,\bar x) = 
  \int_{\bar t-2T}^{\bar t} Q_i(\bar t-s) (\Pi^\eps_{t,x}\tau)(s,\bar x)\6s 
\end{equation} 
for $i=1,\dots n$. The results that follow remain true when $\cE$ and $Q$ are
replaced by $\cE_i$ and $Q_i$.~$\lozenge$ 
\end{remark}

We now have to check that $(\Pi^\eps,\Gamma^\eps)$ indeed defines a model on
$\widehat{W}$. The following lemma contains a technical estimate preparing
the proof of that fact. 

\begin{lemma}
\label{lem:E_extension}
Assume that for any compact set $\fraK\subset\R^{d+1}$ there exists a constant
$C_\fraK$ such that 
\begin{equation}
\label{eq:E_05} 
 \abs{(\Pi^\eps_z\tau)(\bar z)} \leqs C_\fraK \norm{z-\bar
z}_\fraks^{\abss{\tau}} \norm{\tau}
\end{equation}
for all $\tau\in W$ and all $z,\bar z\in\fraK$. Then there exists a constant
$C_0$, depending only on $Q$, such that 
\begin{equation}
\label{eq:E_06} 
 \abs{(\Pi^\eps_z\cE\tau)(\bar z)} \leqs C_0C_{\fraKbar} \norm{z-\bar
z}_\fraks^{\abss{\tau}} \norm{\tau}
\end{equation}
holds for all $\tau\in W$ and all $z,\bar z\in\fraK$, where $\fraKbar =
\setsuch{(t,x)}{\exists \bar t\in\R\colon \abs{\bar t-t}\leqs 2T, (\bar
t,x)\in\fraK}$ (for brevity we shall call $\fraKbar$ the $2T$-fattening of
$\fraK$, although strictly speaking it is only a fattening in the time
direction). 
\end{lemma}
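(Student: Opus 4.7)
The plan is to use the definition \eqref{eq:E01} of $\Pi^\eps_z\cE\tau$, apply the hypothesis \eqref{eq:E_05} pointwise inside the time integral, and reduce the claim to a purely scalar estimate. The key geometric observation is that for $z = (t,x)\in\fraK$, $\bar z = (\bar t, \bar x)\in\fraK$, and $s \in [\bar t - 2T, \bar t]$, the point $(s, \bar x)$ belongs to the $2T$-fattening $\fraKbar$ while $z\in\fraK\subset\fraKbar$, so that \eqref{eq:E_05} on $\fraKbar$ yields
\[
\bigabs{(\Pi^\eps_z\tau)(s, \bar x)} \leqs C_\fraKbar \bigpar{\abs{t-s}^{1/2} + \abs{x - \bar x}}^{\alpha} \norm{\tau},
\]
with $\alpha = \abss{\tau} \in (-2,0)$. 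Bounding $\abs{Q(\bar t-s)}$ by $\norm{Q}_\infty$ and performing the change of variable $r = \bar t - s$ (using that $Q$ is supported on $[0,2T]$) then reduces the claim to the scalar estimate
\[
I(\eta,b) := \int_0^{2T} \bigpar{\abs{r - \eta}^{1/2} + b}^{\alpha} \6r \leqs C\bigpar{\abs{\eta}^{1/2} + b}^{\alpha},
\]
uniformly in $\eta = \bar t - t \in \R$ and $b = \abs{x - \bar x} \geqs 0$, with $C$ depending only on $T$.

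I would then establish this scalar estimate by case analysis. When $\eta < 0$ or $\eta > 4T$ the interval $[0,2T]$ avoids the singular point $r = \eta$ at distance at least $\abs{\eta}/2$, so that the integrand is pointwise bounded by a multiple of $(\abs{\eta}^{1/2}+b)^\alpha$ and multiplying by the length $2T$ suffices. For $0 \leqs \eta \leqs 4T$, the elementary pointwise bound $(\abs{r-\eta}^{1/2}+b)^\alpha \leqs b^\alpha \leqs 2^{\abs{\alpha}}(\eta^{1/2}+b)^\alpha$ handles the regime $b \geqs \eta^{1/2}$. In the remaining regime $b < \eta^{1/2}$ I would split the integration range by comparing $\abs{r - \eta}^{1/2}$ with $(\eta^{1/2}+b)/2$: on the complement of a neighbourhood of $r = \eta$ the integrand is again pointwise controlled, whereas on a neighbourhood of $r = \eta$ of length at most $(\eta^{1/2}+b)^2/2$ the successive substitutions $v = r - \eta$ and then $v = w^2$ reduce the integral to $\int (w+b)^{\alpha+1}\6w$, which converges since $\alpha + 2 > 0$ and produces a contribution of order $(\eta^{1/2}+b)^{\alpha+2}$.

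The main obstacle is that $\cE$ does not improve regularity: unlike the multilevel Schauder estimate of Section~\ref{ssec_convolution}, integration against $Q$ produces no spare power of the scale. Consequently the contribution $(\eta^{1/2}+b)^{\alpha+2}$ just obtained exceeds the target $(\eta^{1/2}+b)^\alpha$ by a factor $(\eta^{1/2}+b)^2$, which can only be absorbed because this critical subcase is relevant only when $b < \eta^{1/2} \leqs 2\sqrt{T}$, forcing $(\eta^{1/2}+b)^2 \leqs 16T$. It is this absorption that makes $C_0$ depend on $\norm{Q}_\infty$ and on the size $T$ of the support of $Q$, but crucially not on the choice of compact set~$\fraK$.
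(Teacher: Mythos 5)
Your proof is correct, and the overall strategy is the same as the paper's: insert the definition~\eqref{eq:E01}, apply the hypothesis~\eqref{eq:E_05} pointwise on the $2T$-fattened set (your geometric observation about $(s,\bar x)\in\fraKbar$ is exactly the point), and reduce to a scalar estimate for the time integral of $Q$ against a power of the parabolic norm. Where you diverge is in how that scalar estimate is established. The paper's treatment is considerably cruder: after using $(a+b)^\alpha\leqs a^\alpha+b^\alpha$ to split off the spatial part, it controls the time piece $\int_0^{2T}\abs{Q(\bar s)}\abs{t-\bar t+\bar s}^{\alpha/2}\6\bar s$ merely by observing that it is bounded above by a constant (since $\alpha>-2$) while the target $\abs{t-\bar t}^{\alpha/2}$ is bounded below by a positive constant when $\abs{t-\bar t}\lesssim 1$ (since $\alpha<0$). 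In effect everything is absorbed into constants valid on bounded sets, and the treatment of the $\abs{x-\bar x}^\alpha$ contribution is left implicit (one should really take the \emph{smaller} of the two upper bounds rather than their sum, and dispatch the spatial case trivially via $\norm{Q}_{L^1}$). Your argument proves the genuinely uniform pointwise bound $\int_0^{2T}(\abs{r-\eta}^{1/2}+b)^\alpha\,\6r\lesssim(\abs\eta^{1/2}+b)^\alpha$ for all $\eta,b$, via a clean three-way case analysis, and the absorption step in the critical regime $b<\eta^{1/2}\leqs 2\sqrt T$ is exactly where the convergence exponent $\alpha+2>0$ is used. This buys you a constant $C_0$ that visibly depends only on $T$ and $\norm Q_\infty$ and not on the diameter of $\fraK$ — which is what the lemma asserts and what the paper's compactness shortcut leaves slightly opaque — at the cost of a somewhat longer computation.
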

\begin{proof}
Using the definition~\eqref{eq:E01} of $\Pi^\eps_{t,x}\cE\tau$ and the
assumption~\eqref{eq:E_05}, we obtain 
\begin{align*}
 \abs{(\Pi^\eps_{t,x}\cE\tau)(\bar t,\bar x)}
 &\leqs C_{\fraKbar}\int_{\bar t-2T}^{\bar t} \abs{Q(\bar t-s)}
\norm{(t,x)-(s,\bar x)}^\alpha_\fraks \6s \, \norm{\tau} \\
 &\leqs C_{\fraKbar}\int_0^{2T} \abs{Q(\bar s)}\abs{t-\bar t+\bar s}^{\alpha/2}
\6\bar s\, \norm{\tau}
 + C_{\fraKbar} \int_0^{2T} \abs{Q(\bar s)}\,\6\bar s\,
\abs{x-\bar x}^\alpha \norm{\tau} \;,
\end{align*}
where $\alpha=\abss{\tau}$.
The required bound thus follows if we can show that 
\[
 \int_0^{2T} \abs{t-\bar t+\bar s}^{\alpha/2} \6\bar s 
 \lesssim \abs{t-\bar t\,}^{\alpha/2}
\]
holds for $\abs{t-\bar t\,}\lesssim 1$. By treating separately the cases $t>\bar
t$ and $t<\bar t$, one sees that the left-hand side is always bounded above by
a constant times $(\abs{t-\bar t\,}+2T)^{1+\alpha/2}$, which is bounded above
for $\alpha>-2$. Since on the other hand, the right-hand side is bounded below
by a positive constant for $\alpha<0$, the result follows. 
\end{proof}

Note that~\cite[Prop.~8.27]{Hairer2014} shows in particular that the
assumption~\eqref{eq:E_05} is satisfied by any canonical model for mollified
noise $\Pi^\eps$ built as in Section~\ref{ssec_model}. 

We can now state the main result of this subsection, which is an adaptation of
the extension theorem~\cite[Thm.~5.14]{Hairer2014} and of
\cite[Prop.~8.27]{Hairer2014} to our degenerate situation. 

\begin{prop}[Extension theorem for $\cE$]
\label{prop_extension_E} 
Let $Z^\eps=(\Pi^\eps,\Gamma^\eps)$ be a model for the regularity structure
$(A,W,\cG)$, where $W\subset V$, and such that $\bar z\mapsto\Pi^\eps_z\tau(\bar
z)$ is continuous and satisfies~\eqref{eq:E_05} for any $\eps>0$. Let $\widehat
W=W\cup\setsuch{\cE\tau}{\tau\in W}$. Then $\Zhat^\eps =
(\Pihat^\eps,\Gammahat^\eps)$ obtained by extending $Z^\eps$ in the above way is
a model for $(A,\widehat W,\cG)$, which satisfies~\eqref{eq:E_06} for any
$\eps>0$. Furthermore, 
\begin{equation}
 \label{eq:E_08}
 \norm{\Gammahat^\eps}_{\gamma;\fraK} = \norm{\Gamma^\eps}_{\gamma;\fraK}
\end{equation} 
holds for any $\gamma\in\R$ and any compact $\fraK\subset\R^{d+1}$. 
\end{prop}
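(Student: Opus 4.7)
The plan is to verify the four axioms (M1)--(M4) of Definition~\ref{def:model} for the extended pair $\Zhat^\eps=(\Pihat^\eps,\Gammahat^\eps)$ on $(A,\widehat{W},\cG)$, together with the pointwise bound \eqref{eq:E_06}. Since \eqref{eq:E_06} has already been established in Lemma~\ref{lem:E_extension}, and since the group elements $\Gammahat^\eps_{zz'}=(F^\eps_z)^{-1}F^\eps_{z'}$ are built from the unchanged $f^\eps_z$, axiom (M1) is immediate. The remaining points are treated in order of increasing difficulty.

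First, for (M4) and the equality \eqref{eq:E_08}, I would exploit the algebraic identity \eqref{eq:GammaE}, which says that the action of any $\Gamma_g\in\cG$ commutes with~$\cE$: $\Gammahat^\eps_{zz'}\cE\tau=\cE\Gamma^\eps_{zz'}\tau$. Since $\cE$ preserves both homogeneity and the chosen norms by \eqref{eq:E_norm} (and since for $\tau\in W\subset V$ all terms appearing in $\Gamma^\eps_{zz'}\tau-\tau$ have strictly lower, still negative homogeneity, so the convention $\cE\sigma=0$ for $\sigma\notin V$ creates no inconsistency in the relevant cases), one obtains
\[
\norm{\Gammahat^\eps_{zz'}\cE\tau}_\beta=\norm{\cE\Gamma^\eps_{zz'}\tau}_\beta=\norm{\Gamma^\eps_{zz'}\tau}_\beta\;,
\]
which yields \eqref{eq:E_08} directly from \eqref{eq:defmod2}.

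Next, for (M2), I would combine the same commutation identity with the defining formula \eqref{eq:E01} and with (M2) applied to the original model $Z^\eps$. Concretely, using $\Pi^\eps_{z'}\tau=\Pi^\eps_z\Gamma^\eps_{zz'}\tau$ inside the time integral,
\[
(\Pihat^\eps_{z'}\cE\tau)(\bar t,\bar x)
=\int_{\bar t-2T}^{\bar t}Q(\bar t-s)(\Pi^\eps_z\Gamma^\eps_{zz'}\tau)(s,\bar x)\6s
=(\Pihat^\eps_z\cE\Gamma^\eps_{zz'}\tau)(\bar t,\bar x)
=(\Pihat^\eps_z\Gammahat^\eps_{zz'}\cE\tau)(\bar t,\bar x)\;,
\]
by linearity of the integral and the commutation relation.

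The remaining and main task is (M3). The input is the pointwise bound from Lemma~\ref{lem:E_extension}: on any compact $\fraK$, $\abs{(\Pihat^\eps_z\cE\tau)(\bar z)}\lesssim\norm{z-\bar z}_\fraks^{\abss{\tau}}\norm{\tau}$ uniformly for $z\in\fraK$ and $\bar z\in\fraKbar$. Testing against $\cS^\delta_{\fraks,z}\eta$, which is supported in a parabolic ball of radius $\delta$ and bounded in $L^\infty$ by $\delta^{-(d+2)}\norm{\eta}_{\cC^r}$, gives
\[
\bigabs{\pscal{\Pihat^\eps_z\cE\tau}{\cS^\delta_{\fraks,z}\eta}}
\lesssim\delta^{-(d+2)}\int_{\norm{z-\bar z}_\fraks\leqs\delta}\norm{z-\bar z}_\fraks^{\abss{\tau}}\6\bar z\,\norm{\tau}
\lesssim\delta^{\abss{\tau}}\norm{\tau}\;,
\]
where the last step uses $\abss{\tau}>-2>-(d+2)$ to guarantee integrability of $\norm{z-\bar z}_\fraks^{\abss{\tau}}$ in parabolic polar coordinates. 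Since this matches $\delta^{\abss{\cE\tau}}\norm{\cE\tau}$ by \eqref{eq:E_norm}, the bound \eqref{eq:defmod1} for $\norm{\Pihat^\eps}_{\gamma;\fraK}$ on new symbols follows; combined with the assumed bound on $\Pi^\eps$ restricted to $W$, this completes (M3). The principal obstacle throughout is the upstream estimate of Lemma~\ref{lem:E_extension}, which rests on the constraint $\abss{\tau}>-2$ to prevent the non-smoothing kernel~$Q$ from worsening the singularity at $z$; it is precisely this constraint that dictates the restriction of $\cE$ to the sector~$V$.
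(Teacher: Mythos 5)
Your proposal is correct and tracks the paper's overall plan (verify (M1)--(M4) using Lemma~\ref{lem:E_extension} for the analytic input and the commutation $\Gamma_g\cE=\cE\Gamma_g$ from~\eqref{eq:GammaE} for the group part). The one genuine departure is in (M2). The paper proves (M2) indirectly: it extends the base-point-independent map $\bPi^\eps$ (recalled from Hairer, Sec.~8.3) to the new symbols via $(\bPi^\eps\cE\tau)(\bar t,\bar x)=\int_{\bar t-2T}^{\bar t}Q(\bar t-s)(\bPi^\eps\tau)(s,\bar x)\,\6s$ and then verifies the factorisation $\Pi^\eps_z\cE\tau=\bPi^\eps F^\eps_z\cE\tau$. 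You instead verify (M2) head-on by inserting the original model's identity $\Pi^\eps_{z'}\tau=\Pi^\eps_z\Gamma^\eps_{zz'}\tau$ inside the time integral and then using linearity plus the commutation with $\cE$. Both arguments are valid; yours is more elementary and needs no appeal to the $\bPi^\eps$-machinery, while the paper's has the side benefit of exhibiting explicitly how $\bPi^\eps$ extends to $\widehat W$. One small imprecision to tighten: in your parenthetical remark you note only that the terms of $\Gamma^\eps_{zz'}\tau-\tau$ have ``strictly lower, still negative homogeneity'', which by itself does not exclude homogeneity $\leqs-2$ (outside $V$), in which case $\cE$ would annihilate them and both your step~(M2) and the \emph{equality} (as opposed to just ``$\leqs$'') in~\eqref{eq:E_08} would fail. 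What saves the argument is that $W\subset V$ is the model space of a regularity structure and is therefore $\cG$-invariant, so every component of $\Gamma^\eps_{zz'}\tau$ in fact lies in $W\subset V$; this should be stated explicitly (the paper makes the same implicit assumption). Your (M3) computation is essentially the paper's, written slightly more explicitly with the observation $\abss{\tau}>-2>-(d+2)$ ensuring integrability near the diagonal.
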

\begin{proof}
We have to prove that $\Zhat^\eps$ satisfies the assumptions (M1)--(M4) of
Definition~\ref{def:model}. Property (M1) is automatically satisfied owing 
to~\eqref{eq:E03}. Regarding property (M2), it is known
\cite[Sec.~8.3]{Hairer2014} that there exists a linear map
$\bPi^\eps:T\rightarrow \cS'(\R^{d+1})$ such that
$\bPi^\eps\tau=\Pi^\eps_z(F^\eps_z)^{-1}\tau$ is independent of $z$ for any
$\tau\in T$. Property (M2) holds if we can find for any $\tau\in W$ a
distribution $\bPi^\eps(\cE\tau)$, independent of $(t,x)$, such that 
\[
 \Pi^\eps_{t,x}(\cE\tau) = \bPi^\eps F^\eps_{t,x}(\cE\tau) 
 \qquad \forall z=(t,x)\in \R^{d+1}\;.
\]
To achieve this, we simply define $\bPi^\eps$ on the extended structure by  
\[
 (\bPi^\eps\cE\tau)(\bar t,\bar x) 
 = \int_{\bar t-2T}^{\bar t} Q(\bar t-s) (\bPi^\eps\tau)(s,\bar x)\6s\;. 
\]
Indeed, we have (writing as usual $\Delta\tau=\tau^{(1)}\otimes\tau^{(2)}$) 
\begin{align*}
(\bPi^\eps F^\eps_z\cE\tau)(\bar t,\bar x) 
&= \bPi^\eps (\cE\otimes f^\eps_z)\Delta\tau
(\bar t,\bar x) \\
&= (\bPi^\eps\cE\tau^{(1)})(\bar t,\bar x) \pscal{f^\eps_z}{\tau^{(2)}} \\
&= \int_{\bar t-2T}^{\bar t} Q(\bar t-s) 
(\bPi^\eps\tau^{(1)})(s,\bar x)\6s \, \pscal{f^\eps_z}{\tau^{(2)}} \\
&= \int_{\bar t-2T}^{\bar t} Q(\bar t-s) (\Pi^\eps_z\tau)(s,\bar x)\6s \\ 
&= (\Pi^\eps_z\cE\tau)(\bar t,\bar x)\;.
\end{align*}
To obtain the fourth line, we have used the fact that $\Pi^\eps_z = \bPi^\eps
F^\eps_z$ holds for the original model, and thus 
\[
 (\Pi^\eps_z\tau)(s,\bar x) 
 = (\bPi^\eps_zF^\eps_z\tau)(s,\bar x)
 = (\bPi^\eps_z(\Id\otimes f^\eps_z)\Delta\tau)(s,\bar x)
 = (\bPi^\eps\tau^{(1)})(s,\bar x) \pscal{f^\eps_z}{\tau^{(2)}}\;.
\]
Property (M3) is a direct consequence of~\eqref{eq:E_06}. Indeed, this bound
implies that for any localised scaled test function $\eta$ of integral $1$, 
\begin{align*}
\bigabs{\pscal{\Pi^\eps_{t,x}\cE\tau}{\eta^\delta_{t,x}}}
&= \biggabs{\iint (\Pi^\eps_{t,x}\cE\tau)(\bar t,\bar x)
\frac{1}{\delta^{d+2}}
\eta\biggpar{\frac{\bar t-t}{\delta^2},\frac{\bar x-x}{\delta}}
\6\bar t\6\bar x} \\
&\leqs \iint \bigabs{(\Pi^\eps_{t,x}\cE\tau)(t+\delta^2s,x+\delta y)}
\bigabs{\eta(s,y)}\6s\6y \\
&\lesssim \delta^{\abss{\tau}} \norm{\tau}
\end{align*}
as required by~\eqref{eq:defmod1}. 
Finally, in order to prove Property (M4), we recall from~\eqref{eq:GammaE} that 
\[
\Gamma^\eps_{z\bar z}\cE\tau 
= \cE\Gamma^\eps_{z\bar z}\tau\;.
\]
If $\abs{\tau}_\fraks\geqs 0$, there is nothing to prove. If $\beta< \alpha =
\abs{\tau}_\fraks < 0$, then 
\[
 \norm{\Gamma^\eps_{z\bar z}\cE\tau}_\beta 
 = \norm{\cE\Gamma^\eps_{z\bar z}\tau}_\beta 
 = \norm{\Gamma^\eps_{z\bar z}\tau}_\beta
 \leqs \norm{\Gamma^\eps}_{\gamma;\fraK}\norm{\tau}\norm{z-\bar
z}^{\alpha-\beta}_\fraks
\]
for all $z, \bar z\in\fraK$. This completes the proof that $\Zhat^\eps$ is a
model, and also proves~\eqref{eq:E_08}. 
\end{proof}


\subsection{Multilevel Schauder estimates}
\label{ssec_Schauder}

We now would like to construct an operator $\cK^Q_\gamma: \cD^\gamma \to
\cD^\gamma$ which lifts the operation of integration against $Q$ to the
regularity structure. Formally, this means that we should have 
\begin{equation}
 \label{eq:fix_t03}
 \cR\cK^Q_\gamma f = K_v * \cR f\;,
 \qquad
 K_v(t,x,\bar t,\bar x) = Q(t-\bar t\,)\delta(x-\bar x) 
\end{equation} 
for all $f\in\cD^\gamma$. In fact, this should really be interpreted as 
\begin{equation}
 \label{eq:fix_t03a}
 (\cR\cK^Q_\gamma f)(t,x) = \int_{t-2T}^t Q(t-\bar t\,) (\cR f)(\bar t,x)
\6\bar t\;, 
\end{equation} 
or in terms of test functions  
\begin{equation}
 \label{eq:fix_t03b}
 \pscal{\cR\cK^Q_\gamma f}{\psi} = \pscal{\cR f}{\hat\psi}\;,
\end{equation} 
where $\hat\psi$ is defined by 
\begin{equation}
 \label{eq:ext04}
 \hat\psi(\bar t,y) := \int_\R Q(s-\bar t\,) \psi(s,y)\6s
= \int_{\bar t\,}^{\bar t+2T} Q(s-\bar t) \psi(s,y)\6s\;.
\end{equation} 
The problem with such a plan is that the kernel $K_v$ being singular in space,
we cannot apply an expansion as in~\eqref{eq:K_gamma3} and~\eqref{eq:K_gamma4}.
However, for the class
of equations we are interested in, we do not actually need to define
$\cK^Q_\gamma$ on \emph{all} of $\cD^\gamma$. It will be quite sufficient to
define it on a subset of $\cD^\gamma$, which is given by the functions whose
components with negative homogeneity do not depend on $(t,x)$. This motivates
the following definition.

\begin{definition}
\label{def_Dgamma0}
Let $Z=(\Pi,\Gamma)$ be a model. 
The space $\cD^\gamma_0(Z)=\cD^\gamma_0(\Gamma)$
is the space of functions $f \in \cD^\gamma(\Gamma)$ of the form 
\begin{equation}
 \label{eq:def_Dgamma0}
 f(z) = \sum_{\tau\in T\colon -2 < \abss{\tau} < 0} c_\tau \tau +
\sum_{\tau\in T\colon\abss{\tau}\geqs0} \hat c_\tau(z)\tau =: f_- +
f_+(z)\;,
\end{equation}
where the $c_\tau$ do not depend on $z$. 
\end{definition}

\begin{remark}
\label{rem:Dgamma0}
Not all constant functions belong to $\cD^\gamma_0$, as they still have to
satisfy the analytical bound \eqref{eq:D_gamma}, which includes the
requirement 
$\norm{f(z)-\Gamma_{z\bar z}f(\bar z)}_\beta = \Order{\norm{z-\bar z}^{\gamma-\beta}}$
as $z\rightarrow \bar z$ for all $\beta < \gamma$. In fact, since
$\Gamma_{z\bar z}\tau = (\Id\otimes\gamma_{z\bar z})\Delta\tau$, we see that a
sufficient condition for having $\Gamma_{z\bar z}\tau = \tau$ is 
\begin{equation}
 \label{eq:tau_tensor_unit}
 \Delta\tau = \tau\otimes\unit\;.
\end{equation}
Thus the sum defining $f_-$ should involve only terms $c_\tau \tau$ such that
$\Delta\tau = \tau\otimes\unit$. As can be seen in
Table~\ref{tab:FF_AllenCahn}, in our case this property is satisfied by the
symbols $\Xi$, $\RSI$, $\RSV$, $\RSW$, and will also hold e.g.\ for $\RSoI$, but
it does \emph{not} hold for $\RSWW$ for instance. Note that owing
to~\eqref{eq:def_g} and~\eqref{eq:Gamma_F02}, \eqref{eq:tau_tensor_unit}
implies $\Pi_{\bar z}\tau = \Pi_z\tau$ for all $z, \bar z\in\R^{d+1}$, i.e., the
model does not depend on the base point $z$ for these $\tau$.~$\lozenge$ 
\end{remark}

With this notation in place, we define the operator $\cK^Q_\gamma$ on
$\cD^\gamma_0$ by 
\begin{equation}
\label{eq:def_KQ} 
  (\cK^Q_\gamma f)(z) = \cE f_- + \int_{t-2T}^t Q(t-s)f_+(s,x)\6s\;.
\end{equation}
Note that this definition indeed requires $\cE$ to be defined only on symbols
$\tau$ of strictly negative homogeneity. 

\begin{remark}
\label{rem:JQhat}
An alternative definition of $\cK^Q_\gamma$, closer in spirit
to~\eqref{eq:K_gamma2}, would be to set 
\[
 (\cK^Q_\gamma f)(z) = \cE f(z) + \JQhat(z)f_+(z) + (\cN_\gamma^Qf_+)(z)\;,  
\]
where $\cE$ is defined for positive-homogeneous terms as in Remark~\ref{rem:JQ}
and 
\begin{align*}
\JQhat(t,x)\tau &= \int_{t-2T}^t
Q(t-s)(\Pi^{(\eps)}_{t,x}\tau)(s,x)\6s\,\unit\;, \\
(\cN_\gamma^Qf_+)(z) &= \int_{t-2T}^t Q(t-s)(\cR f_+
- \Pi^{(\eps)}_{t,x}f_+(t,x))(s,x)\6s\,\unit\;.
\end{align*}
While it is easily checked that $\cK^Q_\gamma$ defined in this way indeed
satisfies~\eqref{eq:fix_t03a}, the problem is that it typically does not map
$\cD^\gamma$ into itself, but only into some $\cD^{\gamma'}$ with $\gamma'<1$.
This is due to the fact that we cannot Taylor-expand the kernel $K_v$ in space,
preventing us from subtracting higher-order polynomial terms as
in~\eqref{eq:K_gamma4}.
\end{remark}

The following lemma on translation invariance of the canonical model
$Z^\eps=(\Pi^\eps,\Gamma^\eps)$ will allow us to prove that $\cK^Q_\gamma$ given
by~\eqref{eq:def_KQ} satisfies the required properties. We give its proof in
Appendix~\ref{app_proof_translation}.

\begin{lemma}
\label{lem_trans_inv}
Let $(\Pi^\eps,\Gamma^\eps)$ be the canonical model defined
by~\eqref{eq:Pi_x}, \eqref{eq:Pi_xI} and~\eqref{eq:E01}.
For all $h, z, \bar z\in\R^{d+1}$, all $\tau\in T$, all $\bar\tau\in\cF_+$ and
all $\eps>0$, one has
\begin{align}
\nonumber
\Pi^\eps_{z+h}\tau(\bar z+h) &= \Pi^\eps_z\tau(\bar z)\;, \\
\nonumber
\Gamma^\eps_{z+h,\bar z+h} \tau &= \Gamma^\eps_{z\bar z}\tau\;, \\
\pscal{\gamma^\eps_{z+h,\bar z+h}}{\bar \tau} &= \pscal{\gamma^\eps_{z\bar
z}}{\bar \tau}\;.
\label{eq:translation}
\end{align}
\end{lemma}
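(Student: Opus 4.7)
I would prove the three identities simultaneously by induction on the structural complexity of the symbols involved, measured by the total number of occurrences of $\Xi$, $\cI$ and $\cE$ in $\tau$ (and of $\cJ_k$ in $\bar\tau$). The strategy is: establish Claim~1 first, use it to extract a crucial $z$-independence property of the linear functionals $f^\eps_z$ on symbols of the form $\cJ_\ell\sigma$, derive Claim~3 from the Hopf-algebraic description of $\gamma^\eps_{z\bar z}$, and finally deduce Claim~2.

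\textbf{Claim 1.} The base cases are direct: $\Pi^\eps_z\unit\equiv 1$ is $z$-independent; $\Pi^\eps_{z+h}X^k(\bar z+h)=((\bar z+h)-(z+h))^k=(\bar z-z)^k=\Pi^\eps_z X^k(\bar z)$; and $\Pi^\eps_z\Xi=\xi^\eps$ is translation invariant in the appropriate sense (in law, since the mollified space--time white noise is stationary). For the inductive step, products are handled by the multiplicativity rule $(\Pi^\eps_z\tau_1\tau_2)(\bar z)=(\Pi^\eps_z\tau_1)(\bar z)(\Pi^\eps_z\tau_2)(\bar z)$. For $\tau=\cI\sigma$, I would apply~\eqref{eq:Pi_xI}, perform the change of variables $w\mapsto w+h$ in the integral against $K$, invoke translation invariance of the heat-kernel splitting and the inductive hypothesis on $\sigma$, and observe that the correction polynomial $\sum_\ell \frac{(\bar z-z)^\ell}{\ell!}\pscal{f^\eps_z}{\cJ_\ell\sigma}$ is translation invariant once the $z$-independence of $\pscal{f^\eps_z}{\cJ_\ell\sigma}$ is in hand. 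The case $\tau=\cE\sigma$ is analogous, using the change of variables $s\mapsto s+h_0$ and the fact that $Q(\bar t-s)$ depends only on the time difference.

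\textbf{Claim 3.} The linchpin is to show that $\pscal{f^\eps_z}{\cJ_\ell\sigma}$ does not depend on $z$. Indeed, setting $\bar z=z+w$ in~\eqref{eq:f_x} gives
\[
\pscal{f^\eps_z}{\cJ_\ell\sigma}
=-\int D^\ell K(-w)\,(\Pi^\eps_z\sigma)(z+w)\6 w,
\]
and Claim~1 applied to $\sigma$ converts this into $-\int D^\ell K(-w)(\Pi^\eps_0\sigma)(w)\6 w$, which is manifestly $z$-independent. Combined with $\pscal{f^\eps_z}{X_i}=-z_i$ and multiplicativity, this pins down $f^\eps_z$ on all of $\cF_+$. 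The group element $\gamma^\eps_{z\bar z}$, defined via $\Gamma^\eps_{z\bar z}=(F^\eps_z)^{-1}F^\eps_{\bar z}$, can be written (through the antipode and coproduct of the Hopf algebra $\cH_+$) as a convolution of $f^\eps_z$ and $f^\eps_{\bar z}$; the $\cJ_k\sigma$-factors contribute the translation-invariant quantities just identified, while the monomial factors enter only through the difference $z-\bar z$ (as the calculation $\pscal{\gamma^\eps_{z\bar z}}{X_i}=z_i-\bar z_i$ already illustrates). Translation invariance under $(z,\bar z)\mapsto(z+h,\bar z+h)$ then follows on every generator of $\cF_+$, and extends by multiplicativity.

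\textbf{Claim 2 and the main obstacle.} Claim~2 follows from Claim~3 by expanding $\Gamma^\eps_{z\bar z}\tau=(\Id\otimes\gamma^\eps_{z\bar z})\Delta\tau$: the right tensor slot always lies in $\cH_+$, and Claim~3 is applied factor by factor. The principal difficulty is the noise symbol $\Xi$: for a \emph{fixed} realisation $\xi^\eps$ is not translation invariant, so Claim~1 cannot hold pathwise, and the identities should be read as equalities in law (or as pathwise equalities under simultaneous translation of the noise). A secondary, purely combinatorial, subtlety is the bookkeeping with the antipode when writing $\gamma^\eps_{z\bar z}$ on a general element of $\cF_+$; this is routine provided one organises the induction by the depth of nesting of $\cJ$-symbols and propagates Claim~1 at each level before appealing to the $z$-independence of $\pscal{f^\eps_\cdot}{\cJ_\ell\sigma}$.
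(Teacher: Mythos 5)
Your proposal takes essentially the same route as the paper's proof in Appendix~A: an induction over the construction of the regularity structure, with changes of variable in the kernel integrals carrying the analytical content, and with the $\Gamma^\eps$ and $\gamma^\eps$ identities reduced to the $z$-independence of the quantities $\pscal{f^\eps_z}{\cJ_\ell\sigma}$. You prove Claim~1 first and derive Claims~3 and~2 from it, while the paper runs all three through each inductive step; this is only an organisational difference.

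The difficulty you flag at the end is, however, real and shared by the paper's proof, and it deserves more weight than you give it. Claim~1 with $\tau=\Xi$ reads $\xi^\eps(\bar z+h)=\xi^\eps(\bar z)$, which is false for a fixed realisation; the paper's step~2 asserts it as "translation invariance of the noise", a statement about the law of $\xi^\eps$, not about a path. More importantly, the failure is not confined to $\Xi$: the $z$-independence of $\pscal{f^\eps_z}{\cJ_\ell\sigma}$ --- the linchpin of your Claim~3 and of the paper's item~3 --- is false pathwise. Already for $\sigma=\cI\Xi$, $\pscal{f^\eps_z}{\cJ_\ell\sigma}=-(D^\ell K*\chi_\eps)(z)$ genuinely depends on $z$, so that
\begin{equation*}
\pscal{\gamma^\eps_{z\bar z}}{\cJ_i(\cI\Xi)}
=(\partial_i K*\chi_\eps)(z)-(\partial_i K*\chi_\eps)(\bar z)
\end{equation*}
is a function of the pair $(z,\bar z)$ and not of $z-\bar z$; all three identities hold only in distribution. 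And reading the lemma "in law" does not by itself salvage its applications: in Propositions~\ref{prop:Schauder_E} and~\ref{prop:Schauder_D_gamma_eta}, the lemma is used to replace $\Gamma^\eps_{t,x;\bar t,\bar x}$ by $\Gamma^\eps_{t-s,x;\bar t-s,\bar x}$ inside a $\6s$-integral, for a \emph{single} realisation of the model, and an equality in distribution does not license that pointwise substitution. What would actually be needed there is either a pathwise estimate for $\bigpar{\Gamma^\eps_{t,x;\bar t,\bar x}-\Gamma^\eps_{t-s,x;\bar t-s,\bar x}}f_+(\bar t-s,\bar x)$ (obtained from the cocycle relation and $\norm{\Gamma^\eps}_{\gamma;\fraKbar}$ over the time-fattened set), or a reformulation of $\cK^Q_\gamma$ that avoids translating the structure group.
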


The following result shows that the multilevel Schauder estimates contained
in~\cite[Thm.~5.12]{Hairer2014} also hold in the case of $\cE$, in a similar
form. 

\begin{prop}[Multilevel Schauder estimates on $\cK^Q_\gamma$]
\label{prop:Schauder_E} 
If $\gamma>0$, then the operator $\cK^Q_\gamma$ maps $\cD^\gamma_0$ into
$\cD^\gamma_0$, and satisfies 
\begin{equation}
 \label{eq:norm_E}
 \normDgamma{\cK^Q_\gamma f}_{\gamma;\fraK}
 \leqs (1\vee\norm{Q}_{L^1})\normDgamma{f}_{\gamma;\fraKbar}\;,
\end{equation} 
where $\norm{Q}_{L^1} = \displaystyle\int_\R
\abs{Q(t)}\6t=\displaystyle\int_0^{2T} \abs{Q(t)}\6t$, and $\fraKbar$
is the $2T$-fattening of $\fraK$. The identity 
\begin{equation}
 \label{eq:REf_KRf}
  (\cR\cK^Q_\gamma f)(t,x) = \int_{t-2T}^t Q(t-s)\cR f(s,x)\6s
\end{equation} 
holds for all $f\in\cD^\gamma_0$. Furthermore, if $(\Pibar,\Gammabar)$ is a
second translation-invariant model satisfying~\eqref{eq:Pi_xI}
and~\eqref{eq:E01}, and $\overline{\cK}^Q_\gamma$ is the associated lift, then 
\begin{equation}
 \label{eq:Schauder_bounds}
 \seminormff{\cK^Q_\gamma f}{\overline{\cK}^Q_\gamma \bar f}_{\gamma;\fraK} 
 \leqs (1\vee\norm{Q}_{L^1})\seminormff{f}{\bar f}_{\gamma;\fraKbar}
\end{equation} 
holds for all $f\in\cD^\gamma_0(\Gamma)$ and $\bar
f\in\cD^\gamma_0(\Gammabar)$. 
\end{prop}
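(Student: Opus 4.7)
The plan is to exploit the decomposition $f = f_- + f_+$ of Definition \ref{def_Dgamma0}, so that $(\cK^Q_\gamma f)(z) = \cE f_- + P_Q f_+(z)$ with $P_Q f_+(z) := \int_{t-2T}^t Q(t-s) f_+(s,x)\6s$ taking values in $\Tbar$. This splitting already matches the target structure of $\cD^\gamma_0$: since $\cE$ preserves homogeneity by \eqref{eq:E_norm}, the constant piece $\cE f_-$ lives in the negative-homogeneity sector, while $P_Q f_+$ is polynomial-valued because $f_+$ is. The key algebraic tool I would use repeatedly is that, by Remark \ref{rem:Dgamma0}, every symbol $\tau$ appearing in $f_-$ satisfies $\Delta\tau=\tau\otimes\unit$; combining this with \eqref{eq:ext05} yields $\Delta(\cE\tau) = \cE\tau\otimes\unit$, whence $\Gamma_{z\bar z}\cE\tau = \cE\tau$ via \eqref{eq:def_g}.

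For the $\cD^\gamma$-bound, the contribution of $\cE f_-$ is essentially trivial: the difference $(\cE f_-)(z) - \Gamma_{z\bar z}(\cE f_-)(\bar z)$ vanishes identically by the identity just noted, while the local norm is controlled by \eqref{eq:E_norm}. The $P_Q f_+$ piece requires more care. The pointwise bound $\norm{P_Q f_+(z)}_\beta \leqs \norm{Q}_{L^1}\normDgamma{f}_{\gamma;\fraKbar}$ is immediate. For the difference bound I would substitute $u = s-t$ and $u = s-\bar t$ in the two integrals defining $P_Q f_+(z)$ and $\Gamma_{z\bar z}P_Q f_+(\bar z)$ respectively, obtaining
\[
 P_Q f_+(z) - \Gamma_{z\bar z}P_Q f_+(\bar z) = \int_{-2T}^0 Q(-u)\bigbrak{f_+(t+u,x) - \Gamma_{z\bar z}f_+(\bar t+u,\bar x)}\6u\;.
\]
At this point Lemma \ref{lem_trans_inv}, applied with $h=(u,0)$, identifies $\Gamma_{z\bar z} = \Gamma_{(t+u,x),(\bar t+u,\bar x)}$, so that the bracket is pointwise controlled by $\normDgamma{f}_{\gamma;\fraKbar}\norm{z-\bar z}^{\gamma-\beta}_{\fraks}$; integrating against $Q$ produces the required factor $\norm{Q}_{L^1}$.

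The reconstruction identity \eqref{eq:REf_KRf} then follows by linearity of $\cR$ from its two constituent parts. Since $P_Q f_+$ is $\Tbar$-valued, one has $(\cR P_Q f_+)(z) = \pscal{\unit}{P_Q f_+(z)} = \int Q(t-s)\cR f_+(s,x)\6s$ directly from the definition. For $\cE f_-$, observing that $\Pi_z\tau$ coincides with $\bPi\tau$ for symbols with $\Delta\tau=\tau\otimes\unit$, the definition \eqref{eq:E01} combined with Fubini yields $\pscal{\cR\cE f_-}{\psi} = \pscal{\cR f_-}{\hat\psi}$ with $\hat\psi(s,x)=\int Q(t-s)\psi(t,x)\6t$, which is the correct distributional form of the desired identity. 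Finally, the comparison bound \eqref{eq:Schauder_bounds} is obtained by the same procedure applied to the difference of two models, using translation invariance of both and the bilinear structure of the seminorm \eqref{eq:D_gamma_seminorm}. The main obstacle, and the heart of the argument, is the difference bound for $P_Q f_+$: without the translation invariance of Lemma \ref{lem_trans_inv}, one cannot align $\Gamma_{z\bar z}$ with its action on time-shifted arguments, and without the restriction to $\cD^\gamma_0$ the $f_-$ contribution would lose its trivial $\Gamma$-transformation. Both features are precisely what allow a Schauder-type estimate to go through in this non-regularising setting.
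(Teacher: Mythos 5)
Your proposal takes essentially the same route as the paper: decompose $\cK^Q_\gamma f$ into the piece $\cE f_-$ and the $Q$-convolution of $f_+$, exploit that $\cE$ commutes with $\Gamma_{z\bar z}$ (making the $\cE f_-$ part easy), and invoke translation invariance via Lemma~\ref{lem_trans_inv} to shift the base-point of $\Gamma_{z\bar z}$ under the $Q$-integral. That last step is indeed the core of the Schauder argument in this non-regularising setting, exactly as the paper develops it, and the comparison bound follows by the same mechanism.

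One statement in your write-up is, however, incorrect. You assert that $f_+$ is $\Tbar$-valued, i.e.\ polynomial-valued, and therefore so is its $Q$-convolution. This is false: the decomposition $f = f_- + f_+$ of Definition~\ref{def_Dgamma0} splits the components of $f$ by the sign of their homogeneity, and the non-negative-homogeneity sector contains many non-polynomial symbols --- for example $\RSIW$, $\RSY$, $\RSVI$, $\RSII$ in Table~\ref{tab:FF_AllenCahn}, which do in fact appear in the fixed-point expansion~\eqref{eq:fix03}. Consequently your justification of the reconstruction identity for the $f_+$ piece, namely that it holds ``directly from the definition'' because that piece lies in $\Tbar$, does not stand as written. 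The correct justification, supplied by the paper, is that for $\gamma > 0$ one has $\cR f_+(z) = \pscal{\unit}{f_+(z)} = (\Pi_z f_+(z))(z)$ by the reconstruction theorem together with~\cite[Prop.~3.28]{Hairer2014} (equivalently, the canonical-model bound~\eqref{eq:E_05}, which forces $(\Pi_z\tau)(z)=0$ whenever $\abss{\tau}>0$), and no polynomial structure is needed. Once you replace your $\Tbar$-valuedness claim with this argument, the Fubini step for $\cE f_-$ and the rest of your outline carry through and coincide with the paper's proof.
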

\begin{proof}
We start by showing that relation~\eqref{eq:REf_KRf} holds, assuming $f,
\cK^Q_\gamma f\in\cD^\gamma_0$. It follows from the reconstruction theorem and
its
corollary~\cite[Prop.~3.28]{Hairer2014} (see also~\eqref{eq:E_05}) that
\[
 \cR f_+(z) = \pscal{\unit}{f_+(z)} = \bigpar{\Pi^\eps_z f_+(z)}(z)
\]
for all $z\in\R^{d+1}$. Furthermore, let 
\[
 (\widehat\cR f_-)(\bar z) := \Pi^\eps_z f_-(\bar z) 
 =\sum_{-2<\abss{\tau}<0} c_\tau (\Pi^\eps_z\tau)(\bar z)
\]
(where the right-hand side actually does not depend on $z$, cf.\
Remark~\ref{rem:Dgamma0}). Then we see that $\widehat\cR f_-$ trivially
satisfies the reconstruction theorem, so that by uniqueness of the
reconstruction operator if $\gamma>0$ we have 
\[
 \cR f_- = \widehat\cR f_- = \Pi^\eps_z f_-
\]
whenever $f_-\in\cD^\gamma_0$. Thus in fact 
\[
\cR f(z) = \bigpar{\Pi^\eps_z f(z)}(z) 
\]
holds for all $z\in\R^{d+1}$. It follows by~\eqref{eq:GammaE} and the same
argument as the one yielding  $\cR f_- = \Pi_z f_-$ that 
\begin{align*}
(\cR\cE f_-)(t,x) 
&= (\Pi^\eps_{t,x}\cE f_-)(t,x) \\
&= \int_{t-2T}^t Q(t-s) (\Pi^\eps_{t,x}f_-)(s,x)\6s \\
&= \int_{t-2T}^t Q(t-s) \cR f_-(s,x)\6s\;,
\end{align*}
which implies that we indeed have  
\begin{align*}
 (\cR\cK^Q_\gamma f)(t,x) 
 &= (\cR\cE f_-)(t,x) + \int_{t-2T}^t Q(t-s)\cR f_+(s,x)\6x \\
 &= \int_{t-2T}^t Q(t-s)\cR f(s,x)\6x\;.
\end{align*}
Next we check that $\cK^Q_\gamma$ indeed maps $\cD^\gamma_0$ into itself. Since
$\cE$ preserves homogeneity, it is in fact sufficient to show that
$\cK^Q_\gamma$ maps $\cD^\gamma_0$ into $\cD^\gamma$. 
First we note that for any $\beta<\gamma$, 
\begin{align*}
\norm{\cK^Q_\gamma f(z)}_\beta 
&\leqs \norm{\cE f_-(z)}_\beta + \int_{t-2T}^t \abs{Q(t-s)}
\norm{f_+(s,x)}_\beta \6s \\
&\leqs \norm{f_-(z)}_\beta + \norm{Q}_{L^1} \sup_{s\in[t-2T,t]}
\norm{f_+(s,x)}_\beta \\
&\leqs (1 \vee \norm{Q}_{L^1}) \normDgamma{f}_{\gamma;\fraKbar}\;,
\end{align*}
(note that depending on the sign of $\beta$, either one or the other term in the
sums contributes, but never both). 
Regarding terms evaluated in different locations, note that \eqref{eq:GammaE}
implies that 
\begin{align*}
\norm{\cE f_- - \Gamma^\eps_{z\bar z}\cE f_-}_\beta 
&= \norm{\cE(f_- - \Gamma^\eps_{z\bar z} f_-)}_\beta \\
&= \norm{f_- - \Gamma^\eps_{z\bar z} f_-}_\beta \\
&\leqs \normDgamma{f_-}_{\gamma;\fraK} \norm{z-\bar z}^{\gamma-\beta}_\fraks\;.
\end{align*}
As for the positive-homogeneous part $g=\cK^Q_\gamma f_+$, using
Lemma~\ref{lem_trans_inv} we get 
\begin{align*}
g(t,x) 
- \Gamma^\eps_{t,x;\bar t,\bar x}g(\bar t,\bar x) 
&= \int_0^{2T} Q(s) \bigbrak{f_+(t-s,x) - \Gamma^\eps_{t,x;\bar t,\bar x}
f_+(\bar t-s,\bar x)}\6s \\
&= \int_0^{2T} Q(s) \bigbrak{f_+(t-s,x) - \Gamma^\eps_{t-s,x;\bar t-s,\bar x}
f_+(\bar t-s,\bar x)}\6s\;, 
\end{align*}
and thus
\begin{align*}
\norm{g(t,x) 
- \Gamma^\eps_{t,x;\bar t,\bar x}g(\bar t,\bar x)}_\beta
&\leqs \int_0^{2T} \abs{Q(s)} \norm{f_+(t-s,x) - \Gamma^\eps_{t-s,x;\bar
t-s,\bar x} f_+(\bar t-s,\bar x)}_\beta\6s\\
&\leqs \norm{Q}_{L^1} \normDgamma{f_+}_{\gamma;\fraKbar} \norm{(t,x)-(\bar
t,\bar x)}^{\gamma-\beta}_\fraks\;.
\end{align*}
Collecting the estimates obtained and using the definition~\eqref{eq:D_gamma} of
$\normDgamma{\cdot}$, we see that $\cK^Q_\gamma f\in\cD^\gamma$ and that it
satisfies~\eqref{eq:norm_E}. 

Consider finally the case where we have a second function $\bar
f\in\cD^\gamma_0(\Gammabar)$. Using again~\eqref{eq:GammaE}, it is immediate to
see that 
\[
 \seminormff{\cE f_-}{\cE \bar f_-}_{\gamma;\fraK}
 \leqs \seminormff{f_-}{\bar f_-}_{\gamma;\fraKbar}\;.
\]
Writing $\bar g(z)=\overline\cK^Q_\gamma \bar f_+(z)$, it follows in the same
way as above that 
\[
 \norm{g-\bar g}_{\gamma;\fraK} 
 \leqs \norm{Q}_{L^1} \norm{f_+ - \bar f_+}_{\gamma,\fraKbar}\;.
\]
Finally, we have 
\begin{multline*}
g(t,x) - \bar g(t,x) - \Gamma^\eps_{t,x;\bar t,\bar x} g(\bar t,\bar x) 
+ \Gammabar_{t,x;\bar t,\bar x} \bar g(\bar t,\bar x)   \\
 = \int_0^{2T} Q(s) 
\bigbrak{f_+(t-s,x) - \bar f_+(t-s,x) - \Gamma^\eps_{t,x;\bar t,\bar x} f_+(\bar
t-s,\bar x) + \Gammabar_{t,x;\bar t,\bar x} \bar f_+(\bar t-s,\bar x)}\6s\;.  
\end{multline*}
Using translation invariance of $\Gamma^\eps$ and $\Gammabar$, we obtain for all
$\beta<\gamma$ 
\begin{align*}
&\norm{g(t,x) - \bar g(t,x) - \Gamma^\eps_{t,x;\bar t,\bar x} g(\bar t,\bar x) 
+ \Gammabar_{t,x;\bar t,\bar x} \bar g(\bar t,\bar x)}_\beta \\
&\qquad\qquad \leqs 
\norm{Q}_{L^1} \sup_{s\in[0,2T]} \norm{(t-s,x)-(\bar t-s,\bar
x)}^{\gamma-\beta}_\fraks \seminormff{f_+}{\bar f_+}_{\gamma;\fraKbar} \\
&\qquad\qquad =  
\norm{Q}_{L^1} \norm{(t,x)-(\bar t,\bar
x)}^{\gamma-\beta}_\fraks \seminormff{f_+}{\bar f_+}_{\gamma;\fraKbar}\;,
\end{align*}
from which~\eqref{eq:Schauder_bounds} follows easily. 
\end{proof}


\subsection{Extension to the space $\cD^{\gamma,\eta}$}
\label{ssec_D_gamma_eta}

As already mentioned at the end of Section~\ref{ssec_convolution}, the
definition of the spaces $\cD^\gamma$ has to be modified in order to be able to
deal with the time-zero singularity of the heat kernel. In particular, we have
to check that the estimates on $\cK^Q_\gamma$ established in
Proposition~\ref{prop:Schauder_E} still hold in these modified spaces. 

We recall a few notations from~\cite[Section~6]{Hairer2014}. Let
$P=\setsuch{(t,x)\in\R^{d+1}}{t=0}$ denote the time-zero hyperplane, and
introduce the scaled norms 
\begin{equation}
 \label{eq:normP}
 \norm{z}_P = 1\wedge \abs{t}^{1/2}\;, \qquad
 \norm{z,\bar z}_P = \norm{z}_P \wedge \norm{\bar z}_P
 = 1 \wedge \abs{t}^{1/2} \wedge \abs{\bar t\,}^{1/2}\;.
\end{equation} 
For a compact $\fraK \subset \R^{d+1}$, let 
\begin{equation}
 \label{eq:KP} 
 \fraK_P = \bigsetsuch{(z,\bar z)\in(\fraK\setminus P)^2}
 {z\neq\bar z, \norm{z-\bar z}_\fraks \leqs \norm{z,\bar z}_P}\;.
\end{equation} 

\begin{definition}[{\cite[Def.~6.2]{Hairer2014}}]
\label{def:D_gamma_eta} 
Given a model $(\Pi,\Gamma)$ and constants $\gamma>0$ and $\eta\in\R$, set 
\begin{equation}
 \label{eq:norm_gamma_eta1}
 \norm{f}_{\gamma,\eta;\fraK} 
 := \sup_{z\in\fraK\setminus P}~ \sup_{\beta<\gamma}~
 \frac{\norm{f(z)}_\beta}{\norm{z}_P^{(\eta-\beta)\wedge0}}\;, 
 \qquad
 \seminormbox{f}_{\gamma,\eta;\fraK} 
 := \sup_{z\in\fraK\setminus P}~ \sup_{\beta<\gamma}~
 \frac{\norm{f(z)}_\beta} {\norm{z}_P^{\eta-\beta}}\;.
\end{equation} 
The space $\cD^{\gamma,\eta}$ consists of all functions $f:\R^{d+1}\setminus
P\to T_\gamma^-$ such that 
\begin{equation}
 \label{eq:norm_gamma_eta2}
 \normDgamma{f}_{\gamma,\eta;\fraK} := 
 \norm{f}_{\gamma,\eta;\fraK}  
 + \sup_{(z,\bar z)\in\fraK_P}
 \sup_{\beta<\gamma}~\frac{\norm{f(z)-\Gamma_{z\bar z}f(\bar z)}_\beta}
 {\norm{z-\bar z}^{\gamma-\beta}_{\fraks}\norm{z,\bar z}_P^{\eta-\gamma}}
< \infty\;.
\end{equation}
Given a second model $(\Pibar,\Gammabar)$ and $\bar
f\in\cD^{\gamma,\eta}(\Gammabar)$, we also set 
\begin{equation}
 \label{eq:norm_gamma_eta3}
 \seminormff{f}{\bar f}_{\gamma,\eta;\fraK}
 := \norm{f-\bar f}_{\gamma,\eta;\fraK} 
 + \sup_{(z,\bar z)\in\fraK_P}
 \sup_{\beta < \gamma}
 ~\frac{\norm{f(z) - \bar f(z) - \Gamma_{z\bar z}f(\bar z) +
\Gammabar_{z\bar z}\bar f(\bar z)}_\beta}{\norm{z-\bar
z}_{\fraks}^{\gamma-\beta}\norm{z,\bar z}_P^{\eta-\gamma}}\;.
\end{equation}
Finally, similarly to Definition~\ref{def_Dgamma0}, we define
$\cD_0^{\gamma,\eta}$ to be the set of $f\in\cD^{\gamma,\eta}$ whose components
with negative homogeneity do not depend on $z$. 
\end{definition}

Most of the results in~\cite[Section~6]{Hairer2014} are directly applicable to
the present setting. The only result that has to be adapted is
Proposition~6.16, which takes here the following form.

\begin{prop}[Multilevel Schauder estimates on $\cD^{\gamma,\eta}_0$]
\label{prop:Schauder_D_gamma_eta}
Let $f\in\cD_0^{\gamma,\eta}(\Gamma^\eps)$ with  
\begin{equation}
 \label{eq:Schauder_cond_eta_gamma}
 -2 < \eta < 0 < \gamma {}<{} \eta+2\;,
\end{equation} 
and assume that $f_+(t,x) = 0$ whenever $t<0$. 
Then $\cK^Q_\gamma f\in\cD_0^{\gamma,\eta}(\Gamma^\eps)$, and there exists a
constant $C_1$, depending only on $Q$, such that 
\begin{equation}
\label{eq:Schauder_D_gamma_eta} 
\normDgamma{\cK^Q_\gamma f}_{\gamma,\eta;\fraK} \leqs 
C_1 \bigbrak{1+\norm{\Gamma^\eps}_{\gamma;\fraKbar}}
\normDgamma{f}_{\gamma,\eta;\fraKbar}\;, 
\end{equation}
where $\fraKbar$ is the $2T$-fattening of $\fraK$. Furthermore, let $\bar
f\in\cD_0^{\gamma,\eta}(\Gammabar)$, where $(\Pibar,\Gammabar)$ is another
translation-invariant model satisfying~\eqref{eq:Pi_xI} and~\eqref{eq:E01}, and
assume that $\bar f_+(t,x) = 0$ whenever $t<0$. Then there exists a constant
$C_2$, depending only on $Q$, such that 
\begin{equation}
\label{eq:Schauder_D_gamma_eta2} 
\seminormff{\cK^Q_\gamma f}{\overline\cK^Q_\gamma \bar f}_{\gamma,\eta;\fraK}
\leqs C_1 \bigbrak{1+\norm{\Gamma^\eps}_{\gamma;\fraKbar}}
\seminormff{f}{\bar f}_{\gamma,\eta;\fraKbar}
+ C_2 \norm{f}_{\gamma,\eta;\fraKbar} \norm{\Gammabar -
\Gamma^\eps}_{\gamma;\fraKbar}\;.
\end{equation}
\end{prop}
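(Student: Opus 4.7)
The plan is to mirror the proof of Proposition~\ref{prop:Schauder_E}, with the singular weights of $\cD^{\gamma,\eta}$ requiring a few adjustments. Write $f=f_-+f_+$ as in Definition~\ref{def_Dgamma0}, so that
$$\cK^Q_\gamma f = \cE f_- + g\;, \qquad g(t,x) = \int_0^{2T} Q(s)\,f_+(t-s,x)\,\6s\;.$$
Since $\cE f_-$ contributes only at strictly negative homogeneity levels while $g$ contributes only at non-negative levels, this decomposition diagonalises the estimates at each level $\beta$; in particular, the negative-homogeneity part of $\cK^Q_\gamma f$ is the constant $\cE f_-$, hence $\cK^Q_\gamma f\in\cD_0^{\gamma,\eta}$ as soon as the analytic bounds are in place.

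The summand $\cE f_-$ is handled as in the proof of Proposition~\ref{prop:Schauder_E}: the identity $\Gamma^\eps_{z\bar z}\cE=\cE\Gamma^\eps_{z\bar z}$ from~\eqref{eq:GammaE} together with~\eqref{eq:E_norm} reduces everything to bounds on $f_-$ itself. To extract these from the assumption $f\in\cD_0^{\gamma,\eta}$, I would write $f_-=f-f_+$ and project onto negative-homogeneity components, using that $\Gamma^\eps_{z\bar z}f_+(\bar z)$ can produce cross-contributions at those levels controlled by $\norm{\Gamma^\eps}_{\gamma;\fraKbar}\normDgamma{f_+}_{\gamma,\eta;\fraKbar}$. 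This is the origin of the factor $1+\norm{\Gamma^\eps}_{\gamma;\fraKbar}$ in~\eqref{eq:Schauder_D_gamma_eta}.

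For $g$, the pointwise bound at level $\beta\geqs 0$ follows from $\norm{f_+(z')}_\beta\leqs\normDgamma{f}_{\gamma,\eta;\fraKbar}\,\norm{z'}_P^{(\eta-\beta)\wedge 0}$ and the vanishing of $f_+(u,x)$ for $u<0$: for $z=(t,x)$ with $0<t\leqs T$,
$$\norm{g(t,x)}_\beta \leqs \norm{Q}_\infty\normDgamma{f}_{\gamma,\eta;\fraKbar}\int_0^t (t-s)^{((\eta-\beta)\wedge 0)/2}\,\6s\leqs C\,\normDgamma{f}_{\gamma,\eta;\fraKbar}\,\norm{z}_P^{(\eta-\beta)\wedge 0}\;,$$
where integrability is ensured by $\eta>-2$ and $\gamma<\eta+2$, and the final power of $\norm{z}_P$ comes from $t^{1+((\eta-\beta)\wedge 0)/2}\leqs T\,t^{((\eta-\beta)\wedge 0)/2}$. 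For the two-point estimate, I would use translation invariance (Lemma~\ref{lem_trans_inv}) to write
$$g(z)-\Gamma^\eps_{z\bar z}g(\bar z)=\int_0^{2T}Q(s)\bigbrak{f_+(z-se_0)-\Gamma^\eps_{z-se_0,\bar z-se_0}f_+(\bar z-se_0)}\6s\;,$$
with $e_0=(1,0,\dots,0)$. The main technical obstacle is that, although $\norm{(z-se_0)-(\bar z-se_0)}_\fraks=\norm{z-\bar z}_\fraks$ is unchanged, the weight $\norm{z-se_0,\bar z-se_0}_P$ may shrink below $\norm{z-\bar z}_\fraks$ as $s$ approaches $t$ or $\bar t$, so the shifted pair may leave $\fraKbar_P$; I would split the integral into the admissible range (use the two-point $\cD^{\gamma,\eta}$-bound directly) and the remainder (use one-point bounds combined with $\norm{\Gamma^\eps}_{\gamma;\fraKbar}$), and verify that the resulting $s$-integrals remain controlled by the same exponent inequalities as above.

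The comparison bound~\eqref{eq:Schauder_D_gamma_eta2} follows from exactly the same decomposition applied to $\cK^Q_\gamma f-\overline\cK^Q_\gamma\bar f$, combined with the standard identity
$$\Gamma^\eps_{z\bar z}f(\bar z)-\Gammabar_{z\bar z}\bar f(\bar z)=\Gamma^\eps_{z\bar z}\bigbrak{f(\bar z)-\bar f(\bar z)}+\bigbrak{\Gamma^\eps_{z\bar z}-\Gammabar_{z\bar z}}\bar f(\bar z)\;,$$
which separates the contributions into a $\seminormff{f}{\bar f}_{\gamma,\eta;\fraKbar}$-term and a $\norm{\Gammabar-\Gamma^\eps}_{\gamma;\fraKbar}$-term, yielding the two terms on the right-hand side of~\eqref{eq:Schauder_D_gamma_eta2}.
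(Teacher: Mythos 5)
Your overall strategy coincides with the paper's proof: decompose $\cK^Q_\gamma f = \cE f_- + g$, handle $\cE f_-$ via $\Gamma^\eps\cE = \cE\Gamma^\eps$ as in Proposition~\ref{prop:Schauder_E}, bound $g$ pointwise by pushing the weight $\norm{z}_P^{\eta-\beta}$ through the time integral, and — for the two-point estimate on $g$ — split the $s$-integral into the range where the shifted pair stays in $\fraK_P$ (use the two-point $\cD^{\gamma,\eta}$-bound on $f_+$) and the remainder near $s\approx t$ (use one-point bounds plus $\norm{\Gamma^\eps}_{\gamma;\fraKbar}$). The paper's split is explicitly $[0,\,t-\norm{z-\bar z}^2_\fraks]$ versus $[t-\norm{z-\bar z}^2_\fraks,\,2T\wedge\bar t\,]$, and the comparison bound~\eqref{eq:Schauder_D_gamma_eta2} is obtained exactly by the three-term decomposition you write. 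So the main skeleton of your argument is the paper's.

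One point of your write-up is, however, mistaken: the claim that the factor $1+\norm{\Gamma^\eps}_{\gamma;\fraKbar}$ originates from the treatment of $\cE f_-$ via ``cross-contributions'' of $\Gamma^\eps_{z\bar z}f_+(\bar z)$ to negative-homogeneity levels. For $f\in\cD^{\gamma,\eta}_0$, the negative part $f_-$ is a fixed element of $T$, constant in $z$, supported on symbols with $\Delta\tau=\tau\otimes\unit$ (cf.\ Remark~\ref{rem:Dgamma0}), so $\Gamma^\eps_{z\bar z}f_-=f_-$ and there is nothing to disentangle from $f_+$ at negative levels: the estimate $\normDgamma{\cE f_-}_{\gamma,\eta;\fraK}\leqs\normDgamma{f_-}_{\gamma,\eta;\fraKbar}$ is obtained directly from Proposition~\ref{prop:Schauder_E} and does not involve $\norm{\Gamma^\eps}$ at all. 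The factor $1+\norm{\Gamma^\eps}_{\gamma;\fraKbar}$ in~\eqref{eq:Schauder_D_gamma_eta} comes entirely from the second ($f_+$) summand, specifically from estimating $\norm{\Gamma^\eps_{z\bar z}f_+(\bar t-s,\bar x)}_\beta$ by $\sum_\delta\norm{f_+}_{\gamma,\eta}\norm{z-\bar z}_\fraks^{\delta-\beta}\norm{\Gamma^\eps}_{\gamma;\fraKbar}/(1\wedge \bar s)^{(\delta-\eta)/2}$ on the inadmissible integration range, and integrating the resulting expression against $|Q|$ to produce a bound of order $\norm{z-\bar z}_\fraks^{2+\eta-\beta}$, which is then absorbed into $\norm{z-\bar z}_\fraks^{\gamma-\beta}\norm{z,\bar z}_P^{\eta-\gamma}$ using $\gamma<\eta+2$. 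Your proposal correctly identifies this second occurrence of $\norm{\Gamma^\eps}$; it simply should not have added a spurious first one.
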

\begin{proof}
It follows as in the proof of Proposition~\ref{prop:Schauder_E} that 
\[
 \normDgamma{\cE f_-}_{\gamma,\eta;\fraK} \leqs
\normDgamma{f_-}_{\gamma,\eta;\fraKbar}\;, \qquad 
 \seminormff{\cE f_-}{\cE \bar f_-}_{\gamma,\eta;\fraK} \leqs 
 \seminormff{f_-}{\bar f_-}_{\gamma,\eta;\fraKbar}\;,
\]
so that we only have to consider the positive-homogeneous part $f_+$ of $f$. 
For convenience, we again write $g(z)$ instead of $(\cK^Q_\gamma f_+)(z)$. Note
that 
\[
 \norm{f_+}_{\gamma,\eta;\fraK} = \sup_{z\in\fraK\setminus P}
\sup_{\beta<\gamma} 
 \frac{\norm{f_+(z)}_\beta}{\norm{z}_P^{(\eta-\beta)\wedge 0}}
 = \sup_{z=(t,x)\in\fraK\setminus P} \; \sup_{0\leqs\beta<\gamma} \Bigbrak{
 (1\wedge\abs{t})^{(\beta-\eta)/2} \; \norm{f_+(z)}_\beta}\;,
\]
since we may always assume $\beta\geqs0$ and thus $(\eta-\beta)\wedge 0 =
\eta-\beta$. Since $f_+(t,x)=0$ for $t<0$ we have 
\[
 \norm{g(t,x)}_\beta 
\leqs \int_{(t-2T)\vee0}^t \abs{Q(t-s)} \norm{f_+(s,x)}_\beta \6s \\
\leqs \int_0^t \frac{\abs{Q(t-s)}}{(1\wedge s)^{(\beta-\eta)/2}} \6s \, 
\norm{f_+}_{\gamma,\eta;\fraKbar}\;.
\]
Since $\beta<\gamma<\eta+2$, the integral is convergent and we have 
\[
 \norm{g}_{\gamma,\eta;\fraK} \leqs C_Q
\norm{f_+}_{\gamma,\eta;\fraKbar}\;,
\]
where $C_Q$ depends only on $Q$. In order to estimate
$\normDgamma{g}_{\gamma,\eta;\fraK}$, we also need to bound differences of the
form $g(z)-\Gamma^\eps_{z\bar z}g(\bar z)$. From now on we assume that $z=(t,x)$
and $\bar z=(\bar t,\bar x)$ with $t\leqs\bar t$, since the case $t>\bar t$
follows by symmetry. Using the fact that $f_+$ is supported on $\set{t\geqs0}$
and translation invariance, we get
\[
 \norm{g(z)-\Gamma^\eps_{z\bar z}g(\bar z)}_\beta 
\leqs \int_0^{2T\wedge\bar t} \abs{Q(s)} \norm{f_+(t-s,x)
-\Gamma^\eps_{t-s,x;\bar t-s,\bar x}f_+(\bar t-s,\bar x)}_\beta \6s\;.
\]
However, there is a difficulty because $(z,\bar z)\in\fraK_P$ does not
automatically imply that one has $((t-s,x),(\bar t-s,\bar x))\in\fraK_P$. In
fact, 
\begin{align*}
((t-s,x),(\bar t-s,\bar x))\in\fraK_P 
&\Leftrightarrow (1\wedge\abs{t-s}{}\wedge{}\abs{\bar t-s})^{1/2} \geqs
\norm{(t-s,x)-(\bar t-s,\bar x)}_\fraks \\
&\Leftrightarrow 1\wedge\abs{t-s}{}\wedge{}\abs{\bar t-s} \geqs
\norm{z-\bar z}^2_\fraks\;.
\end{align*}
By definition of $\fraK_P$, we can assume that $\norm{z-\bar z}_\fraks \leqs 1$,
so that $s\leqs t-\norm{z-\bar z}_\fraks^2$ is a sufficient condition for
having $((t-s,x),(\bar t-s,\bar x))\in\fraK_P$. We split the integration
interval into $[0,t-\norm{z-\bar z}^2_\fraks]$ and $[t-\norm{z-\bar
z}^2_\fraks,2T\wedge\bar t\,]$, and treat each interval separately. For the
first interval, we use the definition of $\normDgamma{f_+}$ to get 
\begin{align*}
\int_0^{t-\norm{z-\bar z}^2_\fraks} \abs{Q(s)} &\norm{f_+(t-s,x)
-\Gamma^\eps_{t-s,x;\bar t-s,\bar x}f_+(\bar t-s,\bar x)}_\beta \6s \\
&\leqs \int_0^{t-\norm{z-\bar z}^2_\fraks}
\frac{\abs{Q(s)}}{(1\wedge(t-s))^{(\gamma-\eta)/2}}\6s \; \norm{z-\bar
z}^{\gamma-\beta}_\fraks \normDgamma{f_+}_{\gamma,\eta;\fraKbar} \\
&\leqs \frac{C_Q}{(1\wedge t)^{(\gamma-\eta)/2}} \norm{z-\bar
z}^{\gamma-\beta}_\fraks \normDgamma{f_+}_{\gamma,\eta;\fraKbar}\;.
\end{align*}
For the second part of the integral, we bound $\norm{f_+(t-s,x)
-\Gamma^\eps_{z\bar z}f_+(\bar t-s,\bar x)}_\beta$ above by the sum
$\norm{f_+(t-s,x)}_\beta + \norm{\Gamma^\eps_{z\bar z}f_+(\bar t-s,\bar
x)}_\beta$, and treat each term separately. Using the assumption
$f_+(t,x)=0$ for $t<0$ and performing the change of variables $s=t-\bar s$, 
the first term becomes 
\begin{align*}
\int_0^{\norm{z-\bar z}_\fraks^2} \abs{Q(t-\bar s)} \norm{f_+(\bar s,x)}_\beta
\6\bar s 
&\leqs \int_0^{\norm{z-\bar z}_\fraks^2} \frac{\abs{Q(t-\bar s)}}
{(1\wedge\bar s)^{(\beta-\eta)/2}} \6\bar s
\;\norm{f_+}_{\gamma,\eta;\fraKbar}\\
&\lesssim \norm{z-\bar z}_\fraks^{2-\beta+\eta}
\;\norm{f_+}_{\gamma,\eta;\fraKbar}\;.
\end{align*}
For the second term, we use the definition of $\norm{\Gamma^\eps}$ and the
decomposition of $f_+$ as a linear combination of $\tau$ to obtain  
\begin{align*}
\norm{\Gamma^\eps_{z\bar z}f_+(\bar s,\bar x)}_\beta 
&\leqs \sum_\delta \norm{f_+(\bar s,\bar x)}_\delta \norm{z-\bar
z}_\fraks^{\delta-\beta} \norm{\Gamma^\eps}_{\gamma;\fraKbar} \\
&\leqs \sum_\delta \frac{\norm{f_+}_{\gamma,\eta;\fraKbar}}
{(1\wedge\bar s)^{(\delta-\eta)/2} } \norm{z-\bar
z}_\fraks^{\delta-\beta} \norm{\Gamma^\eps}_{\gamma;\fraKbar}\;,
\end{align*}
where the sum runs over finitely many values of $\delta\in[0,\gamma]$. 
Setting $s=\bar t-\bar s$, this yields a contribution 
\begin{align*}
\int_0^{\norm{z-\bar z}_\fraks^2+\bar t-t} \abs{Q(\bar t-\bar s)} 
&\norm{\Gamma^\eps_{z\bar z} f_+(\bar s,x)}_\beta \6\bar s \\
&\leqs \sum_\delta \int_0^{\norm{z-\bar z}_\fraks^2+\bar t-t}
\frac{\abs{Q(\bar t-\bar s)}}
{(1\wedge\bar s)^{(\delta-\eta)/2}} \6\bar s
\;\norm{z-\bar z}_\fraks^{\delta-\beta}\norm{f_+}_{\gamma,\eta;\fraKbar}
\norm{\Gamma^\eps}_{\gamma;\fraKbar} \\
&\lesssim \sum_\delta \norm{z-\bar z}_\fraks^{2-\delta+\eta}
\norm{z-\bar z}_\fraks^{\delta-\beta}
\norm{f_+}_{\gamma,\eta;\fraKbar}
\norm{\Gamma^\eps}_{\gamma;\fraKbar} \\
&\lesssim \norm{z-\bar z}_\fraks^{2+\eta-\beta}\norm{f_+}_{\gamma,\eta;\fraKbar}
\norm{\Gamma^\eps}_{\gamma;\fraKbar}\;.
\end{align*}
Combining the different bounds, we obtain~\eqref{eq:Schauder_D_gamma_eta}.

It remains to obtain a similar bound on $\seminormff{g}{\bar g}$, where $\bar
g=\overline\cK^Q_\gamma\bar f_+$. The same argument as before yields 
\[ 
\norm{g-\bar g}_{\gamma,\eta;\fraK} \leqs C_Q
\norm{f_+-\bar f_+}_{\gamma,\eta;\fraKbar}\;. 
\]
Regarding the second term in the definition of $\seminormff{g}{\bar g}$, we
again split the integral defining it into two parts. The first one is  
\begin{align*}
\int_0^{t-\norm{z-\bar z}^2_\fraks} \abs{Q(s)} &\norm{f_+(t-s,x)
-\bar f_+(t-s,x)-\Gamma^\eps_{z\bar z}f_+(\bar t-s,\bar x)
+\Gammabar_{z\bar z}\bar f_+(\bar t-s,\bar x)}_\beta \6s \\
&\leqs \frac{C_Q}{(1\wedge t)^{(\gamma-\eta)/2}} \norm{z-\bar
z}^{\gamma-\beta}_\fraks \normDgamma{f_+;\bar f_+}_{\gamma,\eta;\fraKbar}\;.
\end{align*}
For the second part, we use the decomposition 
\begin{align*}
\norm{f_+(t-s,x)
&-\bar f_+(t-s,x)-\Gamma^\eps_{z\bar z}f_+(\bar t-s,\bar x)
+\Gammabar_{z\bar z}\bar f_+(\bar t-s,\bar x)}_\beta \\
\leqs{}& \norm{f_+(t-s,x)-\bar f_+(t-s,x)}_\beta \\
&{}+\norm{\Gamma^\eps_{z\bar z}(f_+(\bar t-s,\bar x)-\bar f_+(\bar t-s,\bar
x))}_\beta \\
&{}+\norm{(\Gamma^\eps_{z\bar z}-\Gammabar_{z\bar z})\bar f_+(\bar t-s,\bar
x)}_\beta\;,
\end{align*}
and estimate the integral of each term separately, in the same way as above. 
Combining the different bounds, we obtain~\eqref{eq:Schauder_D_gamma_eta2}. 
\end{proof}

\begin{remark}
\label{rem:Schauder_integer}
Note that the condition~\eqref{eq:Schauder_cond_eta_gamma} on $\eta$ and
$\gamma$ excludes integer values for these parameters, except that we may have 
either $\eta=-1$ or $\gamma=1$. The reason while these integer values are
allowed is that owing to the parabolic scaling, the exponents such as
$(\gamma-\eta)/2$ occurring in integrals over time can never take integer
values that could make them divergent. 
\end{remark}


\section{The fixed-point equation}
\label{sec_fix}

In this section, we construct a fixed-point equation lifting~\eqref{eq:fhn02} to
an appropriate space $\cD^{\gamma,\eta}$, and prove that it admits a unique
fixed point, depending Lipschitz-continuously on the lifted initial data and
noise. The procedure is very similar to the one for the Allen--Cahn equation or
the $\Phi_3^4$ model carried out in~\cite[Section~9]{Hairer2014}, with only a
few adjustments required by the additional variable $v$. 


\subsection{The set-up}
\label{ssec_fix_setup}

From now on, we assume that the space dimension is $d=3$, since this is the
most difficult case which is still renormalisable. 
Consider the fixed-point equation on $\cD^{\gamma,\eta}\times\cD^{\gamma,\eta}$
given by 
\begin{align}
\nonumber
U &= (\cK_{\bar\gamma}+R_\gamma\cR)\Rplus \bigbrak{\Xi+F(U,V)} + Gu_0\;, \\
V &= \cK^Q_\gamma\Rplus U + \widehat Qv_0\;, 
\label{eq:fix01} 
\end{align}
where $\Rplus(t,x)=\indexfct{t>0}$ and $\bar\gamma\geqs\gamma-2$ as before, 
and by definition, $\cR\Rplus\Xi$ is given by the distribution $\xi^\eps\indexfct{t>0}$.
Furthermore, 
\begin{align}
\nonumber
(Gu_0)(t,x) &= (S(t)u_0)(x) = \int_{\R^3} G(t,x-y)u_0(y)\6y\;, \\
(\widehat Qv_0)(t,x) &= \e^{tA_2}v_0(x) \indexfct{t>0}\;.
\label{ew:fix01b} 
\end{align}
In fact, these distributions have to be lifted to $\cD^{\gamma,\eta}$, which is
always possible by construction, cf~\cite[Def.~2.14]{Hairer2014}. In order to 
understand the structure of the solution of the
fixed-point equation~\eqref{eq:fix01}, let us rewrite it in the form 
\begin{align}
\nonumber
U &= \cI\Rplus \bigbrak{\Xi+F(U,V)} + \widehat R(U,V) + Gu_0\;, \\
V &= \cE\Rplus U_- + \cQ \Rplus U_+ + \widehat Qv_0\;, 
\label{eq:fix01a} 
\end{align}
where $U_-$ and $U_+$ denote respectively the strictly negative homogeneous
and positive homogeneous parts of $U$ (cf.~\eqref{eq:def_Dgamma0}),
$\widehat R(U,V)$ contains the polynomial part of
$\cK_{\bar\gamma}\bigbrak{\Xi+F(U,V)}$ as well as the term
$R_\gamma\cR\bigbrak{\Xi+F(U,V)}$ (which always belongs to $\Tbar$), and
\begin{equation}
 \label{eq:def_cQ}
 \cQ f_+(t,x) = \int_{t-2T}^t Q(t-s)f_+(s,x)\6s\;.
\end{equation} 
Assume $F$ is a cubic polynomial of the form 
\begin{equation}
 \label{eq:fix02}
  F(u,v) = \alpha_1 u + \alpha_2 v + \beta_1 u^2 + \beta_2 uv + \beta_3 v^2
 + \gamma_1 u^3 + \gamma_2 u^2v + \gamma_3 uv^2 + \gamma_4 v^3\;.
\end{equation}
Further assume that the initial conditions $u_0$ and $v_0$ are such that $Gu_0$
and $\widehat Qv_0$ are sufficiently regular. Iterating the
map~\eqref{eq:fix01a}, starting for instance with identically zero functions, it
is not difficult to see that any fixed point of~\eqref{eq:fix01} must have the
form 
\begin{alignat}{4}
\nonumber
U &= \RSI + \varphi\unit 
&&{}+{} \bigbrak{a_1\RSIW + a_2\RSIWo + a_3\RSIWoo + a_4\RSIWooo} 
&&{}+{} \bigbrak{b_1\RSY + b_2\RSYo + b_3\RSYoo} + \pscal{\nabla\varphi}{X} 
&&{}+{}\varrho_U\;,
\\
V &= \RSoI + \psi\unit
&&{}+{} \bigbrak{\hat a_1\RSIW + \hat a_2\RSIWo + \hat a_3\RSIWoo +
\hat a_4\RSIWooo}
&&{}+{} \bigbrak{\hat b_1\RSY + \hat b_2\RSYo + \hat b_3\RSYoo} +
\pscal{\nabla\psi}{X}
&&{}+{}\varrho_V\;,
\label{eq:fix03} 
\end{alignat}
where the coefficients satisfy $a_i=\gamma_i$ for $i=1,2,3,4$,
\begin{align}
\nonumber
b_1 &= \beta_1 + 3\varphi\gamma_1 +  \psi\gamma_2\;, \\
\label{eq:fix04} 
b_2 &= \beta_2 + 2\varphi\gamma_2 + 2\psi\gamma_3\;, \\
b_3 &= \beta_3 +  \varphi\gamma_3 + 3\psi\gamma_4\;,
\nonumber
\end{align}
and $\hat a_i\tau = \cQ(a_i\tau)$, $\hat b_i\tau = \cQ(b_i\tau)$. 
In~\eqref{eq:fix03}, $\varrho_U, \varrho_V$ denote terms of homogeneity at least
$\frac32 - \kappa$, and $\varphi(z)$, $\psi(z)$, $\nabla\varphi(z)$,
$\nabla\psi(z)$ are independent functions --- the notation is not supposed to
suggest that $\varphi$ and $\psi$ are differentiable. The scalar product
notation $\pscal{\nabla\varphi}{X}$ is a shorthand for $\sum_{i=0}^3
(\nabla\varphi)_i X_i$. 

\begin{prop}[Equivalence of fixed-point equations]
\label{prop:fixed_point_equiv}
Assume that $\eps>0$ and that~\eqref{eq:fix01} admits a fixed point $(U,V)$
where $U, V\in\cD^{\gamma,\eta}$ for some $\gamma, \eta\in\R$. Then $(u,v)=(\cR
U, \cR V)$ satisfies the fixed-point equation~\eqref{eq:fhn02}.
\end{prop}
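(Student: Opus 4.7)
The strategy is to apply the reconstruction operator $\cR$ to both lines of~\eqref{eq:fix01} and identify the result with~\eqref{eq:fhn02}, following the template of~\cite[Thm.~7.8]{Hairer2014} for the $u$ equation and using the new Schauder estimate~\eqref{eq:REf_KRf} for the $v$ equation. For the first equation, combining~\eqref{eq:K_gamma1} with~\eqref{eq:R_gamma2} yields
\begin{equation*}
\cR(\cK_{\bar\gamma} + R_\gamma\cR) g \;=\; (K+R)*\cR g \;=\; G*\cR g
\end{equation*}
for every admissible $g$. Since for $\eps>0$ the model $Z^\eps$ is built from the smooth mollified noise $\xi^\eps$ via the multiplicative rule $\Pi^\eps_z(\tau_1\tau_2)(\bar z)=(\Pi^\eps_z\tau_1)(\bar z)(\Pi^\eps_z\tau_2)(\bar z)$, the reconstruction operator commutes with pointwise products: $\cR(fg)=(\cR f)(\cR g)$ whenever the abstract product lies in some $\cD^{\gamma',\eta'}$ and both sides are continuous functions (which is the case here thanks to the structural form~\eqref{eq:fix03}). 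In particular, $\cR\Xi=\xi^\eps$ and $\cR F(U,V) = F(\cR U,\cR V) = F(u,v)$, so that applying $\cR$ to the first line of~\eqref{eq:fix01} gives
\begin{equation*}
u \;=\; G*\bigbrak{\Rplus(\xi^\eps + F(u,v))} + Gu_0\;,
\end{equation*}
which is exactly the first line of~\eqref{eq:fhn02} since $(\cR Gu_0)(t,x)=(S(t)u_0)(x)$ by construction of the lift.

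For the second equation, the decomposition $U = U_- + U_+$ of~\eqref{eq:fix01a} and identity~\eqref{eq:REf_KRf} of Proposition~\ref{prop:Schauder_E} give
\begin{equation*}
(\cR \cK^Q_\gamma \Rplus U)(t,x) \;=\; \int_{t-2T}^{t} Q(t-s)\,(\Rplus u)(s,x)\,\6s \;=\; \int_0^{t} Q(t-s)\,u(s,x)\,\6s\;,
\end{equation*}
while the lift of $\widehat Q v_0$ reconstructs to $\e^{tA_2}v_0\indexfct{t>0}$. Recalling that on the time horizon $[0,T]$ the truncated kernel satisfies $Q(t-s) = \e^{(t-s)A_2}A_1$, this gives precisely the second line of~\eqref{eq:fhn02}.

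The main obstacle is the rigorous justification that $\cR$ commutes with the nonlinear operation, i.e.\ that $\cR F(U,V) = F(\cR U,\cR V)$ for the cubic $F$ of~\eqref{eq:fix02}, and that the multiplication by the indicator $\Rplus$ is compatible with reconstruction. This requires checking that the abstract products $U^3,\,U^2V,\,UV^2,\,V^3$ belong to the appropriate $\cD^{\gamma',\eta'}$ spaces (which must be verified in the course of setting up the fixed-point argument in Section~\ref{sec_fix}), together with uniqueness of the reconstruction on positive-$\gamma$ spaces so that multiplicativity of $\Pi^\eps$ at the level of the model transfers to the reconstructed functions. Since $\xi^\eps$ is smooth for $\eps>0$, these reconstructions are genuine functions and the identities above reduce to standard manipulations of convolutions, so once the domain issues are resolved the proof is a direct diagram chase.
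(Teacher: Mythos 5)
Your proposal takes essentially the same route as the paper: apply $\cR$ to both lines of~\eqref{eq:fix01}, use $\cR(\cK_{\bar\gamma}+R_\gamma\cR)=G*\cR$ for the first equation and~\eqref{eq:REf_KRf} for the second, and reduce everything to the identity $\cR F(U,V)=F(\cR U,\cR V)$. You correctly flag this last identity as \emph{the} nontrivial point, which it is. However, your account of why it holds is too loose in a place where the paper's proof has all of its content, and the phrasing you use suggests you have not quite located the mechanism.

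The issue is your assertion that ``the reconstruction operator commutes with pointwise products $\ldots$ whenever the abstract product lies in some $\cD^{\gamma',\eta'}$ and both sides are continuous functions.'' Multiplicativity of $\Pi^\eps_z$ on the model together with $(\cR f)(z)=(\Pi^\eps_z f(z))(z)$ (for $\gamma>0$) gives $(\cR U)(z)^3 = (\Pi^\eps_z\,U(z)^3)(z)$ where $U(z)^3$ is the \emph{untruncated} product in $T$. But the abstract $U^3\in\cD^{\bar\gamma,\bar\eta}$ is the \emph{truncated} product, and $\cR U^3(z)=(\Pi^\eps_z\,\mathcal{Q}_{<\bar\gamma}(U(z)^3))(z)$. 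The two can only agree if the tail $(\Pi^\eps_z\,\mathcal{Q}_{\geqs\bar\gamma}(U(z)^3))(z)$ vanishes, which is not a consequence of ``domains being resolved''; it is a structural property of the canonical model. Concretely, it relies on the fact that for any $\tau$ with $\abss{\tau}>-2$, \eqref{eq:Pi_xI} and~\eqref{eq:f_x} force $(\Pi^\eps_z\cI\tau)(z)=0$ because the $\ell=0$ term of~\eqref{eq:f_x} exactly cancels the convolution at $\bar z=z$. This is the paper's calculation showing $(\Pi^\eps_{\bar z}\RSY)(z)$ vanishes at $z=\bar z$, whence $(\Pi^\eps_z\RSWV)(z)=(\Pi^\eps_z\RSY)(z)(\Pi^\eps_z\RSV)(z)=0$, together with $(\Pi^\eps_z X)(z)=0$, and analogously for $\RSVW$, $\RSWW$ and the $\RSI\to\RSoI$ variants. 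Without this vanishing argument your claimed product rule for $\cR$ is an unproved assertion rather than a ``direct diagram chase.'' The rest of your proposal — the use of~\eqref{eq:K_gamma1}, \eqref{eq:R_gamma2}, and \eqref{eq:REf_KRf}, the treatment of the initial-condition lifts, and the commutation with $\Rplus$ — matches the paper.
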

\begin{proof}
The first step is to note that~\eqref{eq:Pi_xI} and~\eqref{eq:E01} imply
\begin{align*}
 (\Pi_{\bar z}^\eps \RSI)(z) &= 
 \int K(z-z_1)\xi^\eps(z_1)\6z_1 =: \chi_\eps(z)\;, \\
 (\Pi_{\bar z}^\eps \RSoI)(z) &= 
 \int \KQ(z-z_1)\xi^\eps(z_1)\6z_1 =: \chi^Q_\eps(z)\;, 
\end{align*} 
independently of $\bar z$, where we have used the notation 
\[
 \KQ(t,x) = \int_0^{2T} K(t-s,x) Q(s)\6s\;.
\]
Note that both in dimensions $d=3$ and $d=2$, the sum in~\eqref{eq:Pi_xI}
is empty, because in both cases $\RSI$ has strictly negative homogeneity,
cf.\ Table~\ref{tab:FF_AllenCahn}.
Applying the reconstruction operator $\cR$ to~\eqref{eq:fix03}, since all
functions are smooth for $\eps>0$, we obtain
\begin{align*}
u(z) &= \chi_\eps(z) + \varphi(z)\;, \\
v(z) &= \chi^Q_\eps(z) + \psi(z)\;. 
\end{align*}
Here we have used the fact that if $f_+\in\cD^{\gamma,\eta}$ has only components
of strictly positive homogeneity, one has $\cR f_+(z) = (\Pi^\eps_z f_+(z))(z) =
0$ (cf.~\eqref{eq:E_05} as well as~\cite[Prop.~3.28]{Hairer2014}). 
On the other hand, using~\eqref{eq:K_gamma1} and
\eqref{eq:R_gamma2}, we obtain for all $f\in\cD^{\gamma,\eta}$ 
\[
 \bigpar{\cR(\cK_{\bar\gamma} + R_\gamma\cR)f}(z) 
 = (G*\cR f)(z)\;.
\]
Applying $\cR$ to the equations~\eqref{eq:fix01} and using~\eqref{eq:REf_KRf},
we thus obtain 
\begin{align*}
u(t,x) &= G * \cR\Rplus \bigbrak{\Xi+F(U,V)}(t,x) + Gu_0(t,x)\;, \\
v(t,x) &= \int_0^t (\cR\Rplus U)(s,x) Q(t-s)\6s + \widehat Qv_0(t,x)\;.  
\end{align*}
It is not difficult to check that $\cR$ and $\Rplus$ commute. Thus the result
follows if we are able to show that 
\[
 \cR F(U,V) = F(\cR U, \cR V)\;.
\]
Computing the part of nonpositive homogeneity of the difference $\cR F(U,V) =
F(\cR U, \cR V)$, we see that it is a linear combination of terms of the form 
\[
 (\Pi^\eps_z \RSWV)(z)\;, \quad
 (\Pi^\eps_z \RSVW)(z)\;, \quad
 (\Pi^\eps_z \RSWW)(z) \quad
 \text{and} \quad 
 (\Pi^\eps_z \RSI)(z)^2 \pscal{\nabla\varphi}{(\Pi^\eps_z X)(z)}\;,
\]
and all possible terms obtained from these by substitutions of the form
$\RSI\mapsto\RSoI$. It follows directly from~\eqref{eq:Pi_x} that 
$(\Pi^\eps_z X)(z) = 0$. It thus remains to show that all terms of the three
other types vanish as well. Applying~\eqref{eq:Pi_xI}, \eqref{eq:f_x}, and 
recalling \eqref{eq:Jiszero} we obtain 
\[
(\Pi^\eps_{\bar z} \RSY)(z) = 
\int \bigbrak{K(z-z_1)-K(\bar z-z_1)}(\Pi^\eps_{\bar z} \RSV)(z_1)\6z_1\;, 
\]
which vanishes in $z=\bar z$. It follows that 
$(\Pi^\eps_z \RSWV)(z) = (\Pi^\eps_z \RSY)(z)(\Pi^\eps_z \RSV)(z) = 0$. 
All other terms can be treated similarly, and the result follows. 
\end{proof}

\begin{remark}
\label{rem:d=2}
In dimension $d=2$, the symbols $\RSWV, \RSVW, \RSWW$ have strictly positive
homogeneity, so that the last part of the proof is not needed. 
\end{remark}

The expansion~\eqref{eq:fix03} shows in particular that both $U$ and $V$ have
regularity $-\frac12-\kappa$, and that their components of negative homogeneity
do not depend on $(t,x)$. We will thus look for a fixed point
of~\eqref{eq:fix03} in a space $\cD^{\gamma,\eta}_0$ for appropriate values of
$\gamma$ and $\eta$.


\subsection{Local existence and uniqueness of the fixed point}
\label{ssec_fix_exist}

In order to describe the effect of $F$, it will be useful to introduce the
following slight generalisation of Definition~\ref{def_Dgamma0}. Given a model
$(\Pi,\Gamma)$, the space $\cD^{\gamma,\eta}_{\beta,\alpha}(\Gamma)$ is the
space of functions $f \in \cD^{\gamma,\eta}(\Gamma)$ of the form 
\begin{equation}
 \label{eq:fex01}
 f(z) = \sum_{\tau\in T\colon \alpha \leqs \abss{\tau} < \beta} c_\tau \tau +
\sum_{\tau\in T\colon\abss{\tau}\geqs\beta} \hat c_\tau(z)\tau \;,
\end{equation}
where the $c_\tau$ do not depend on $z$. 

The following result establishes the strong local Lipschitz continuity of $F$
in the sense of~\cite[Section~7.3]{Hairer2014}. 

\begin{prop}[Strong local Lipschitz continuity]
\label{prop:f_Lipschitz}
Fix $\alpha < 0 < \gamma$ and set $\bar\gamma=\gamma+2\alpha$ and 
$\bar\eta = 3\eta \wedge(\eta+2\alpha)$. Let $\fraK\subset\R^{d+1}$ be
compact, and assume $U,V\in\cD^{\gamma,\eta}_{0,\alpha}(\Gamma)$ are such that
\begin{equation}
 \label{eq:fex02}
 \normDgamma{U}_{\gamma,\eta;\fraK} + \normDgamma{V}_{\gamma,\eta;\fraK} 
 \leqs R\;.
\end{equation} 
Then $F(U,V)\in\cD^{\bar\gamma,\bar\eta}_{2\alpha,3\alpha}(\Gamma)$, and there
exists a constant $C(R)$ such that 
\begin{equation}
 \label{eq:fex03}
 \normDgamma{F(U,V)}_{\bar\gamma,\bar\eta;\fraK} \leqs C(R)\;.
\end{equation} 
Furthermore, if $(\Pibar,\Gammabar)$ is a possibly different model and
$\overline U,\overline V\in\cD^{\gamma,\eta}_{0,\alpha}(\Gammabar)$ satisfy 
the bound $\normDgamma{\overline U}_{\gamma,\eta;\fraK} +
\normDgamma{\overline V}_{\gamma,\eta;\fraK} \leqs R$, then
\begin{equation}
 \label{eq:fex04}
 \seminormff{F(U,V)}{F(\overline U,\overline V)}_{\bar\gamma,\bar\eta;\fraK}
\leqs 
C(R) \Bigbrak{\seminormff{U}{\overline U}_{\gamma,\eta;\fraK} +
\seminormff{V}{\overline V}_{\gamma,\eta;\fraK}
+ \norm{\Gamma - \Gammabar}_{2\gamma+\alpha;\fraK}}\;.
\end{equation} 
\end{prop}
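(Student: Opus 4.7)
The plan is to reduce the claim to iterated applications of the product theorem for weighted H\"older-type spaces, essentially Proposition~6.12 of~\cite{Hairer2014}, adapted to our extended regularity structure. The operator $\cE$ preserves homogeneity and multiplication in the model space is defined pointwise, so nothing essentially new is required at the algebraic level: it is enough to write $F(U,V)$ as a linear combination of the nine monomials $U^iV^j$ with $1\leqs i+j\leqs 3$ and to treat each of them separately.

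For two factors $f_1,f_2\in\cD^{\gamma,\eta}_{0,\alpha}(\Gamma)$, the product theorem gives $f_1f_2\in\cD^{\gamma+\alpha,(2\eta)\wedge(\eta+\alpha)}$ with minimum homogeneity~$2\alpha$. Iterating once more, the cubic products $f_1f_2f_3$ lie in $\cD^{\gamma+2\alpha,\,3\eta\wedge(2\eta+\alpha)\wedge(\eta+2\alpha)}$ with minimum homogeneity~$3\alpha$. Because $\alpha<0$ and $\eta<0$ one checks that $2\eta+\alpha$ always lies between $3\eta$ and $\eta+2\alpha$, so the weight collapses to $\bar\eta=3\eta\wedge(\eta+2\alpha)$, as stated. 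Taking the intersection of the resulting spaces over all monomials (the linear and quadratic ones give larger $\gamma$ and $\eta$ and hence contain the cubic space) yields $F(U,V)\in\cD^{\bar\gamma,\bar\eta}(\Gamma)$ with $\bar\gamma=\gamma+2\alpha$, and the norm bound~\eqref{eq:fex03} follows from the polynomial dependence of the product estimates on the norms of the factors.

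To verify the subscript $\cD^{\bar\gamma,\bar\eta}_{2\alpha,3\alpha}(\Gamma)$, I would note that any component of $F(U,V)$ with homogeneity strictly below~$2\alpha$ must arise from a product of three factors each of strictly negative homogeneity, since a single positive-homogeneity factor contributes at least~$0$ and forces the total to be $\geqs 2\alpha$. Such triple products involve only the $z$-independent coefficients $c_\tau$ from the decomposition~\eqref{eq:fex01}, so the range $[3\alpha,2\alpha)$ is populated exclusively by constant-in-$z$ terms, as required.

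For the Lipschitz bound~\eqref{eq:fex04}, I would apply the telescoping identities
\[
U^3-\overline U^3=(U-\overline U)U^2+\overline U(U-\overline U)U+\overline U^2(U-\overline U)
\]
and their bilinear analogues for the mixed monomials $U^2V$, $UV^2$, together with the Lipschitz version of the product theorem, which controls $\seminormff{f_1f_2}{\bar f_1\bar f_2}$ in terms of $\seminormff{f_i}{\bar f_i}_{\gamma,\eta;\fraK}$, the individual norms, and $\norm{\Gamma-\Gammabar}_{2\gamma+\alpha;\fraK}$. Summing the nine contributions yields~\eqref{eq:fex04}. The main obstacle is the careful bookkeeping: while each individual step is a direct application of the product theorem, ensuring that the $\gamma$- and $\eta$-minima collapse simultaneously to $\bar\gamma$ and $\bar\eta$ for every monomial, and keeping track of the three distinct error sources ($U-\overline U$, $V-\overline V$, and $\Gamma-\Gammabar$) through three successive multiplications, requires careful verification that the cubic monomials are indeed the binding constraint, which uses $\alpha<0$ essentially.
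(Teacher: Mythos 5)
Your proposal is essentially the paper's proof: both decompose $F$ into its monomials, split $U$, $V$ into negative- and positive-homogeneous parts, and iterate the multiplication estimate of~\cite[Prop.~6.12]{Hairer2014}, with the cubic monomials giving the binding constraints on $(\bar\gamma,\bar\eta)$ and on the subscripts $(2\alpha,3\alpha)$; your observations that $2\eta+\alpha$ is the midpoint of $3\eta$ and $\eta+2\alpha$ (collapsing the $\eta$-minimum) and that homogeneities below $2\alpha$ can only arise from triple products of $z$-independent negative parts spell out details the paper leaves implicit. One caution: the telescoping identity you write for $U^3-\overline U^3$ is not literally available in this framework, since $U\in\cD^{\gamma,\eta}_{0,\alpha}(\Gamma)$ and $\overline U\in\cD^{\gamma,\eta}_{0,\alpha}(\Gammabar)$ are modelled over \emph{different} models, so mixed products such as $\overline U\,(U-\overline U)\,U$ do not live in any single $\cD^\gamma$-space, and the quantity $\seminormff{\cdot}{\cdot}_{\gamma,\eta;\fraK}$ is in general not a function of $U-\overline U$ (cf.\ the remark after~\eqref{eq:D_gamma_seminorm}); the correct tool, which you also cite and which is what the paper iterates, is the cross-model Lipschitz estimate in~\cite[Prop.~6.12]{Hairer2014} applied directly to $\seminormff{f_1f_2}{\bar f_1\bar f_2}$.
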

\begin{proof}
Writing $U(z)=U_-+U_+(z)$ where $U_-$ contains all terms of negative
homogeneity and applying~\cite[Prop.~6.12]{Hairer2014}, we see that 
\begin{alignat*}{3}
U(z)^2 &= U_-^2 + 2 U_- U_+(z) + U_+(z)^2 &&\in
\cD^{\gamma+\alpha,2\eta\wedge(\eta+\alpha)}_{\alpha,2\alpha}\;, \\
U(z)^3 &= U_-^3 + 3 U_-^2 U_+(z) + 3 U_- U(z)^2 + U_+(z)^3 &&\in 
\cD^{\gamma+2\alpha,3\eta\wedge(\eta+2\alpha)}_{2\alpha,3\alpha}\;,
\end{alignat*}
where the multiplication of elements in $\cD^{\gamma,\eta}_{\cdot,\cdot}$ is 
well-defined due to the results in \cite[Sec.~4]{Hairer2014}. 
Similar relations hold for powers of $V$ and cross-terms,
proving~\eqref{eq:fex03}. The proof of~\eqref{eq:fex04} is similar, using the
bound on $\seminormff{f}{g}$ given in~\cite[Prop.~6.12]{Hairer2014}.
\end{proof}

\begin{remark}
\label{rem:fix01} 
In cases where the two models $(\Pi,\Gamma)$ and $(\Pibar,\Gammabar)$ are
identical, \eqref{eq:fex04} automatically provides the bound
\begin{equation}
 \label{eq:fex_remark} 
\normDgamma{F(U,V)-F(\overline U,\overline V)}_{\bar\gamma,\bar\eta;\fraK}
\leqs C(R) \Bigbrak{\normDgamma{U-\overline U}_{\gamma,\eta;\fraK} +
\normDgamma{V-\overline V}_{\gamma,\eta;\fraK}}\;.
\end{equation} 
\end{remark}

In order to proceed, it is convenient to rewrite the fixed-point
equation~\eqref{eq:fix01} in a slightly different form. First we set 
\begin{equation}
 \label{eq:fix05}
W := (\cK_{\bar\gamma} + R_\gamma\cR)\Rplus \Xi\;.
\end{equation} 
For the time being, let us \emph{assume} that 
$W\in\cD^{\gamma,\eta}_{0,\alpha_0+2}$ for some $\gamma\in\R$ and all
$\eta < \alpha_0+2$ (recall that $\alpha_0 = -\frac52 - \kappa$ in dimension
$d=3$); we come back to this point in Remark~\ref{rem:fix02} below. Then
we define the map
\begin{equation}
 \label{eq:fix06}
 \cM \colon U \mapsto 
 (\cK_{\bar\gamma}+R_\gamma\cR)\Rplus F\bigpar{U,\cK^Q_\gamma\Rplus U+\widehat
Qv_0} + W + Gu_0\;. \\
\end{equation}
If $U$ is a fixed point of $\cM$ and $V=\cK^Q_\gamma\Rplus U+\widehat Qv_0$ then
$(U,V)$ is a fixed point of~\eqref{eq:fix01}. 

Following~\cite[Section~7.1]{Hairer2014}, we write $O=[-1,2]\times\R^3$ and
$O_T=(-\infty,T]\times\R^3$, and introduce the shorthand
$\normDgamma{\cdot}_{\gamma,\eta;T}$ for $\normDgamma{\cdot}_{\gamma,\eta;O_T}$,
and define $\seminormff{\cdot}{\cdot}_{\gamma,\eta; T}$ similarly. 
Finally, we use the notation $\seminormff{Z}{\bar Z}_{\gamma;\fraK} = 
\norm{\Pi-\Pibar}_{\gamma;\fraK} + \norm{\Gamma-\Gammabar}_{\gamma;\fraK}$ to
quantify the difference between two models.
The following result is an adaptation of~\cite[Thm.~7.8]{Hairer2014}. 

\begin{prop}[Existence and uniqueness of the fixed point]
\label{prop:fixed_point}
Assume $-\frac23 < \eta < -\frac12$ and $\alpha > -1$ are such that
$\eta+2\alpha > -2$, and assume $\gamma > -2\alpha$.  
For any $u_0, v_0, W$ such that $W+Gu_0, \widehat
Qv_0\in\cD^{\gamma,\eta}_{0,\alpha}$, there exists a time $T>0$ such that $\cM$
admits a unique fixed point
$U^*\in\cD^{\gamma,\eta}$ on $(0,T)$.
Furthermore, the solution map $\cS_T:(u_0,v_0,W,Z) \mapsto U^*$ is
jointly Lipschitz continuous in the sense that if $Z$ and $\bar Z$ are two
models and $(u_0,v_0,W)$ and $(\bar u_0,\bar v_0,\overline W)$ are two sets of
initial conditions and forcing terms such that 
\begin{equation}
 \label{eq:prop_fixed_point01}
 \seminormff{Gu_0}{G\bar u_0}_{\gamma,\eta; T} +  
 \seminormff{\widehat Q v_0}{\widehat Q \bar v_0}_{\gamma,\eta; T} + 
 \seminormff{W}{\overline W}_{\gamma,\eta;T} + 
 \seminormff{Z}{\bar Z}_{2\gamma+\alpha;O} \leqs \delta\;,
\end{equation} 
then one has
\begin{equation}
 \label{eq:prop_fixed_point02}
 \seminormff{U^*}{\overline U^*}_{\gamma,\eta;T} 
 \leqs C_0 \delta
\end{equation} 
for some constant $C_0>0$. 
\end{prop}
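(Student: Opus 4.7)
The plan is to apply the Banach fixed-point theorem to $\cM$ on a closed ball in the restriction of $\cD^{\gamma,\eta}_{0,\alpha}$ to $O_T=(-\infty,T]\times\R^3$, for $T$ sufficiently small. Three ingredients are needed: (i) self-mapping of such a ball, (ii) contractivity on short time intervals, and (iii) joint Lipschitz dependence on the data $(u_0,v_0,W,Z)$.

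\textbf{Self-mapping.} Given $U\in\cD^{\gamma,\eta}_{0,\alpha}$, I would first observe that $V:=\cK^Q_\gamma\Rplus U+\widehat Q v_0$ again lies in $\cD^{\gamma,\eta}_{0,\alpha}$: the multilevel Schauder estimate of Proposition~\ref{prop:Schauder_D_gamma_eta} applied to $\cK^Q_\gamma$ preserves both $\gamma$ and $\eta$, and since $\cE$ preserves homogeneity and commutes with $\Gamma$ on the negative-homogeneity sector, the constant-in-$z$ structure of $V_-$ is retained. By the choice of $\alpha$ and the conditions $-\tfrac23<\eta<-\tfrac12$, $\alpha>-1$ and $\eta+2\alpha>-2$, Proposition~\ref{prop:f_Lipschitz} then places $F(U,V)$ in $\cD^{\bar\gamma,\bar\eta}_{2\alpha,3\alpha}$ with $\bar\eta=3\eta\wedge(\eta+2\alpha)>-2$ and $\bar\gamma=\gamma+2\alpha>0$. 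Applying the standard multilevel Schauder estimates~\cite[Thm.~5.12]{Hairer2014} and Proposition~6.16 of~\cite{Hairer2014} to $(\cK_{\bar\gamma}+R_\gamma\cR)\Rplus F(U,V)$ yields an element of $\cD^{\bar\gamma+2,(\bar\eta\wedge\alpha_0)+2}_{0,\alpha}$, which contains $\cD^{\gamma,\eta}_{0,\alpha}$ on the relevant range of homogeneities. Adding $W+Gu_0$, which by assumption lies in the same space, closes the self-mapping step.

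\textbf{Short-time contractivity.} The crucial observation is that restricting all objects to $O_T$, the presence of the factor $\Rplus$ makes the convolutions $\cK_{\bar\gamma}\Rplus(\cdot)$ and $\cK^Q_\gamma\Rplus(\cdot)$ gain a positive power $T^\theta$ of the time horizon on the relevant $\cD^{\gamma,\eta}$ seminorms, exactly as in~\cite[Prop.~7.11, Thm.~7.8]{Hairer2014}; the analog of this gain for the kernel $Q$ is a direct consequence of the bound~\eqref{eq:Schauder_D_gamma_eta} applied on the shrinking set $\fraKbar\subset(-\infty,T+2T]\times\R^3$, combined with the $f_+(t,x)=0$ for $t<0$ property inherited from $\Rplus$. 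Composing this with the strong local Lipschitz bound~\eqref{eq:fex04} for $F$ (applied with the same model so that the model-difference term vanishes) gives an estimate of the form $\seminormff{\cM(U)}{\cM(\bar U)}_{\gamma,\eta;T}\leqs C(R)\,T^\theta\,\normDgamma{U-\bar U}_{\gamma,\eta;T}$ inside any ball of radius $R$. Choosing first $R$ to absorb the data-dependent part and then $T$ small enough to make $C(R)T^\theta<\tfrac12$ yields both invariance of the ball and contractivity, hence a unique fixed point $U^\star$ by Banach.

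\textbf{Joint Lipschitz continuity.} For the bound~\eqref{eq:prop_fixed_point02}, I would subtract the fixed-point equations for $U^\star$ and $\overline U^\star$, writing
\begin{equation*}
U^\star-\overline U^\star=\bigbrak{\cK_{\bar\gamma}+R_\gamma\cR}\Rplus\bigbrak{F(U^\star,V^\star)-\bar F(\overline U^\star,\overline V^\star)}+(W-\overline W)+(Gu_0-G\bar u_0),
\end{equation*}
where $\bar F$ indicates that the map $F$ is computed with respect to the model $\bar Z$. Splitting the bracket as a pure $F$-difference (with common model) plus a term measuring the change of model, and using~\eqref{eq:fex04} together with the Schauder bound~\eqref{eq:Schauder_D_gamma_eta2}, one obtains $\seminormff{U^\star}{\overline U^\star}_{\gamma,\eta;T}\leqs C(R)T^\theta\,\seminormff{U^\star}{\overline U^\star}_{\gamma,\eta;T}+C(R)\,\delta$. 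For $T$ chosen as above, the first term can be absorbed into the left-hand side, yielding~\eqref{eq:prop_fixed_point02}.

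\textbf{Main obstacle.} The delicate point is the short-time gain on the coupling through $V$. Since $\cK^Q_\gamma$ only convolves in time with a non-smoothing kernel $Q$, I must verify that the factor $T^\theta$ is still produced after inserting $V=\cK^Q_\gamma\Rplus U+\widehat Q v_0$ into the cubic $F$ and then applying the heat-kernel Schauder estimate. The key input is that the strictly-negative-homogeneity components of $V$ coincide with those of $\cE U_-$ (hence remain constant in $z$), while the positive-homogeneity part is controlled on $O_T$ by Proposition~\ref{prop:Schauder_D_gamma_eta}, so that after multiplying by the $\cO(1)$ symbols produced by $F$ and convolving with $K$ under the $\Rplus$ indicator, the required power of $T$ re-emerges. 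Once this is pinned down, all remaining steps follow the template of~\cite[Sec.~7]{Hairer2014}.
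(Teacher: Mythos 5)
Your proposal is correct and follows essentially the same route as the paper: Banach fixed point on a ball in $\cD^{\gamma,\eta}_{0,\alpha}$ over a short time horizon, with self-mapping and contractivity both obtained by combining Proposition~\ref{prop:Schauder_D_gamma_eta} for the $V$-coupling, Proposition~\ref{prop:f_Lipschitz} for $F$, and the $T^\theta$-gain from the heat-kernel Schauder estimates of \cite[Thm.~7.1, Lemma~7.3]{Hairer2014}, while the joint Lipschitz bound is obtained by the same triangle-inequality-plus-absorption argument. Your concluding remark on the ``main obstacle'' correctly identifies why the time-only kernel $Q$ poses no extra difficulty, namely that the negative-homogeneity part of $V$ is $z$-independent and the positive part is controlled by Proposition~\ref{prop:Schauder_D_gamma_eta}, which is exactly the point the paper relies on.
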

\begin{proof}
Given $R>0$ let $\cB(R) = \setsuch{f\in\cD^{\gamma,\eta}_{0,\alpha}}
{\normDgamma{f}_{\gamma,\eta;T}\leqs R}$ and pick $U\in\cB(R)$. 
By Proposition~\ref{prop:Schauder_D_gamma_eta},
we have 
\[
V := \cK^Q_\gamma U + \widehat Qv_0 
\in\cB(C_1'R+\normDgamma{\widehat Qv_0}_{\gamma,\eta;T}) 
\]
for a constant $C_1'$ depending only on $Q$ and on the model. 
Applying Proposition~\ref{prop:f_Lipschitz}, we see that 
$F(U,V)\in\cD^{\bar\gamma,\bar\eta}_{2\alpha,3\alpha}$ and satisfies 
\[
 \normDgamma{F(U,V)}_{\bar\gamma,\bar\eta;T}
 \leqs C\bigpar{R + C_1'R+\normDgamma{\widehat Qv_0}_{\gamma,\eta;T}}\;.
\]
The assumptions on $\gamma, \eta$ and $\alpha$ guarantee that $\bar\gamma>0$ and
$\bar\eta>-2$. We can thus apply Theorem~7.1 and Lemma~7.3 in~\cite{Hairer2014}
to obtain the existence of constants $C_2, \kappa>0$ such that 
\[
 \normDgamma{\cM(U)}_{\gamma,\eta;T} \leqs C_2T^{\kappa/2}
C\bigpar{R + C_1'R+\normDgamma{\widehat Qv_0}_{\gamma,\eta;T}} +
\normDgamma{W}_{\gamma,\eta;T} + 
\normDgamma{Gu_0}_{\gamma,\eta;T}
\]
for every $T\in(0,1]$. Choosing 
\[
R=1+\bigbrak{\normDgamma{W}_{\gamma,\eta;T} +
\normDgamma{Gu_0}_{\gamma,\eta;T}} 
\]
and $T$ sufficiently small, we see that
$\cM$ maps the set $\cB(R)$ into itself. 

Next we show that $\cM$ is a contraction in $\cB(R)$. For
$U,\overline U\in\cB(R)$, $V$ as above and $\overline{V}=\cK^Q_\gamma \overline
U + \widehat Qv_0$, we have 
\[
 \cM(U) - \cM(\overline U) = 
 (\cK_{\bar\gamma}+R_\gamma\cR)\Rplus 
 \bigbrak{F(U,V) - F(\overline U,\overline V)}\;.
\]
Using again Theorem~7.1 and Lemma~7.3 in~\cite{Hairer2014} and
applying~\eqref{eq:fex_remark} to bound the term $\bigbrak{F(U,V) - F(\overline
U,\overline V)}$, we get 
\[
 \normDgamma{\cM(U) - \cM(\overline U)}_{\gamma,\eta;T} \leqs
C_2T^{\kappa/2} C\bigpar{R + C_1'R+\normDgamma{\widehat Qv_0}_{\gamma,\eta;T}}
\bigbrak{\normDgamma{U - \overline U}_{\gamma,\eta;T}}\;.
\]
Taking again $T$ small enough, we obtain that $\cM$ is indeed a contraction, and
the existence of a unique fixed point in a ball in $\cD^{\gamma,\eta}$ follows
from Banach's fixed-point theorem.

Finally, in order to prove the bound~\eqref{eq:prop_fixed_point02},  
we set $U_1 = (\cK_{\bar\gamma}+R_\gamma\cR) \Rplus F(U,V)$ and $\overline
U_1 = (\cK_{\bar\gamma}+R_\gamma\cR) \Rplus F(\overline U,\overline V)$. 
Using the inequality 
\[
\seminormff{f+g}{\bar f+\bar g}_{\gamma,\eta;T} \leqs
\seminormff{f}{\bar f}_{\gamma,\eta;T} + \seminormff{g}{\bar g}_{\gamma,\eta;T}
\] 
we obtain
\[
 \seminormff{\cM(U)}{\cM(\overline U)}_{\gamma,\eta;T} 
 \leqs \seminormff{U_1}{\overline U_1}_{\gamma,\eta;T} 
 + \seminormff{W}{\overline W}_{\gamma,\eta;T} 
 + \seminormff{Gu_0}{G\bar u_0}_{\gamma,\eta;T}\;,
\]
where the first term on the right-hand side can be estimated with the help
of~\eqref{eq:fex04}. The required bound thus follows from~\cite[Thm.~7.1 and
Lemma~7.3]{Hairer2014}.
\end{proof}

\begin{remark}
\label{rem:fix02}
In fact, Proposition~\ref{prop:fixed_point} can be applied as soon as we choose
$\gamma > 1$. Indeed, let $\alpha=-\frac12-\kappa$ and assume $\gamma = 1+\bar
\kappa$ for some $\bar\kappa>0$. 
\begin{enum}
\item 	The conditions on $\gamma$ and $\eta$ are satisfied provided $\kappa <
\bar \kappa$ and $\kappa \leqs \frac16$. 

\item 	Lemma~7.5 in~\cite{Hairer2014} shows that if $u_0\in\cC^\eta$ (with the
Euclidean scaling and recalling that we take $-\frac23 < \eta < -\frac12$) then 
$Gu_0\in\cD^{\gamma,\eta}$ for any $\gamma > (\eta\vee0)$.

\item 	Definition~2.14 in~\cite{Hairer2014} shows that if $v_0\in\cC^\gamma$,
then it can be lifted to a an element $v_0\in\cD^\gamma$, and then $\widehat Q
v_0$ will belong to $\cD^{\gamma,\eta}$ as required. 

\item 	As discussed in Section~9.4 of~\cite{Hairer2014} (see Equation (9.18)
and below), the assumption $W\in\cD^{\gamma,\eta}_{0,\alpha_0+2}$ is indeed
satisfied for $\gamma, \eta$ as in Proposition~\ref{prop:fixed_point}, provided
we define $\cR\Rplus\Xi$ as the distribution $\xi^\eps\indexfct{t\geqs0}$.~$\lozenge$  
\end{enum}
\end{remark}

Let $Z^\eps = \Psi(\xi^\eps)$ denote the canonical model for mollified noise
$\xi^\eps$ constructed in Sections~\ref{ssec_model} and~\ref{ssec_ext}. In
exactly the same way as in~\cite[Prop.~9.8 and 9.10]{Hairer2014}, we can now
extend the solution map $\cS_T$ to a maximal solution map $\cS^L$, defined until
the first time $\norm{(\cR U)(t,\cdot)}_\eta + \norm{(\cR V)(t,\cdot)}_\eta$
reaches a (large) cut-off value $L$. This map satisfies 
\begin{equation}
\label{eq:fix07} 
 \cR \cS^L(u_0,v_0,Z^\eps) = \bar\cS^L(u_0,v_0,\xi^\eps)\;,
\end{equation}
where $\bar\cS^L$ is the solution map of the original SPDE--ODE
system~\eqref{eq:fhn01} with mollified noise $\xi^\eps$. In other words, the
following diagram commutes: 

\begin{center}
\begin{tikzpicture}[>=stealth']
\matrix (m) [matrix of math nodes, row sep=3em,
column sep=3.5em, text height=1.75ex, text depth=0.5ex]
{ (u_0,v_0,Z^\eps) & (U^*,V^*) \\
 (u_0,v_0,\xi^\eps) & (u^\eps,v^\eps) \\};
\path[|->]
(m-1-1) edge node[auto] {$\cS^L$} (m-1-2)
(m-2-1) edge node[auto] {$\Psi$} (m-1-1)
(m-1-2) edge node[auto] {$\cR$} (m-2-2)
(m-2-1) edge node[below] {$\bar\cS^L$} (m-2-2);
\end{tikzpicture}
\end{center}


\section{Renormalisation of the equation}
\label{sec_renorm}


\subsection{The renormalisation group}
\label{ssec_renorm_gen}

Consider a general model $Z=(\Pi,\Gamma)$ for our regularity structure, which is
admissible in the sense that is satisfies~\eqref{eq:Pi_xI}, the last identity
in~\eqref{eq:f_x} and~\eqref{eq:E01}. The model may also be specified by
the pair $(\bPi,f)$ such that for all $z, \bar z\in\R^{d+1}$, one has 
\begin{equation}
 \label{eq:rgen01}
 \Pi_z = \bPi \,\Gamma_{f_z}
 \qquad \text{and} \qquad
 \Gamma_{z\bar z} = 
 (\Gamma_{f_z})^{-1} \Gamma_{f_{\bar z}}
 =: \Gamma_{\gamma_{z\bar z}}\;.
\end{equation} 
To build a renormalisation transformation, one first needs to define sets
$\cF_\star\subset\cF_0\subset\cF_F$ such that 
\begin{itemiz}
\item 	$\cF_0$ contains all $\tau\in\cF_F$ of strictly negative homogeneity;
\item 	for all $\tau\in\cF_0$, one has $\Delta\tau\in
\vspan(\cF_0)\otimes\vspan(\cF_0^+)$, where
$\cF_0^+\subset\cF_F^+$ contains all symbols of the
form~\eqref{eq:rs09} with $\tau_j\in\cF_\star$ and $\abss{\cJ_{k_j}\tau_j}>0$
(including the empty product). 
\end{itemiz}
See~\cite[Sect.~8.3]{Hairer2014}, in particular Remark~8.37, for a proof
that it is always possible to find sets $\cF_\star$ and $\cF_0$ satisfying these
two properties.
For the Allen--Cahn model, a possible choice is 
\begin{align}
\nonumber
\cF_\star &= \set{\RSW,\RSV,\RSI}\;, \\
\cF_0 &= \set{\Xi,\RSW,\RSV,\RSI,\RSWW,\RSV
X_i,\RSVW,\RSWV,\unit,\RSWI,\RSY,\RSVI,X_i}\;.
\label{eq:rgen02} 
\end{align}
For FitzHugh--Nagumo type equations, we choose the same sets, enriched by all
symbols obtained by substitutions $\RSI \to \RSoI$. 

Let $\cH_0 = \vspan(\cF_0)$ and $\cH_0^+ = \vspan(\cF_0^+)$. By definition, a
\emph{renormalisation transformation} is a linear map $M:\cH_0\to\cH_0$ such
that $MX^k = X^k$, $M\cI\tau = \cI M\tau$ for all $\tau\in\cF_0$ such that
$\cI\tau\in\cF_0$ and  $M\cE\tau = \cE M\tau$ for all $\tau\in\cF_0$. With $M$,
we would like to associate another admissible model $(\bPi^M,f^M)$ by
setting 
\begin{equation}
 \label{eq:rgen03}
 \bPi^M\tau = \bPi M \tau
 \qquad
 \forall \tau\in\cF_0\;.
\end{equation} 
As shown in~\cite[Section~8.3]{Hairer2014}, this is indeed possible if one
is able to find linear maps $\Delta^M:\cH_0\to\cH_0\otimes\cH_0^+$, and $\hat
M:\cH_0^+\to\cH_0^+$ such that 
\begin{align}
\nonumber
\Pi^M_z \tau &= (\Pi_z\otimes f_z)\Delta^M \tau\;, \\
\label{eq:rgen04} 
\pscal{f^M_z}{\bar\tau} &= \pscal{f_z}{\hat M\bar\tau}
\end{align}
holds for all $\tau\in\cF_0$ and all $\bar\tau\in\cF_0^+$, where $\Pi^M_z$ and
$f^M_z$ satisfy relations analogous to~\eqref{eq:rgen01}. 
The model $(\bPi^M,f^M)$ is admissible provided the conditions 
\begin{align}
\nonumber
\hat M \cJ_k &= \cM(\cJ_k\otimes\Id)\Delta^M\;, \\
(\Id\otimes\cM)(\Delta\otimes\Id)\Delta^M &= (M\otimes\hat M)\Delta 
\label{eq:rgen04b} 
\end{align}
hold, where $\cM$ is the multiplication map, defined by
$\cM(\tau\otimes\sigma)=\tau\sigma$.
The map $\hat M$
should also be a multiplicative morphism leaving the $X^k$ invariant, that is,  
\begin{equation}
 \label{eq:rgen05}
 \Mhat(\tau_1\tau_2) = (\Mhat\tau_1)(\Mhat\tau_2)\;, 
 \qquad
 \Mhat(X^k) = X^k\;.
\end{equation}
It is shown in~\cite[Prop.~8.36]{Hairer2014} that given a linear map $M$, there
is a unique choice of maps $\Delta^M$ and $\hat M$ such
that~\eqref{eq:rgen04b} and~\eqref{eq:rgen05} hold, making $(\bPi^M,f^M)$ an
admissible model. In addition, the elements $\Gamma^M_{z\bar z}$ can be
built using a map $\hat\Delta^M:\cH_0^+\to\cH_0^+\otimes\cH_0^+$ such that  
\begin{equation}
\label{eq:rgen05b} 
\gamma^M_{z\bar z} = (\gamma_{z\bar z}\otimes f_{\bar z}) \hat\Delta^M\;.
\end{equation}

\begin{definition}[{\cite[Def.~8.41]{Hairer2014}}]
\label{def:renorm_group}
The \emph{renormalisation group} $\mathfrak{R}$ is the set of linear maps
$M:\cH_0\to\cH_0$ satisfying $M\cI=\cI M$, $M\cE=\cE M$, $M(X^k)=X^k$, and such
that for all $\tau\in\cF_0$ and all $\bar\tau\in\cF_0^+$ one has 
\begin{align}
\nonumber
\Delta^M\tau - \tau\otimes\unit 
&\in \bigoplus_{\beta>\alpha}(T_\beta\cap\vspan(\cF_0))\otimes T^+\;, \\
\label{eq:def_RG} 
\hat\Delta^M\bar\tau - \bar\tau\otimes\unit 
&\in \bigoplus_{\beta>\alpha}(T_\beta\cap\vspan(\cF_0^+))\otimes T^+\;,
\end{align}
for all $\tau\in T_\alpha$ and all $\bar\tau\in T_\alpha\cap \vspan(\cF_0^+)$.
\end{definition}

It is shown in~\cite[Lemma~8.43]{Hairer2014} that $\mathfrak{R}$ is indeed
a group. The main result~\cite[Thm.~8.44]{Hairer2014} states that given
$M\in\mathfrak{R}$, and an admissible model $(\bPi,f)$, the model
$(\bPi^M,f^M)$ constructed as above is indeed an admissible
model. In particular, it satisfies the required analytic bounds and behaves well
under the extension theorem~\cite[Thm.~5.14]{Hairer2014}. 

Let us now introduce a particular subgroup of $\mathfrak{R}$ suited to our
problem. In the case of the Allen--Cahn equation
(cf~\cite[Sections~9.2 and~10.5]{Hairer2014}), a two-parameter group of
transformations $M=\exp\set{-C_1L_1 -C_2L_2}$ is sufficient, where the
generators $L_1$ and $L_2$ are defined by the substitution rules 
\begin{equation}
 L_1: \RSV \mapsto \unit\;, 
 \qquad
 L_2: \RSWV \mapsto \unit\;. 
 \label{eq:rn_substitutions} 
\end{equation}
More precisely, $L_1\tau$ is in general a sum of terms, where each term
corresponds to one occurrence of $\RSV=\cI(\Xi)^2$ in
$\tau$. Thus for instance $L_1(\RSW) = 3\RSI$, because there are $3$ ways of
picking a pair of $\RSI$ in $\RSW$, and replacing any of these pairs by $\unit$
yields $\RSI$.

In our case, we may \emph{a priori} have to apply similar substitutions for all
elements obtained from $\RSV$ and $\RSWV$ by replacing one or several $\cI(\Xi)$
by $\cE\cI(\Xi)$. We thus introduce the sets of symbols 
\begin{align}
\nonumber
\cF_1 &= \set{\RSV,\RSVo,\RSVoo}\;, \\
\cF_2 &= \setsuch{\cI(\tau_1)\tau_2}{\tau_1,\tau_2\in\cF_1}\;.
\label{eq:rgen07} 
\end{align}
Note that $\cF_2$ has $9$ elements. This leads us to define 
a $12$-parameter subgroup of $\mathfrak{R}$ given by 
\begin{equation}
 \label{eq:rgen08}
 M = \exp\Bigset{-\sum_{\tau\in\cF_1}C_1(\tau)L_\tau
 -\sum_{\tau\in\cF_2}C_2(\tau)L_\tau}\;,
\end{equation} 
where $L_\tau$ denotes the substitution $\tau\mapsto\unit$, applied as
often as possible. The exponential is then defined by its formal series, which
is always finite because any application of $L_\tau$ strictly decreases the
number of occurrences of $\cI(\Xi)$ or $\cE(\cI(\Xi))$. In practice,
however, we will see that only $2$ of these parameters are really needed
to renormalise the model. 

We introduce the shorthands $C_1=C_1(\RSV)$, $C_1'=C_1(\RSVo)$ and
$C_1''=C_1(\RSVoo)$. The action of $M$ on a few elements that will occur in the
computations below is given in the following list:
\begin{subequations}
\label{eq:M-AC} 
\begin{align}
\label{eq:M-AC-I}
M(\RSI) &= \RSI\;, \\
\label{eq:M-AC-V}
M(\RSV) &= \RSV - C_1\unit\;, \\
\label{eq:M-AC-W}
M(\RSW) &= \RSW - 3C_1\RSI\;, \\
M(\RSWV) &= \RSWV - C_2(\RSWV)\unit - C_1\RSY\;, 
\label{eq:M-AC-WV} \\
M(\RSWW) &= \RSWW - 3C_2(\RSWV)\RSI - 3C_1\RSWI - C_1\RSIW + 3C_1^2\RSII\;.
\label{eq:M-AC-WW}
\end{align}
\end{subequations}
The action of $M$ on elements such as $\RSVo$ is obtained by obvious
substitutions, e.g.\ we have $M(\RSWoo) = \RSWoo - C_1''\RSI - 2C_1'\RSoI$. As
in~\cite[Section~9.2]{Hairer2014}, one can determine the map
$\Delta^M$ and thus, via~\eqref{eq:rgen04}, the renormalised model $\Pi^M$.
Again, we just list a few expressions that will play a r\^ole in the
computations below:
\begin{subequations}
\label{eq:PiM-AC} 
\begin{align}
\label{eq:PiM-AC-I} 
\Pi^M_z(\RSI) &= \Pi_z(\RSI)\;, \\
\label{eq:PiM-AC-V} 
\Pi^M_z(\RSV) &= \Pi_z(\RSI)^2 - C_1\;, \\
\label{eq:PiM-AC-W} 
\Pi^M_z(\RSW) &= \Pi_z(\RSI)^3 - 3C_1\Pi_z(\RSI)\;, \\
\label{eq:PiM-AC-Y} 
\Pi^M_z(\RSY) &= \Pi_z(\RSY)\;, \\
\label{eq:PiM-AC-WV} 
\Pi^M_z(\RSWV) &= \Pi_z(\RSY)\Pi^M_z(\RSV) - C_2(\RSWV)\;, \\
\label{eq:PiM-AC-WW} 
\Pi^M_z(\RSWW) &= \Pi^M_z(\RSIW)\Pi^M_z(\RSV) - 3 C_2(\RSWV) \Pi^M_z(\RSI) + 
3C_1 \Pi_z(\RSV X_i) \pscal{f_z}{\cJ_{e_i}(\RSI)}\;.
\end{align}
\end{subequations}
Similar relations with obvious substitutions hold for the elements
obtained by having $\cE$ act on some $\cI(\Xi)$.


\subsection{Convergence of the renormalised models}
\label{ssec_renorm_FHN}

From now on, we assume that the driving noise $\xi$ is Gaussian white noise on
$D=\R\times\T^d$, where $\T^d=\R^d/\Z^d$ is the $d$-dimensional torus .
This means that we are given a probability space $(\Omega,\cF,\fP)$, the Hilbert
space $H=L^2(D)$, and a collection of centred jointly Gaussian random
variables $\set{W_h}_{h\in H}$ which are assumed to satisfy 
\begin{equation}
 \label{eq:white_noise}
 \expec{W_hW_{\bar h}} = \pscal{h}{\bar h}
\end{equation} 
for all $h, \bar h\in H$. 
Then $\xi$ is the distribution defined by
$\pscal{\xi}{\varphi}=W_{\pi\varphi}$ for every test function $\varphi$, where
$\pi$ is the canonical projection from $L^2(\R^{d+1})$ to $L^2(D)$.

Let $Z^\eps=(\Pi^\eps,\Gamma^\eps)$ be the canonical model built for mollified
noise $\xi^\eps$. Our aim is now to find a specific sequence of renormalisation
maps $M_\eps$ in $\mathfrak{R}$ such that the sequence of models $\widehat
Z^\eps=M_\eps Z^\eps$ defined by 
\begin{equation}
 \label{eq:rFHN01}
 \widehat\Pi_z^{(\eps)} = (\Pi_z^\eps)^{M_\eps}
\end{equation} 
converges in a suitable sense to a limiting model $\widehat Z$. 

The relevant result to achieve this is Theorem~10.7 in~\cite{Hairer2014}. It
states that if for all localised scaled test functions $\eta^\delta_z =
\cS^\delta_{\fraks,z}\eta$ and all $\tau\in\cF_- =
\setsuch{\tau\in\cF_F}{\abss{\tau}} < 0$, one can find random
variables $\widehat \Pi_z\tau$ such that
\begin{align}
\nonumber
\E\bigabs{\pscal{\widehat \Pi_z\tau}{\eta^\delta_z}}^2 
&\lesssim \delta^{2\abss{\tau}+\kappa}\;, \\
\E\bigabs{\pscal{\widehat \Pi_z\tau - \widehat
\Pi_z^{(\eps)}\tau}{\eta^\delta_z}}^2 
&\lesssim \eps^{2\theta}\delta^{2\abss{\tau}+\kappa}
 \label{eq:rFHN02} 
\end{align}
holds for some $\kappa, \theta>0$, then there exists a unique admissible model
$\widehat Z$ such that for all compact sets $\fraK\subset\R^{d+1}$ and all
$p\geqs 1$ one has 
\begin{equation}
\E \normDgamma{\widehat Z}^p_{\gamma;\fraK} \lesssim 1\;, 
\qquad 
\E \seminormff{\widehat Z}{\widehat Z^\eps}^p_{\gamma;\fraK} \lesssim
\eps^{\theta p}\;.
 \label{eq:rFHN03} 
\end{equation} 
Here $\normDgamma{Z}_{\gamma;\fraK} = \norm{\Pi}_{\gamma;\fraK} +
\norm{\Gamma}_{\gamma;\fraK}$, and $\seminormff{Z}{\Zbar}_{\gamma;\fraK}$ 
is defined in Section \ref{ssec_fix_exist}. 
In~\eqref{eq:rFHN02}, each random variable $\widehat \Pi_z\tau$ should in
addition belong to the inhomogeneous Wiener chaos of order $\norm{\tau}$, where
$\norm{\tau}$ is the number of occurrences of $\Xi$ in $\tau$
(cf~\cite[Section~10.1]{Hairer2014}). More precisely, Wiener's isometry
allows one to define linear maps $I_k:H^{\otimes k}\to L^2$, as explained
in~\cite[Section~10.1]{Hairer2014}. Then the Wiener chaos expansion of
$\Pihat_0^{(\eps)}\tau$ is obtained by finding distributions
$(\widehat\cW^{(\eps;k)}\tau)(z)\in H^{\otimes k}$ such that for every
test function $\varphi$, 
\begin{equation}
 \label{eq:rFHN04}
 \pscal{\Pihat_0^{(\eps)}\tau}{\varphi}
 = \sum_{k\leqs\norm{\tau}}
 I_k \biggpar{\int_{\R^{d+1}} \varphi(z)(\widehat\cW^{(\eps;k)}\tau)(z)\6z}\;.
\end{equation} 
Let further $\cW^{(\eps;k)}$ be the distributions defined in a similar way from
the bare model $\Pi^\eps$.
As usual, we will express~\eqref{eq:rFHN04} by the slightly more informal
notation 
\begin{equation}
 \label{eq:rFHN04b}
 (\Pihat_0^{(\eps)}\tau)(z)
 = \sum_{k\leqs\norm{\tau}}
 I_k \bigpar{(\widehat\cW^{(\eps;k)}\tau)(z)}\;.
\end{equation} 
Proposition~10.11 in~\cite{Hairer2014} provides conditions on the
$\widehat\cW^{(\eps;k)}$ from which the required conditions~\eqref{eq:rFHN02}
follow. Namely, for each $\tau\in\cF_-$, one should find functions
$\widehat\cW^{(k)}$ such that 
\begin{align}
\nonumber
\Bigabs{\bigpscal{(\widehat\cW^{(k)}\tau)(z)}{(\widehat\cW^{(k)}\tau)(\bar z)}}
&\leqs C \sum_{\lambda\in B_{k,\tau}} \bigpar{\norm{z}_{\fraks} +
\norm{\bar z}_{\fraks}}^\lambda \norm{z-\bar
z}_{\fraks}^{\bar\kappa+2\abss{\tau}-\lambda}\;, \\
\Bigabs{\bigpscal{(\delta\widehat\cW^{(\eps;k)}\tau)(z)}
{(\delta\widehat\cW^{(\eps;k)} \tau)(\bar z)}}
&\leqs C\eps^{2\theta} \sum_{\lambda\in B_{k,\tau}}
\bigpar{\norm{z}_{\fraks}
+ \norm{\bar z}_{\fraks}}^\lambda \norm{z-\bar
z}_{\fraks}^{\bar\kappa+2\abss{\tau}-\lambda}
\label{eq:rFHN04c}
\end{align}
for some $\bar\kappa,\theta>0$, where
$\delta\widehat\cW^{(\eps;k)}=\widehat\cW^{(\eps;k)}-\widehat\cW^{(k)}$ and 
each $B_{k,\tau}$ is a finite set of indices
$\lambda\in[0,2\abss{\tau} +
\kappa + \abs{\fraks})$, where $\abs{\fraks}=d+2=5$. Furthermore, the scalar
product in~\eqref{eq:rFHN04c} is the canonical scalar product on $H^{\otimes
k}$ induced by the scalar product on $H$.

In order to obtain such bounds for our system, we can to a large extent
adapt the proof of~\cite[Thm.~10.22]{Hairer2014}. As a matter of fact, it is
immediate to see that this theorem can be extended to our case, because
the involved kernels have singularities which are not worse than in the
Allen--Cahn case. The aim of the discussion that follows is thus only to
determine which parameters of the renormalisation group introduced above are
really needed to ensure convergence. 

The first step is to observe that if $\varrho_\eps$ is the mollifyer, i.e., if
$\xi^\eps=\varrho_\eps*\xi$, noting $K_\eps=K*\varrho_\eps$ we have 
\begin{equation}
 \label{eq:rFHN05}
 (\Pi_{\bar z}^\eps \RSI)(z) = (K_\eps * \xi)(z)
 = \int K_\eps(z-z_1)\xi(z_1)\6z_1 =: \chi_\eps(z)\;, 
\end{equation} 
which is independent of $\bar z$ and belongs to the first Wiener chaos 
with 
\begin{equation}
 \label{eq:rFHN06} 
 \bigpar{(\widehat\cW^{(\eps;1)}\RSI)(z)}(z_1) = K_\eps(z-z_1)\;.
\end{equation} 
As a consequence, we simply set 
\begin{equation}
 \label{eq:rFHN06a}
 (\widehat \Pi_{\bar z}^{(\eps)}\RSI)(z) = \chi_\eps(z)\;, 
 \qquad
 (\widehat \Pi_{\bar z}\RSI)(z) = \chi_0(z) := (K*\xi)(z)\;.
\end{equation} 
Due to translation invariance, it will henceforth be sufficient to evaluate
models in $\bar z=0$. As already seen in the proof of
Proposition~\ref{prop:fixed_point_equiv}, we have 
\begin{equation}
\label{eq:rFHN05b}
 (\Pi_0^\eps \RSoI)(z) = (\KQ * \xi^\eps)(z) 
 = \int \KQ(z-z_1)\xi^\eps(z_1)\6z_1  =: \chiQ_\eps(z)\;.
\end{equation} 
This can be rewritten in the form 
\begin{equation}
 \label{eq:rFHN05c} 
 \chiQ_\eps(z) = (\KQ_\eps * \xi)(z)
 = \int \KQ_\eps(z-z_1)\xi(z_1)\6z_1\;, 
\end{equation}
where we have set 
\begin{equation}
 \label{eq:rFHN05d}
 \KQ_\eps(t,x) = \int_0^{2T} Q(s)K_\eps(t-s,x)\6s\;.
\end{equation} 
It follows that we can set 
\begin{equation}
 \label{eq:rFHN05e}
 (\widehat \Pi_0^{(\eps)}\RSoI)(z) = \chiQ_\eps(z)\;, 
 \qquad
 (\widehat \Pi_0\RSoI)(z) = \chiQ_0(z) := (\KQ*\xi)(z)\;.
\end{equation}
The second step is to consider products of terms $\RSI$ and $\RSoI$. To this
end, we observe that by definition of the canonical model (see~\eqref{eq:Pi_x}),
\begin{equation}
 \label{eq:rFHN07}
 (\Pi_0^\eps \RSV)(z) = \chi_\eps(z)^2 
 = \iint K_\eps(z-z_1)K_\eps(z-z_2)\xi(z_1)\xi(z_2) \6z_1\6z_2\;,
\end{equation} 
which belongs to the second inhomogeneous Wiener chaos. Renormalisation is
needed because the right-hand side is known to diverge in the limit $\eps\to0$.
However, if one introduces the Wick product 
\begin{equation}
 \label{eq:rFHN08}
 \xi(z_1)\diamond\xi(z_2) = \xi(z_1)\xi(z_2) - \delta(z_1-z_2)\;, 
\end{equation} 
then one obtains 
\begin{equation}
 \label{eq:rFHN09}
 (\Pi_0^\eps \RSV)(z) 
 = \iint K_\eps(z-z_1)K_\eps(z-z_2)\xi(z_1)\diamond\xi(z_2) \6z_1\6z_2 
 + \int K_\eps(z-z_1)^2\6z_1\;.
\end{equation} 
The first term on the right-hand side belongs to the second homogeneous Wiener
chaos, with 
\begin{equation}
 \label{eq:rFHN10} 
 \bigpar{(\widehat\cW^{(\eps;2)}\RSV)(z)}(z_1,z_2) =
K_\eps(z-z_1)K_\eps(z-z_2)\;,
\end{equation} 
and is known to converge as $\eps\to0$. We can thus define 
\begin{equation}
 \label{eq:rFHN11}
 (\widehat\Pi_0^{(\eps)} \RSV)(z) 
 = \iint K_\eps(z-z_1)K_\eps(z-z_2)\xi(z_1)\diamond\xi(z_2) \6z_1\6z_2
 =: \chi_\eps^{\diamond 2}(z)\;,
\end{equation} 
which is indeed compatible with~\eqref{eq:PiM-AC-V},
provided we set 
\begin{equation}
 \label{eq:rFHN12}
 C_1(\eps) = \int K_\eps(z-z_1)^2\6z_1 = \int K_\eps(-z_1)^2\6z_1\;.
\end{equation} 
The limiting model $(\widehat\Pi_0 \RSV)(z)$ is then naturally defined by the
same expression as in~\eqref{eq:rFHN11}, but with $K_\eps$ replaced by $K$.

In order to deal with the term $\RSW$, the relevant Wick product formula is 
\begin{align}
\nonumber
\xi(z_1)\diamond\xi(z_2)\diamond\xi(z_3) 
 {}={} & \xi(z_1)\xi(z_2)\xi(z_3) \\
 & {}- \xi(z_1)\delta(z_2-z_3) - \xi(z_2)\delta(z_3-z_1) 
 - \xi(z_3)\delta(z_1-z_2)\;.
\label{eq:rFHN13} 
\end{align} 
In view of~\eqref{eq:PiM-AC-W}, this is indeed compatible
with the definition 
\begin{equation}
 \label{eq:rFHN14}
 (\widehat\Pi_0^{(\eps)} \RSW)(z) 
 = \iiint K_\eps(z-z_1)K_\eps(z-z_2)K_\eps(z-z_3)
 \xi(z_1)\diamond\xi(z_2)\diamond\xi(z_3)
 \6z_1\6z_2\6z_3\;,
\end{equation} 
without the need to introduce a further renormalisation constant. 

Before being able to deal with terms such as $\RSWV$, we have to compute the
model associated with $\RSY$. Applying~\eqref{eq:Pi_xI} (in which the sum only
contains the term $\ell=0$) and the last relation in~\eqref{eq:f_x}, we obtain 
\begin{align}
\nonumber
 (\Pi_0^\eps \RSY)(z)
 &= \int \bigbrak{K(z-z_1) - K(-z_1)} (\Pi_0^\eps \RSV)(z_1)\6z_1 \\
\nonumber
 &= \int \bigbrak{K(z-z_1) - K(-z_1)} \chi_\eps^{\diamond 2}(z_1)\6z_1 \\
 &=: (K*\chi_\eps^{\diamond 2})(z) - (K*\chi_\eps^{\diamond 2})(0)\;,
 \label{eq:rFHN15}
\end{align} 
where we have used the fact that the integrals of $K(z-\cdot)$ and $K$ are
equal to cancel out the term $C_1$. It thus follows from~\eqref{eq:PiM-AC-WV}
that 
\begin{equation}
 \label{eq:rFHN16}
 (\widehat\Pi_0^{(\eps)} \RSWV)(z)
 = \bigbrak{(K*\chi_\eps^{\diamond 2})(z) - (K*\chi_\eps^{\diamond 2})(0)}
 \chi_\eps^{\diamond 2}(z) - C_2(\RSWV)\;.
\end{equation} 
Similar expressions hold for the other terms of $\cF_2$.
In order to compute the Wiener chaos decomposition of expressions such
as~\eqref{eq:rFHN16}, we will need the identity 
\begin{align}
\nonumber
\bigpar{\xi(z_1)\diamond\xi(z_2)} \bigpar{\xi(z_3)\diamond\xi(z_4)} 
 {}={} & \xi(z_1)\diamond\xi(z_2)\diamond\xi(z_3)\diamond\xi_4 \\
\nonumber
 & {}+ \xi(z_1)\diamond\xi(z_3)\delta(z_2-z_4) 
 + \xi(z_1)\diamond\xi(z_4)\delta(z_2-z_3)\\
\nonumber
 & {}+ \xi(z_2)\diamond\xi(z_3)\delta(z_1-z_4)
 + \xi(z_2)\diamond\xi(z_4)\delta(z_1-z_3)\\ 
 & {}+  \delta(z_1-z_3)\delta(z_2-z_4) + \delta(z_1-z_4)\delta(z_2-z_3)\;,
\label{eq:rFHN18} 
\end{align} 
which follows from~\cite[Lemma~10.3]{Hairer2014} and the Wick rule 
$I_k(f)\diamond I_\ell(g) = I_{k+\ell}(f\otimes g)$. 
It turns out that the important term regarding renormalisation is the
contribution to the zeroth Wiener chaos, resulting from the last line
in~\eqref{eq:rFHN18}, which is given in the case $\tau=\RSWV$ by 
\begin{align}
\nonumber
(\widehat \cW^{(\eps;0)}\RSWV)(z) 
{}={} & 2\iiint K(z-\bar z)K_\eps(\bar z-z_1)K_\eps(\bar z-z_2)
K_\eps(z-z_1)K_\eps(z-z_2)\6\bar z\6z_1\6z_2 \\
\nonumber
& {}+ 2\iiint K(-\bar z)K_\eps(\bar z-z_1)K_\eps(\bar z-z_2)
K_\eps(z-z_1)K_\eps(z-z_2)\6\bar z\6z_1\6z_2\\
& {}- C_2(\RSWV)\;.
\label{eq:rFHN19} 
\end{align}
As shown in~\cite[Thm.~10.22]{Hairer2014}, only the first integral on the
right-hand side (which is independent of $z$ by translation invariance) diverges
as $\eps\to0$, which imposes a choice for $C_2(\RSWV)$. In order to represent
this quantity and related ones appearing when dealing with the other terms, we
introduce the notations 
\begin{align}
\nonumber
\cQ_0^\eps(z) &= \int K_\eps(z_1)K_\eps(z_1-z)\6z_1\;, \\
\nonumber
\cQ_1^\eps(z) &= \int K_\eps(z_1)\KQ_\eps(z_1-z)\6z_1\;, \\
\cQ_2^\eps(z) &= \int \KQ_\eps(z_1)\KQ_\eps(z_1-z)\6z_1\;.
\label{eq:rFHN20} 
\end{align}
In particular, we have 
\begin{equation}
C_2(\RSWV)  = 2 \int K(z)\cQ_0^\eps(z)^2\6z\;. 
\label{eq:rFHN21}
\end{equation} 
Before proceeding to the convergence result, we collect a number of bounds on
integrals involving the kernels $K_\eps$ and $\KQ_\eps$. In what follows we will
sometimes write $K_0$ in place of $K$ as well as $\KQ_0$ in place of $\KQ$,
while $\abs{x}=\abs{x_1}+\abs{x_2}+\abs{x_3}$ denotes the $\ell^1$-norm of
$x\in\R^3$, so that $\norm{(t,x)}_{\fraks} = \abs{t}^{1/2}+\abs{x}$,
cf~\eqref{eq:parabolic_norm}. Note however that $\abs{x}$ and the Euclidean norm
$\norm{x}$ are equivalent. 

\begin{lemma}
\label{lem:KKQ}
The following bounds hold for all $z=(t,x)\in\R^4$ and all $\eps\in[0,1]$:
\begin{equation}
 \label{eq:KKQ01}
 \abs{K_\eps(z)} \lesssim \frac{1}{(\norm{z}_\fraks+\eps)^3}\;, 
 \qquad
 \abs{\KQ_\eps(z)} \lesssim \frac{1}{\abs{x}+\eps}\;, 
\end{equation} 
and 
\begin{equation}
 \label{eq:KKQ02}
 \int K_\eps(z)^2\6z \lesssim \frac1\eps\;, \qquad
 \int K_\eps(z)\KQ_\eps(z)\6z \lesssim 1\;, \qquad
 \int \KQ_\eps(z)^2\6z \lesssim 1\;. 
\end{equation} 
Furthermore, one has 
\begin{equation}
 \label{eq:KKQ03}
 \abs{\cQ_0^\eps(z)} \lesssim \frac{1}{\norm{z}_\fraks+\eps}\;, 
 \qquad
 \abs{\cQ_1^\eps(z)} \lesssim 1\;, 
 \qquad
 \abs{\cQ_2^\eps(z)} \lesssim 1\;. 
\end{equation}
Finally, for $i,j\in\set{0,1,2}$ let 
\begin{equation}
 \label{eq:KKQ04}
 I_{ij}(\eps) = \int K(z) \cQ_i^\eps(z)\cQ_j^\eps(z)\6z\;.  
\end{equation} 
Then $I_{00}(\eps) \asymp \abs{\log\eps}$, while all other $I_{ij}(\eps)$
are bounded uniformly in $\eps$. 
\end{lemma}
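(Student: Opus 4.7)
The plan is to establish the bounds in the order they are stated, leveraging the explicit Gaussian form of $K$ near the origin together with standard parabolic scaling arguments.

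First I would obtain the pointwise bounds on $K_\eps$ and $\KQ_\eps$. Starting from the explicit form $K(t,x) = (4\pi t)^{-3/2}\e^{-\norm{x}^2/4t}$ valid near the origin, a case distinction between $\abs{x}^2 \leqs t$ and $\abs{x}^2 \geqs t$ (with the Gaussian damping $u^{3/2}\e^{-u/4} \lesssim 1$ in the variable $u=\abs{x}^2/t$ absorbing the singularity in the second case) yields $\abs{K(z)} \lesssim \norm{z}_\fraks^{-3}$. Since $\varrho$ is compactly supported in a parabolic ball and $3<\abs{\fraks}=5$, convolution with $\varrho_\eps$ regularises the singularity at scale $\eps$, giving $\abs{K_\eps(z)} \lesssim (\norm{z}_\fraks+\eps)^{-3}$. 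For $\KQ_\eps$, the boundedness of $Q$ on $[0,2T]$ reduces matters to estimating $\int(\sqrt{\abs{u}}+\abs{x}+\eps)^{-3}\6u$, which via the substitution $v=\sqrt{\abs{u}}$ evaluates (up to a constant) to $\lesssim (\abs{x}+\eps)^{-1}$.

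The three single-integral bounds then follow by parabolic polar coordinates with volume element $r^4\6r$: $\int K_\eps^2 \lesssim \int_0^{\Order{1}}(r+\eps)^{-6}r^4\6r \lesssim \eps^{-1}$, while $\int K_\eps\KQ_\eps$ and $\int \KQ_\eps^2$ reduce, after integrating in the time variable, to spatial integrals of $(\abs{x}+\eps)^{-1}$ times a locally bounded factor on $\R^3$, which are uniformly bounded because $\abs{x}^{-1}$ is locally square-integrable in three dimensions. For the convolution estimates on $\cQ_i^\eps$ I would invoke the standard Young-type inequality for homogeneous parabolic kernels: if $\abs{f(z)} \lesssim (\norm{z}_\fraks+\eps)^{-\alpha}$ and $\abs{g(z)} \lesssim (\norm{z}_\fraks+\eps)^{-\beta}$ with $\alpha,\beta<\abs{\fraks}$ and $\alpha+\beta>\abs{\fraks}$, then $\abs{(f*g)(z)} \lesssim (\norm{z}_\fraks+\eps)^{-(\alpha+\beta-\abs{\fraks})}$. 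Applied with $\alpha=\beta=3$, this gives $\abs{\cQ_0^\eps(z)} \lesssim (\norm{z}_\fraks+\eps)^{-1}$. The bounds on $\cQ_1^\eps$ and $\cQ_2^\eps$ exploit the fact that $\abs{\KQ_\eps(t,x)}$ is bounded by $(\abs{x}+\eps)^{-1}$ uniformly in $t$: integrating first in time reduces matters to a spatial convolution in $\R^3$ of two factors of the form $(\abs{\cdot}+\eps)^{-1}$, which is uniformly bounded on a compact set since $1+1<3$.

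For the integrals $I_{ij}(\eps)$, the pointwise bound on the integrand is at worst $\norm{z}_\fraks^{-3}(\norm{z}_\fraks+\eps)^{-2}$ in the case $i=j=0$, and strictly more regular when at least one index is non-zero (losing a factor $(\norm{z}_\fraks+\eps)$ for each replacement of $\cQ_0^\eps$ by $\cQ_1^\eps$ or $\cQ_2^\eps$). In parabolic polar coordinates the $I_{00}$ integrand contributes $r(r+\eps)^{-2}\6r$, which integrated from $0$ to a constant gives precisely the announced order $\abs{\log\eps}$, while all other $I_{ij}$ yield convergent integrals uniform in $\eps$. The subtle point I expect to be the main obstacle is the matching \emph{lower} bound $I_{00}(\eps) \gtrsim \abs{\log\eps}$, since neither $K$ nor $\cQ_0^\eps$ is a priori non-negative everywhere. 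To handle this I would localise to a small parabolic neighbourhood of the origin where $K$ coincides with the positive Gaussian, and show via explicit Gaussian computation that $\cQ_0^\eps(z)$ admits a positive lower bound of order $(\norm{z}_\fraks+\eps)^{-1}$ on a parabolic cone $\abs{x}^2 \leqs ct$, $t \in [\eps^2, c']$; integrating over this region then produces a matching lower bound of order $\abs{\log\eps}$.
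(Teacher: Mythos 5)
Your argument follows the same overall strategy as the paper's: extract the order-$(-3)$ parabolic singularity of $K$ from the explicit Gaussian and transfer it to $K_\eps$ by mollification (the paper invokes Lemma~10.17 of \cite{Hairer2014} for that step); derive the $\KQ_\eps$ bound by integrating in the time variable; estimate $\cQ_0^\eps$ via a Young-type convolution inequality for singular parabolic kernels (the paper's Lemma~10.14 of \cite{Hairer2014}); and reduce the $\cQ_1^\eps$, $\cQ_2^\eps$ and $I_{ij}$ bounds to elementary radial integrals. The minor organisational difference --- you integrate $K_\eps$ in time first to obtain a $(\abs{x}+\eps)^{-1}$ factor and then do a purely spatial convolution in $\R^3$, whereas the paper carries the explicit $\KQ_\eps$ bound through spatial spherical coordinates and finishes with a time integral --- produces the same exponent bookkeeping. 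The one substantive addition in your proposal is the explicit treatment of the matching lower bound $I_{00}(\eps)\gtrsim\abs{\log\eps}$ required for the $\asymp$ in the statement: the paper's own proof only establishes $I_{00}(\eps)\lesssim\abs{\log\eps}$ and leaves the lower bound unaddressed. Your sketch (restrict to a parabolic cone near the origin where $K$ coincides with the positive heat kernel and show that $\cQ_0^\eps$ admits a positive lower bound of order $(\norm{z}_\fraks+\eps)^{-1}$ there) is a sensible route; to complete it you would still need to quantify the positive Gaussian lower bound for $\cQ_0^\eps$ in the presence of a mollifier $\varrho$ that need not be nonnegative, and to verify that the smooth part of $K$ and the tail of $\cQ_0^\eps$ away from the diagonal contribute only bounded corrections.
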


The proof is given in Appendix~\ref{app_proof_KKQ}. It is important to emphasize
that Lemma \ref{lem:KKQ} is the key result to show that certain terms do not
have to be renormalised. For example, consider the difference between the first
bound in \eqref{eq:KKQ02} and the second and third bounds. This explains why
certain symbols involving the $\cE$-operator do not have to be renormalised
while the same symbol without an additional $\cE$-operator has to be
renormalised; see also Appendix~\ref{app_proof_KKQ} for a detailed computation.

\begin{lemma}
\label{lem:KQeps}
For any $\eps,\theta\in(0,1]$ and any $(t,x)\in\R^4$, one has the bound  
\begin{equation}
\label{eq:KQeps01} 
  \bigabs{\KQ(t,x) - \KQ_\eps(t,x)}
 \lesssim \frac{\eps^\theta}{\abs{x}^{1+\theta}}\;.
\end{equation} 
As a consequence, 
\begin{equation}
\label{eq:KQeps02} 
\int \bigpar{\KQ_\eps(z-z_1) - \KQ(z-z_1)} 
\bigpar{\KQ_\eps(\bar z-z_1) - \KQ(\bar z-z_1)} \6z_1  
\lesssim \eps^{2\theta}
\end{equation}
holds for all  $\theta\in(0,\frac12)$ and all $z, \bar z\in\R^4$.
\end{lemma}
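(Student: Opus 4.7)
The plan is to first establish the pointwise bound \eqref{eq:KQeps01}, and then deduce the integral estimate \eqref{eq:KQeps02} from it via Cauchy--Schwarz. For~\eqref{eq:KQeps01}, I distinguish two regimes depending on the relative size of $\abs{x}$ and $\eps$. In the regime $\abs{x}\geqs 2\eps$, I write
\[
  K_\eps(z)-K(z) = \int \varrho_\eps(z')\bigbrak{K(z-z')-K(z)}\6z'\;.
\]
On the support of $\varrho_\eps$ one has $\norm{z'}_\fraks\leqs\eps\leqs\abs{x}/2\leqs\norm{z}_\fraks/2$ whenever $z=(t-s,x)$ with $s\in[0,2T]$, so that the standard derivative bound $\abs{\nabla K(z)}\lesssim \norm{z}_\fraks^{-4}$ together with a first-order Taylor expansion yields $\abs{K(z)-K_\eps(z)}\lesssim \eps/\norm{z}_\fraks^4$. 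Integrating against $\abs{Q(s)}\6s$ and performing the change of variable $u=\sqrt{\abs{t-s}}$ then gives
\[
 \bigabs{K^Q(t,x)-K^Q_\eps(t,x)} \lesssim \eps\int_0^\infty \frac{u}{(u+\abs{x})^4}\6u \lesssim \frac{\eps}{\abs{x}^2}\;,
\]
which in the regime $\abs{x}\geqs \eps$ is dominated by $\eps^\theta/\abs{x}^{1+\theta}$ for every $\theta\in(0,1]$. In the complementary regime $\abs{x}<2\eps$, the triangle inequality combined with the bounds $\abs{K^Q_\eps(z)},\abs{K^Q(z)}\lesssim 1/\abs{x}$ from Lemma~\ref{lem:KKQ} already gives $\abs{K^Q(z)-K^Q_\eps(z)}\lesssim 1/\abs{x}$, which is in turn bounded by $\eps^\theta/\abs{x}^{1+\theta}$ precisely when $\abs{x}\lesssim \eps$.

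To obtain~\eqref{eq:KQeps02}, set $D_\eps = K^Q_\eps - K^Q$. After translating in $z_1$, Cauchy--Schwarz reduces the integral to
\[
 \biggabs{\int D_\eps(z-z_1)D_\eps(\bar z-z_1)\6z_1} \leqs \norm{D_\eps}_{L^2(\R^4)}^2\;.
\]
Both $K^Q$ and $K^Q_\eps$ are compactly supported in $(t,x)$, since $K$ is supported in $\set{\abs{x}^2+\abs{t}\leqs 1}$ and $Q$ on $[0,2T]$. Squaring~\eqref{eq:KQeps01} and integrating over this compact support yields
\[
 \norm{D_\eps}_{L^2}^2 \lesssim \eps^{2\theta}\int_{\abs{x}\lesssim 1}\frac{\6x}{\abs{x}^{2+2\theta}}\;,
\]
the $t$-integration being over a bounded interval. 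The spatial integral is finite in $\R^3$ precisely when $2\theta<1$, which corresponds exactly to the range $\theta\in(0,\tfrac12)$ assumed in the statement.

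The main obstacle lies in proving~\eqref{eq:KQeps01} with the correct exponent, particularly in seeing why time-convolution against the smooth bounded kernel $Q$ converts the parabolic decay of $K$ (in $\norm{z}_\fraks^{-4}$) into purely spatial decay in $\abs{x}^{-2}$. Once this reduction is carried out, \eqref{eq:KQeps02} follows from a routine dimension count, with the threshold $\theta<\tfrac12$ arising from the borderline integrability of $\abs{x}^{-2-2\theta}$ near the origin in three spatial dimensions.
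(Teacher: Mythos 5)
Your proof is correct and takes a genuinely different route from the paper's. For the pointwise bound~\eqref{eq:KQeps01}, the paper invokes the abstract singular--kernel machinery of~\cite{Hairer2014} (Definition~10.12 and Lemma~10.17), which directly yields $\normDgamma{K-K_\eps}_{\bar\zeta;0}\lesssim\eps^{-3-\bar\zeta}$ at the sub-optimal order $\bar\zeta=-3-\theta$. You work elementarily instead: a first-order Taylor expansion in the regime $\abs{x}\gtrsim\eps$ gives the stronger bound $\eps/\abs{x}^2$, and a crude triangle-inequality bound $1/\abs{x}$ in the regime $\abs{x}\lesssim\eps$, each then dominated by $\eps^\theta/\abs{x}^{1+\theta}$ in its own regime. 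One small imprecision worth flagging: the stated bound $\abs{\nabla K(z)}\lesssim\norm{z}_\fraks^{-4}$ is only correct for the \emph{spatial} gradient; $\partial_t K$ is singular of order $-5$. But since the mollifier has width $\eps^2$ in time, the time contribution $\eps^2\norm{z}_\fraks^{-5}$ is still dominated by $\eps\norm{z}_\fraks^{-4}$ once $\eps\lesssim\norm{z}_\fraks$, so your final estimate is correct. For~\eqref{eq:KQeps02}, the paper carries out a fairly involved spherical-coordinate computation of the convolution $\int\abs{x_1}^{-1-\theta}\abs{x-x_1}^{-1-\theta}\6x_1$; your Cauchy--Schwarz reduction to $\norm{D_\eps}_{L^2}^2$, combined with the pointwise bound and the compact support of $K^Q,K^Q_\eps$, is simpler and more transparent, with the threshold $\theta<\tfrac12$ emerging identically from the integrability of $\abs{x}^{-2-2\theta}$ near the origin in $\R^3$.
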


We give the proof in Appendix~\ref{app_proof_KQeps}. 

We can now state the main result of this section, which is an adaptation
of~\cite[Thm.~10.22]{Hairer2014}. 

\begin{prop}[Convergence of the renormalised models]
\label{prop:convergence_renorm}
There exists a random model $\Zhat$, independent of the choice of mollifier
$\varrho$, and a family of renormalisation transformations
$M_\eps\in\mathfrak{R}$ such that for any $\theta < -\frac52 -\alpha_0 =
\kappa$ (recall from~\eqref{eq:rs04} the definition of the noise regularity
$\alpha_0$), any compact set $\fraK$ and any $\gamma<\zeta$, one has 
\begin{equation}
 \label{eq:cvr01}
 \E \seminormff{M_\eps Z^\eps}{\Zhat}_{\gamma;\fraK} \lesssim \eps^\theta
\end{equation} 
uniformly over $\eps\in(0,1]$. Here $\zeta$ is such that~\eqref{eq:K02}
holds for all polynomials $P$ of parabolic degree less or equal $\zeta$. 
\end{prop}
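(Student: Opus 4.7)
The plan is to adapt the proof of Theorem~10.22 in \cite{Hairer2014} to our extended regularity structure. By the general convergence criterion (Theorem~10.7 and Proposition~10.11 in \cite{Hairer2014}), it suffices to exhibit, for every symbol $\tau\in\cF_-$ of strictly negative homogeneity, random distributions $\widehat\Pi_z\tau$ in the inhomogeneous Wiener chaos of order $\norm{\tau}$ whose chaos kernels $\widehat\cW^{(\eps;k)}\tau$, together with the differences $\widehat\cW^{(\eps;k)}\tau - \widehat\cW^{(k)}\tau$, satisfy the pointwise self-contraction bounds~\eqref{eq:rFHN04c}. The renormalisation map will be chosen as $M_\eps = \exp\set{-C_1(\eps)L_\RSV - C_2^{(\eps)}(\RSWV)L_\RSWV}$, with $C_1(\eps)$ given by~\eqref{eq:rFHN12} and $C_2^{(\eps)}(\RSWV)$ given by~\eqref{eq:rFHN21}; the principal task is to justify that none of the remaining ten parameters introduced in~\eqref{eq:rgen08} is needed.

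First I would go through the list of symbols $\tau\in\cF_-$, grouped by number of occurrences of $\cE$. Symbols with no $\cE$-factor ($\Xi,\RSI,\RSV,\RSW,\RSVW,\RSWV,\RSWW$) are handled \emph{verbatim} as in \cite[Thm.~10.22]{Hairer2014}: the formulas~\eqref{eq:rFHN06a}--\eqref{eq:rFHN19} define the required Wiener chaos representations, the only divergent self-contractions are $\int K_\eps(z)^2\6z$ and $\int K(z)\cQ_0^\eps(z)^2\6z$, and these are precisely what $C_1$ and $C_2(\RSWV)$ absorb. For symbols containing at least one $\cE$-factor (for instance $\RSoI,\RSVo,\RSVoo,\RSoV$, and the analogous replacements inside $\RSW, \RSVW, \RSWV, \RSWW$), the definition~\eqref{eq:E01} of $\Pi^\eps\cE\tau$ ensures that the Wiener chaos kernels are obtained from the Allen--Cahn ones by replacing one or several factors of $K_\eps$ by $\KQ_\eps$. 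The crucial observation is then Lemma~\ref{lem:KKQ}: whereas $\int K_\eps^2\6z$ diverges like $\eps^{-1}$ and $\cQ_0^\eps(z)$ has a $(\norm{z}_\fraks+\eps)^{-1}$ singularity at the origin that produces a $\log\eps^{-1}$ divergence when paired with $K$, the quantities $\int\KQ_\eps^2\6z$, $\int K_\eps\KQ_\eps\6z$, $\cQ_1^\eps$, $\cQ_2^\eps$ and the integrals $I_{ij}(\eps)$ with $(i,j)\neq(0,0)$ are all bounded uniformly in $\eps$. Consequently, every self-contraction producing a new symbol of the form $\cE\tau$ already yields a convergent integral, so no new renormalisation constants are required.

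Second I would verify the required pointwise kernel bounds. For each $\tau$ and each $k$, $\widehat\cW^{(k)}\tau$ is a finite sum of integrals over internal vertices of products of $K$- and $\KQ$-factors indexed by a labelled graph, exactly as in \cite[Sec.~10.5]{Hairer2014}, and Lemma~\ref{lem:KKQ} supplies the pointwise bounds needed to apply the BPHZ-type integration estimates of \cite[Thm.~10.14]{Hairer2014}; the fact that $\cE\tau$ and $\tau$ share the same homogeneity ensures that the target exponent $2\abss{\tau}+\kappa-\lambda$ is unchanged. For the convergence bounds on $\delta\widehat\cW^{(\eps;k)}\tau$, the $K_\eps\to K$ part is standard, while Lemma~\ref{lem:KQeps} provides the missing input $\bigabs{\KQ(z)-\KQ_\eps(z)}\lesssim \eps^\theta\abs{x}^{-1-\theta}$, allowing the same telescoping argument as in \cite{Hairer2014} to produce the factor $\eps^{2\theta}$. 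The independence of the limit $\Zhat$ from the mollifier $\varrho$ is automatic, since the kernels $K$ and $\KQ$ are deterministic and $\varrho$-independent.

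The main technical obstacle is combinatorial rather than analytical: one has to check, for each of the finitely many new symbols obtained by $\cE$-substitutions inside $\RSVW,\RSWV,\RSWW$, that every potentially divergent self-contraction contains at least one factor of the form $\int\KQ_\eps K_\eps\6z$ or involves $\cQ_1^\eps, \cQ_2^\eps$ in place of $\cQ_0^\eps$, so that the divergence is suppressed. This is precisely the content of Lemma~\ref{lem:KKQ} and reflects the heuristic that the $\cE$-operator trades the time-singularity of $K_\eps$ against integration against $Q$, thereby improving the short-distance behaviour of the associated kernels.
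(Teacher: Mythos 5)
Your proposal follows essentially the same route as the paper's proof: you invoke the convergence criterion of Hairer's Theorem 10.7/Proposition 10.11, treat the Allen--Cahn symbols exactly as in Hairer's Theorem 10.22, and use Lemma~\ref{lem:KKQ} and Lemma~\ref{lem:KQeps} to show that the $\cE$-decorated symbols produce only bounded self-contractions, so that $M_\eps$ needs only the two nonzero constants $C_1(\eps)$ and $C_2(\RSWV)$. The only minor discrepancy is the inclusion of $\RSoV=\cE(\RSV)$ among the relevant symbols: the fixed-point expansion applies $\cE$ only to $U_-$, which is spanned by $\RSI$, so $\RSoV$ does not actually arise; this is harmless but not needed.
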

\begin{proof}
As already stated, the proof follows along the lines
of~\cite[Thm.~10.22]{Hairer2014}, with some changes due to the presence of the
additional integration operator $\cE$. Thus we will only comment on those parts
of the proof that need to be adapted. 

The result will follow if we are able to choose the constants $C_1(\tau)$ and
$C_2(\tau)$ in~\eqref{eq:rgen08} for each $\eps\in(0,1]$ in such a way that the
bounds~\eqref{eq:rFHN02} hold for all $\tau\in\cF_-$; equivalently, the
bounds~\eqref{eq:rFHN04c} should hold for all these $\tau$. The limiting model
$\Zhat$ will be constructed step by step during this process.

The first step is to set 
\[
 (\Pihat_0\Xi)(z) = \xi(z)\;,
\]
and the fact that the required bounds are satisfied is a consequence of
Proposition~9.5 in~\cite{Hairer2014} (see also Lemma~10.2). 

In the case of $\tau=\RSI$, the limiting model is defined
by~\eqref{eq:rFHN06a}, and the conclusion follows as shown
in~\cite[Thm.~10.22]{Hairer2014} by applying the bounds on convolutions 
in~\cite[Lemmas~10.14 and~10.17]{Hairer2014}. We thus turn to the case
$\tau=\RSoI$. By~\eqref{eq:rFHN05e}, we have 
\begin{align*}
 \bigpar{(\widehat\cW^{(\eps;1)}\RSoI)(z)}(z_1) &= \KQ_\eps(z-z_1)\;, \\ 
 \bigpar{(\widehat\cW^{(1)}\RSoI)(z)}(z_1) &= \KQ(z-z_1)\;. 
\end{align*}
The first bound in~\eqref{eq:rFHN04c} then follows from the fact that 
\[
 \bigpscal{(\widehat\cW^{(1)}\RSoI)(z)}{(\widehat\cW^{(1)}\RSoI)(\bar z)}
 = \int \KQ(z-z_1)\KQ(\bar z-z_1)\6z_1 = \cQ^0_2(z-\bar z)
\]
(via the change of variables $z_1\mapsto z-z_1$). The right-hand side being
bounded as shown in Lemma~\ref{lem:KKQ}, the bound~\eqref{eq:rFHN04c} is
satisfied for $\lambda=0$ provided $\bar\kappa\leqs -2\abss{\RSoI} =
1+2\kappa$. In order to prove the second bound in~\eqref{eq:rFHN04c}, we use
the bound~\eqref{eq:KQeps02} in Lemma~\ref{lem:KQeps}, which yields 
\[
 \bigpscal{(\delta\widehat\cW^{(\eps;1)}\RSoI)(t,x)}
 {(\delta\widehat\cW^{(\eps;1)}\RSoI)(\bar t,\bar x)}
 \lesssim \eps^{2\theta}
\]
for any $\theta\in(0,\frac12)$.
Next we consider the term $\RSVo$. Then a decomposition analogous to the one
given in~\eqref{eq:rFHN09} shows that we have 
\begin{align*}
\bigpar{(\widehat\cW^{(\eps;2)}\RSVo)(z)}(z_1,z_2) &=
K_\eps(z-z_1)\KQ_\eps(z-z_2)\;, \\
(\widehat\cW^{(\eps;0)}\RSVo)(z) &=
\int K_\eps(-z_1)\KQ_\eps(-z_1)\6z_1 - C_1'(\eps)\;,
\end{align*}
where we have used the change of variables $z_1\mapsto z_1-z$.
Note that the second line is independent of $z$, as a consequence of
translation invariance, so that we can drop the argument $z$ from the notation.
The integral in the second line is equal to $\cQ^\eps_1(0)$, which is bounded
uniformly in $\eps$ by Lemma~\ref{lem:KKQ}. We may thus simply choose
$C_1'(\eps)=0$, which amounts to setting 
\begin{align*}
 (\Pihat_0^{(\eps)}\RSVo)(z) 
 &= \iint K_\eps(z-z_1)\KQ_\eps(z-z_2) \xi(z_1)\diamond\xi(z_2) \6z_1\6z_2 
 + \widehat\cW^{(\eps;0)}\RSVo \\
 &= \chi_\eps(z) \chiQ_\eps(z)\;.
\end{align*}
Indeed, the first bound in~\eqref{eq:rFHN04c} for $k=0$ is satisfied with
$\lambda=0$, provided $\bar\kappa\leqs-2\abss{\RSVo}=2+4\kappa$. Note that
being
allowed to set $C_1'(\eps)=0$ here is precisely the point where we use the fact
that certain terms do not have to be renormalised due to the bounds in
Lemma~\ref{lem:KKQ}. Of course, another possibility would be to set
$C_1'(\eps)=\widehat\cW^{(\eps;0)}\RSVo$, which would result in $\Pihat_0\RSVo$
belonging to the second \emph{homogeneous} Wiener chaos. Since $\cQ^\eps_1(0)$
is bounded, however, the choice of $C_1'(\eps)$ does not really matter, since it
only results in a shift of the random variable defining $\Zhat$. Whatever the
choice, the bounds~\eqref{eq:rFHN04c} can be checked by similar arguments as in
the previous case. 

The symbols $\RSVoo, \RSWo, \RSWoo, \RSWooo$ as well as $\RSVo X_i$ and $\RSVoo
X_i$ can be treated in pretty much the same way. It thus remains to consider
the symbols in the families of $\RSWV$, $\RSVW$ and $\RSWW$. As shown in the
proof of~\cite[Thm.~10.22]{Hairer2014}, the symbol $\RSVW$ already fulfils the
bounds~\eqref{eq:rFHN04c} as a consequence of the choice of $C_1(\eps)$. Since
the kernel $\KQ$ is less singular than $K$, the same conclusion holds for all
symbols of the same family. 

The symbol $\RSWV$ requires the introduction of a further renormalisation
constant $C_2(\RSWV)$ which has order $\abs{\log\eps}$. We thus have to check
what happens to the other symbols in the same family. Consider for instance the
case of $\RSWVo$. A computation similar to the one made in~\eqref{eq:rFHN15}
and~\eqref{eq:rFHN16} yields the expression 
\[
  (\widehat\Pi_0^\eps \RSWVo)(z)
 = \bigbrak{(K*(\chi_\eps\diamond\chiQ_\eps))(z) -
(K*(\chi_\eps\diamond\chiQ_\eps))(0)}
 \chi_\eps^{\diamond 2}(z) - C_2(\RSWVo)\;.
\]
Writing out the integrals and applying~\eqref{eq:rFHN18}, we can decompose this
expression into a sum of terms in the Wiener chaoses of order $0, 2$ and $4$.
In particular, the term in the $0$th Wiener chaos is given by 
\begin{align*}
(\widehat \cW^{(\eps;0)}\RSWVo)(z) 
{}={} & 2\iiint K(z-\bar z)K_\eps(\bar z-z_1)\KQ_\eps(\bar z-z_2)
K_\eps(z-z_1)K_\eps(z-z_2)\6\bar z\6z_1\6z_2 \\
& {}+ 2\iiint K(-\bar z)K_\eps(\bar z-z_1)\KQ_\eps(\bar z-z_2)
K_\eps(z-z_1)K_\eps(z-z_2)\6\bar z\6z_1\6z_2\\
& {}- C_2(\RSWVo)\;.
\end{align*}
The first integral on the right-hand side does not depend on $z$, as can be seen
by the change of variables $(\bar z,z_1,z_2)\mapsto(\bar z+z,z_1+z,z_2+z)$. As a
consequence, it is equal to $2I_{01}(\eps)$, and is thus uniformly bounded in
$\eps$ as shown in Lemma~\ref{lem:KKQ}. The second integral is also bounded, so
that we may choose $C_2(\RSWVo) = 0$ (or any other finite value). The terms in
the other two Wiener chaoses are already bounded in the case of $\RSWV$, so
they remain bounded in the present case. It is now quite obvious that the other
symbols of the family do not need to be renormalised either, as the relevant
integrals are obtained from the above ones by substituting the appropriate
kernels $K$ by $\KQ$. 

Finally, we have to deal with symbols in the family of $\RSWW$. However, it is
shown in the proof of~\cite[Thm.~10.22]{Hairer2014} that the constant
$C_2(\RSWV)$ suffices to renormalise $\RSWW$ as well, so the situation is
completely analogous for the other symbols of the family.  
\end{proof}


\subsection{Computation of the renormalised equations}
\label{ssec_renorm_compute}

In order to complete the proof of the main results, it remains to derive the
classical equation with mollified noise that corresponds to the renormalised
models. In other words, for a given $M=M_\eps\in\mathfrak{R}$, we have to
characterise the solution map $\bar\cS^L_M$ satisfying 
\begin{equation}
\label{eq:rc01} 
 \cR^M \cS^L_M(u_0,v_0,MZ^\eps) = \bar\cS^L_M(u_0,v_0,\xi^\eps)\;,
\end{equation}
where $\cR^M$ is the reconstruction operator of the model
$\Pi^M$ ($=\widehat\Pi^{(\eps)}$) and $\cS^L_M$ is the map giving the fixed
point of the renormalised equation in terms of initial data and forcing term.
Relation~\eqref{eq:rc01} can also be represented as the following commutative
diagram:

\begin{center}
\begin{tikzpicture}[>=stealth']
\matrix (m) [matrix of math nodes, row sep=3em,
column sep=3.5em, text height=1.75ex, text depth=0.5ex]
{ (u_0,v_0,MZ^\eps) & (U^*_M,V^*_M) \\
 (u_0,v_0,\xi^\eps) & (\hat u^\eps,\hat v^\eps) \\};
\path[|->]
(m-1-1) edge node[auto] {$\cS_M^L$} (m-1-2)
(m-2-1) edge node[auto] {$M\Psi$} (m-1-1)
(m-1-2) edge node[auto] {$\cR^M$} (m-2-2)
(m-2-1) edge node[below] {$\bar\cS^L_M$} (m-2-2);
\end{tikzpicture}
\end{center}

In order to determine the classical equation, we will need the following
algebraic result.

\begin{lemma}
\label{lem:PizM}
For all $\tau\in\cF_0$, one has 
\begin{equation}
 \label{eq:rc02}
 (\Pi^M_z\tau)(z) = (\Pi_z M\tau)(z)\;.
\end{equation} 
\end{lemma}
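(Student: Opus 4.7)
The plan is to verify the identity by induction on the structure of $\tau\in\cF_0$, checking it element by element using the explicit formulas~\eqref{eq:M-AC} and~\eqref{eq:PiM-AC} (and their $\cE$-decorated analogues). For each such $\tau$ we evaluate both $(\Pi^M_z\tau)(z)$ and $(\Pi_zM\tau)(z)$ and check that they coincide, using throughout that $\Pi_z$ (the canonical model with mollified noise) is multiplicative, i.e., $(\Pi_z(\sigma_1\sigma_2))(\bar z) = (\Pi_z\sigma_1)(\bar z)(\Pi_z\sigma_2)(\bar z)$.

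The base cases $\tau\in\set{\unit, X_i, \Xi}$ are immediate, since both $M$ and $\Delta^M$ act trivially. For the symbols that $M$ fixes---such as $\RSI$, $\RSoI$, $\RSY$ and their analogues---the claim reduces to the tautology $\Pi^M_z\tau = \Pi_z\tau$. For the remaining symbols, a crucial observation simplifies the verification substantially: for any admissible model and any $\sigma$ such that $\abss{\cI\sigma}>0$, one has $(\Pi_z\cI\sigma)(z) = 0$. Indeed, by~\eqref{eq:Pi_xI} and the definition of $\pscal{f_z}{\cJ_0\sigma}$ in~\eqref{eq:f_x}, the convolution term $\int K(z-\bar z')(\Pi_z\sigma)(\bar z')\6\bar z'$ cancels the $\ell=0$ contribution $\pscal{f_z}{\cJ_0\sigma}$, while the terms $(\bar z-z)^\ell/\ell!$ with $\abss{\ell}\geqs 1$ vanish at $\bar z = z$; likewise $(\Pi_z X^k)(z) = 0$ for $\abss{k}\geqs 1$.

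With this observation in hand, the cases $\tau\in\set{\RSV,\RSW}$ follow directly from~\eqref{eq:M-AC-V}, \eqref{eq:M-AC-W}, \eqref{eq:PiM-AC-V} and~\eqref{eq:PiM-AC-W}. For $\tau = \RSWV$ the observation yields $(\Pi_z\RSY)(z) = 0$ (since $\abss{\RSY} = 1-2\kappa > 0$), so that both $(\Pi^M_z\RSWV)(z)$ and $(\Pi_zM\RSWV)(z)$ collapse to $-C_2(\RSWV)$. The most involved case is $\tau=\RSWW$: the extra polynomial correction $3C_1\Pi_z(\RSV X_i)\pscal{f_z}{\cJ_{e_i}(\RSI)}$ in~\eqref{eq:PiM-AC-WW} vanishes at $\bar z = z$ because $(\Pi_z X_i)(z) = 0$; symmetrically, the terms $\RSIW, \RSWI, \RSII$ appearing in $M\RSWW$ via~\eqref{eq:M-AC-WW} all have outermost integration raising homogeneity above zero, so they vanish at $z$, and both sides reduce to $-3C_2(\RSWV)(\Pi_z\RSI)(z)$.

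The main obstacle, though not conceptual, is the bookkeeping required for the many $\cE$-decorated variants $\RSWoW, \RSWooWoo, \RSoWooV, \dots$ generated by the extension of the regularity structure in Section~\ref{sec_extend}. Since $\cE$ preserves homogeneity and commutes with $M$ by Definition~\ref{def:renorm_group}, the computation for any decorated symbol has exactly the same structure as for its undecorated counterpart, with uniform substitutions $\RSI\leftrightarrow\RSoI$, $\RSV\leftrightarrow\RSoV$, and so on. The above observation applies without modification---admissibility~\eqref{eq:E01} ensures the analogous vanishing at $\bar z=z$ for $\cE$-decorated symbols of positive homogeneity---so the proof is completed by a uniform case-by-case check through the elements of $\cF_0$ listed in~\eqref{eq:rgen02} together with their $\cE$-decorated analogues.
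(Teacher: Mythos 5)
Your proof is correct, and the core observation it is organised around---that $(\Pi_z\cI\sigma)(z)=0$ for any admissible model whenever $\abss{\cI\sigma}>0$, by the cancellation between the convolution term and the $\ell=0$ term in~\eqref{eq:Pi_xI}---is a sound and efficient way to evaluate both sides of~\eqref{eq:rc02} at the base point. The paper's own proof has a somewhat different emphasis: it uses the structural relation $\Pi^M_z\tau=(\Pi_z\otimes f_z)\Delta^M\tau$ from~\eqref{eq:rgen04} to note that the identity holds \emph{everywhere}, not merely at $\bar z=z$, for all $\tau$ with $\Delta^M\tau=M\tau\otimes\unit$, which happens to be every symbol except those in the families of $\RSWV$, $\RSVW$ and $\RSWW$; only for these exceptional families does the paper fall back on an explicit computation of $\Delta^M\tau$, where the extra terms are of the form $\sigma X_i\otimes\cJ_i(\bar\sigma)$ and vanish at $z$ because $(\Pi_z X_i)(z)=0$. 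Your approach evaluates all cases uniformly at $\bar z=z$; this loses the information about when the stronger pointwise identity $\Pi^M_z\tau=\Pi_z M\tau$ holds as distributions, but it has the advantage of a single reusable principle instead of a case-split between the trivial and nontrivial $\Delta^M$. One small imprecision worth fixing: you attribute the ``analogous vanishing at $\bar z=z$ for $\cE$-decorated symbols of positive homogeneity'' to~\eqref{eq:E01}, but~\eqref{eq:E01} gives no such vanishing---$(\Pi^\eps_z\cE\tau)(z)$ is generically nonzero. Rather, every symbol of positive homogeneity in $\cF_0$ and its decorated variants is either of the form $\cI\sigma$ (so your observation applies, possibly with $\sigma$ containing $\cE$'s) or a product of such (so multiplicativity of the canonical model reduces to those cases); the operator $\cE$ itself never appears outermost in a positive-homogeneity symbol, since it preserves homogeneity and only acts on the negative-homogeneity sector $V$.
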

\begin{proof}
The claim can be checked directly by comparing the expressions given
in~\eqref{eq:M-AC} and~\eqref{eq:PiM-AC}. More generally, the first relation
in~\eqref{eq:rgen04} shows that~\eqref{eq:rc02} holds for all $\tau$ such that
$\Delta^M(\tau) = M\tau\otimes\unit$, which happens to be the case for all
symbols excepts those of the families $\RSWV, \RSVW$ and $\RSWW$. In the
exceptional cases, the relation has to be checked by an explicit computation; in
general it holds only when the model is evaluated at the base point, i.e.\ we
may have $(\Pi^M_z\tau)(\bar z) \neq (\Pi_z M\tau)(\bar z)$ if $z\neq\bar z$.
For instance, in the case $\tau=\RSVoWoo$, the action of the renormalisation
map is given by 
\[
 M\RSVoWoo = \RSVoWoo -2C_1'\RSVoIo -C_1''\RSVoI\;.
\]
One can then check that in the case $\tau=\RSWoo$, \eqref{eq:rgen04b} is
satisfied by setting $\Delta^M\RSWoo=M\RSWoo\otimes\unit$ and $\hat
M\cJ(\RSWoo)=\cJ(M\RSWoo)$. Using this and the definition of the map $\Delta$,
a somewhat lengthy computation shows that 
\[
 \Delta^M \RSVoWoo = M\RSVoWoo\otimes\unit + 2C_1'\RSoI
X_i\otimes\cJ_i(\RSoI) + C_1''\RSoI X_i\otimes\cJ_i(\RSI)
\]
satisfies the second relation in~\eqref{eq:rgen04b} when $\tau=\RSVoWoo$. 
By~\eqref{eq:rgen04}, this implies 
\[
 \Pi_z^M\RSVoWoo = (\Pi_z\otimes f_z)\Delta^M\RSVoWoo
 = \Pi_z M\RSVoWoo + 2C_1'\Pi_z(\RSoI X_i)\pscal{f_z}{\cJ_i\RSoI}
 + C_1''\Pi_z(\RSoI X_i)\pscal{f_z}{\cJ_i\RSI}\;,
\]
where we used the fact that $\pscal{f_z}{\unit}=1$.
Since $(\Pi_z\RSoI X_i)(\bar z) = (\Pi_z\RSoI)(\bar z)(\bar x_i-x_i)$
by~\eqref{eq:Pi_x}, we see that the last two terms on the right-hand side indeed
vanish when $\bar z=z$.
\end{proof}

As a consequence of~\eqref{eq:rc02}, when $f$ is a function, as is the case for
$\eps>0$, we have 
\begin{equation}
 \label{eq:rc03}
 (\cR^M f)(z) = (\Pi^M_z f(z))(z) = (\Pi_z Mf(z))(z) = (\cR Mf)(z)\;.
\end{equation} 
This directly implies the following result, which is the equivalent of 
Proposition~\ref{prop:fixed_point_equiv} for the renormalised equation. 

\begin{prop}[Renormalised equation]
\label{prop:fixed_point_renorm}
Assume that $\eps>0$ and that~\eqref{eq:fix01} admits a fixed point $(U,V)$
where $U, V\in\cD^{\gamma,\eta}$ for some $\gamma, \eta\in\R$. Then
$(\hat u,\hat v)=(\cR^M U, \cR^M V)$ satisfies the classical SPDE 
\begin{align}
\nonumber
\partial_t \hat u &= \Delta \hat u + \widehat F(\hat u,\hat v) + \xi^\eps\;, \\
\partial_t \hat v &= A_1 \hat u + A_2 \hat v\;,
\label{eq:rc04}
\end{align} 
provided $\widehat F$ is such that  
\begin{equation}
\label{eq:rc05}
\widehat F(MU,MV) = MF(U,V) + \varrho(U,V)\;, 
\end{equation} 
with $\varrho$ having only components of strictly positive homogeneity. 
\end{prop}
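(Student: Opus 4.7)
The proof follows closely that of Proposition~\ref{prop:fixed_point_equiv}, with $\cR$ systematically replaced by the renormalised reconstruction $\cR^M$ associated to $\Pi^M = \widehat\Pi^{(\eps)}$. The plan is: apply $\cR^M$ to \eqref{eq:fix01}, commute it through the two convolution operators via the Schauder estimates to reach a mild-form equation, and then verify that the nonlinearity reconstructs correctly to $\widehat F(\hat u, \hat v)$.

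For the first two steps, applying Theorem~5.12 of \cite{Hairer2014} (combined with \eqref{eq:R_gamma2}) to $\cK_{\bar\gamma} + R_\gamma\cR^M$, and Propositions~\ref{prop:Schauder_E}--\ref{prop:Schauder_D_gamma_eta} to $\cK^Q_\gamma$, one turns the abstract convolution operators into classical convolutions with the kernels $G$ and $Q$. Since reconstruction commutes with the sharp cut-off $\Rplus$, and since $M\Xi = \Xi$ so that $\cR^M \Rplus \Xi = \xi^\eps \indexfct{t>0}$, one obtains
\begin{equation*}
\hat u = G * \Rplus\bigbrak{\xi^\eps + \cR^M F(U,V)} + Gu_0,
\qquad
\hat v(t,x) = \int_0^t Q(t-s)\hat u(s,x)\,ds + \e^{tA_2}v_0(x)\indexfct{t>0}\;.
\end{equation*}
This pair coincides with the mild formulation of \eqref{eq:rc04} provided the identity $\cR^M F(U,V) = \widehat F(\hat u, \hat v)$ holds, which is thus the only nontrivial point.

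For this identity, Lemma~\ref{lem:PizM}---applicable because $\eps>0$ makes every object a genuine function, see~\eqref{eq:rc03}---gives $\cR^M = \cR \circ M$ pointwise, so in particular $\hat u = \cR MU$, $\hat v = \cR MV$, and the hypothesis~\eqref{eq:rc05} yields
\begin{equation*}
\cR^M F(U,V) \;=\; \cR M F(U,V) \;=\; \cR \widehat F(MU, MV) - \cR \varrho(U,V)\;.
\end{equation*}
The first term on the right equals $\widehat F(\hat u, \hat v)$ because the bare canonical model $\Pi^\eps$ is multiplicative, so reconstruction distributes over the polynomial $\widehat F$: $\cR \widehat F(MU,MV)(z) = \widehat F\bigpar{(\Pi^\eps_z MU(z))(z),(\Pi^\eps_z MV(z))(z)} = \widehat F(\hat u(z), \hat v(z))$. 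That $\cR\varrho(U,V) = 0$ follows from \cite[Prop.~3.28]{Hairer2014}---which gives $(\cR\varrho(U,V))(z) = (\Pi^\eps_z\varrho(U,V)(z))(z)$---together with a short check that $(\Pi^\eps_z\tau)(z) = 0$ for every basis element $\tau$ of strictly positive homogeneity that can appear: this is trivial for $\tau = X^k$ with $\abss{k}\geqs1$; for $\tau = \cI\sigma$ with $\abss{\cI\sigma}>0$ it follows from the cancellation between the integral in \eqref{eq:Pi_xI} and the $\cJ_0$-term coming from \eqref{eq:f_x}, exactly as in the $\RSY$-computation of Proposition~\ref{prop:fixed_point_equiv}; and for products of positive homogeneity one factor is always such an $\cI\sigma$, so multiplicativity of $\Pi^\eps_z$ forces the product to vanish.

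The only mildly tedious aspect will be enumerating these positive-homogeneity symbols---including the analogues of $\RSY, \RSVW, \RSWV, \RSWW$ obtained by substitutions $\cI\mapsto\cE\cI$---to confirm the vanishing property in each case; the genuine conceptual content is entirely absorbed into Lemma~\ref{lem:PizM}, which reduces the renormalised reconstruction of a smooth object to the ordinary one precomposed with $M$.
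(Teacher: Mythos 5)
Your proof is correct and takes essentially the paper's own route: the whole weight rests on Lemma~\ref{lem:PizM}/\eqref{eq:rc03}, which gives $\cR^M = \cR\circ M$ pointwise for $\eps>0$, followed by \eqref{eq:rc05} and multiplicativity of the bare canonical model. The only (cosmetic) difference is the bookkeeping: the paper pushes $M$ through the fixed-point equation \eqref{eq:fix01a} and then applies the bare $\cR$, whereas you apply $\cR^M$ directly to \eqref{eq:fix01} using the Schauder estimates for the renormalised model, and invoke $\cR^M=\cR\circ M$ only at the end to close the loop. These are equivalent, and your ordering is arguably a little cleaner because it avoids having to re-identify the model-dependent remainder $\widehat R$ after the $M$-transformation.

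One small slip to fix in your last paragraph: in dimension $d=3$ the symbols $\RSVW$, $\RSWV$, $\RSWW$ have homogeneities $0-4\kappa$, $0-4\kappa$, $-\tfrac12-5\kappa$ respectively, i.e.\ they are of \emph{strictly negative} homogeneity and are therefore \emph{not} part of the truncation error $\varrho$. They need no separate enumeration: they are already handled by the multiplicativity step $\cR\widehat F(MU,MV)(z)=\widehat F\bigpar{(\Pi^\eps_z MU(z))(z),(\Pi^\eps_z MV(z))(z)}$ you wrote one line earlier, because each of them factors as a product with at least one factor ($\RSIW$ or $\RSY$) of strictly positive homogeneity whose pointwise reconstruction vanishes. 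Only the symbols genuinely discarded by truncation (all of homogeneity $\geqs\bar\gamma>0$) need the separate \lq\lq positive homogeneity $\Rightarrow$ vanishes at the base point\rq\rq\ check.
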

\begin{proof}
Applying $M$ to the expansions of $U$ and $V$ given in~\eqref{eq:fix03}, we see
that $MU$ differs from $U$ and $MV$ differs from $V$ only by components of
homogeneity $\frac32-\kappa$ at least. Applying $\cR$ to $MU$ and $MV$ and
using~\eqref{eq:rc03}, we obtain that as in the proof
of Proposition~\ref{prop:fixed_point_equiv} 
\begin{align*}
\hat u(z) &= \chi_\eps(z) + \varphi(z)\;, \\
\hat v(z) &= \chi^Q_\eps(z) + \psi(z)\;.
\end{align*}
On the other hand, applying $M$ to the right-hand side of~\eqref{eq:fix01a} and
using the fact that $M$ commutes with $\cI$ and $\cE$ and leaves polynomials
invariant, we get 
\begin{align*}
 MU &= \cI\Rplus \bigbrak{\Xi + MF(U,V)} + \widehat R(MU,MV) + Gu_0\;,\\
 MV &= \cE\Rplus MU_- + \cQ\Rplus MU_+ + \widehat Qv_0\;. 
\end{align*}
Applying the reconstruction operator, we thus obtain 
\begin{align*}
 \hat u(t,x) &= G * \Rplus \bigbrak{\xi^\eps(t,x)+(\cR MF(U,V))(t,x)} +
Gu_0(t,x)\;, \\
 \hat v(t,x) &= \int_0^t Q(t-s)\hat v(s,x)\6s + \widehat Qv_0(t,x)\;.
\end{align*}
The claim then follows from the fact that~\eqref{eq:rc05} implies 
\[
(\cR MF(U,V))(z) 
= (\cR \widehat F(MU,MV))(z)
= \widehat F(\hat u(z),\hat v(z))\;, 
\]
where the last equality follows from the same argument as the one used in the
proof of Proposition~\ref{prop:fixed_point_equiv}. 
\end{proof}

Finally, we compute the expression of $\widehat F$. To simplify the
computations, we assume that $C_1=C_1(\RSV)$ and $C_2=C_2(\RSWV)$ are the
only non-zero renormalisation constants, which is a possible choice as seen in
the proof of Proposition~\ref{prop:convergence_renorm}. This is also the place
where the assumption $\gamma_2=0$ turns out to be necessary in order to obtain a
simple renormalised equation. 

\begin{prop}[Expression for the renormalised $F$]
\label{prop:Fhat}
Let $F$ be the cubic polynomial given in~\eqref{eq:fix02} with
either $d=2$, or $d=3$ and $\gamma_2=0$, and
let $M$ be the renormalisation transformation defined in~\eqref{eq:rgen08},
where the only nonzero constants are $C_1=C_1(\RSV)$ and $C_2=C_2(\RSWV)$. Then 
\begin{equation}
 \label{eq:cubic01}
 \widehat F(u,v) = 
 F(u,v) - c_0(\eps) - c_1(\eps)u {}- c_2(\eps)v\;,
\end{equation}
where the coefficients are given by
\begin{align}
\nonumber
c_0(\eps) &= \beta_1 (C_1 + 3\gamma_1 C_2)\;, \\
\nonumber
c_1(\eps) &= 3\gamma_1 (C_1 + 3\gamma_1 C_2)\;, \\
c_2(\eps) &= \gamma_2 C_1\;.
\label{eq:cubic02}
\end{align}
\end{prop}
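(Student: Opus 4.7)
The plan is to invoke Proposition~\ref{prop:fixed_point_renorm}, which reduces the claim to finding $\widehat F$ satisfying $\widehat F(MU,MV) = MF(U,V) + \varrho(U,V)$ with $\varrho$ of strictly positive homogeneity. Since $MU-U$ and $MV-V$ themselves consist only of components of strictly positive homogeneity (as already observed in the proof of Proposition~\ref{prop:fixed_point_renorm}), this amounts to requiring that the coefficients of the negatively and zero homogeneous symbols --- namely $\unit$, $\RSI$ and $\RSoI$ --- in $F(U,V) - MF(U,V)$ agree with those of $c_0\unit + c_1 U + c_2 V$, which, using the Ansatz~\eqref{eq:fix03}, are $c_0 + c_1\varphi + c_2\psi$, $c_1$ and $c_2$ respectively.

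The computation itself proceeds by expanding $F(U,V)$ via~\eqref{eq:fix02} and~\eqref{eq:fix03}, and applying $M$ to each monomial via~\eqref{eq:M-AC}, bearing in mind that by assumption the only non-zero renormalisation constants are $C_1 = C_1(\RSV)$ and $C_2 = C_2(\RSWV)$. The identities $M\RSI = \RSI$, $M\RSoI = \RSoI$, $M\RSVo = \RSVo$, $M\RSVoo = \RSVoo$, together with the fact that the leading-order products relevant for $\alpha_1 u$, $\alpha_2 v$, $\beta_2 uv$, $\beta_3 v^2$, $\gamma_3 uv^2$ and $\gamma_4 v^3$ contain no $\cI(\Xi)^2$-sub-pattern for $L_\RSV$ to contract, show that these monomials contribute nothing modulo positive homogeneity. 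For the three remaining monomials, I would compute: (i) $\beta_1(U^2 - MU^2) = \beta_1 C_1\unit + (\text{pos.\ hom.})$, from~\eqref{eq:M-AC-V}; (ii) substituting $U^3 = \RSW + 3\varphi\RSV + 3\gamma_1\RSWW + 3b_1\RSWV + (\text{pos.\ hom.})$ and applying~\eqref{eq:M-AC-W}, \eqref{eq:M-AC-V}, \eqref{eq:M-AC-WW} and~\eqref{eq:M-AC-WV} to obtain $\gamma_1(U^3 - MU^3) = 3\gamma_1(C_1 + 3\gamma_1 C_2)\RSI + 3\gamma_1(\varphi C_1 + b_1 C_2)\unit + (\text{pos.\ hom.})$; and (iii) using $U^2V = \RSWo + \psi\RSV + (\text{pos.\ hom.})$ together with the analogous identity $M\RSWo = \RSWo - C_1\RSoI$ (obtained by the same single $\cI(\Xi)^2$-contraction as in~\eqref{eq:M-AC-W}) to get $\gamma_2(U^2V - MU^2V) = \gamma_2 C_1\RSoI + \gamma_2\psi C_1\unit + (\text{pos.\ hom.})$.

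Summing these contributions and substituting $b_1 = \beta_1 + 3\varphi\gamma_1 + \psi\gamma_2$, the coefficient of $\RSI$ in $F(U,V) - MF(U,V)$ equals $3\gamma_1(C_1 + 3\gamma_1 C_2)$, that of $\RSoI$ equals $\gamma_2 C_1$, and that of $\unit$ factorises as $(\beta_1 + 3\varphi\gamma_1 + \psi\gamma_2)(C_1 + 3\gamma_1 C_2)$. Matching with $c_1$, $c_2$ and $c_0 + c_1\varphi + c_2\psi$ yields $c_0 = \beta_1(C_1 + 3\gamma_1 C_2)$ and $c_1 = 3\gamma_1(C_1 + 3\gamma_1 C_2)$ as announced, and forces $c_2 = \gamma_2(C_1 + 3\gamma_1 C_2)$, which coincides with the claimed $c_2 = \gamma_2 C_1$ precisely when $\gamma_1\gamma_2 C_2 = 0$. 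The latter holds automatically under the hypotheses: in $d=2$ one has $C_2 = 0$ (no sunset-diagram divergence), whereas in $d=3$ the assumption $\gamma_2 = 0$ is imposed.

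The main technical task is the combinatorial bookkeeping in the second step, since $M$ is not a multiplicative morphism on $\cH$ and the substitutions $L_\RSV$, $L_\RSWV$ must be applied to every $\cI(\Xi)^2$- and $\cI(\cI(\Xi)^2)\cI(\Xi)^2$-sub-pattern in the expansion of the abstract products $U^iV^j$, with the correct multiplicities. Conceptually, however, the crux of the argument is the consistency constraint $\gamma_1\gamma_2 C_2 = 0$ isolated above: were it violated, the $\psi$-dependent contribution to the $\unit$-coefficient could not match $c_2\psi$ for any scalar $c_2$ compatible with the prescribed $\RSoI$-coefficient, and $\widehat F$ would fail to be of the simple polynomial form~\eqref{eq:cubic01}. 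This is exactly the reason the hypothesis $\gamma_2 = 0$ is required in dimension $d=3$.
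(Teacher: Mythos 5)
Your proof is correct and follows essentially the same route as the paper's: reduce via Proposition~\ref{prop:fixed_point_renorm}, compute $F(U,V)-MF(U,V)$ modulo strictly positive homogeneity using~\eqref{eq:M-AC}, and match the coefficients of $\unit$, $\RSI$, $\RSoI$ against $c_0+c_1\varphi+c_2\psi$, $c_1$, $c_2$. One imprecision worth flagging: the displayed expansions of $U^3$ and $U^2V$ are not literally exhaustive up to positive homogeneity --- $U^3$ also contains e.g.\ $3\varphi^2\RSI$, $\varphi^3\unit$ and $3\RSV\pscal{\nabla\varphi}{X}$, while $U^2V$ also contains the products of $\RSV$ with the order-$\tfrac12$ and order-$1$ parts of $V$, producing $\hat a_i\RSWW$- and $\hat b_1\RSWV$-type terms; the former are $M$-invariant modulo positive homogeneity and hence harmless, while the latter vanish under the stated hypotheses ($\gamma_2=0$ or $C_2=0$) but are exactly where the paper locates the obstruction for $\gamma_2\neq0$, $d=3$, since the coefficient $\hat b_1=\cQ(b_1\,\cdot)$ attached to $\RSWV$ in $U^2V$ is not a scalar and thus $\gamma_2 C_2\hat b_1\unit$ cannot be absorbed into $c_0\unit$. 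Your consistency constraint $\gamma_1\gamma_2 C_2=0$ between the $\RSoI$- and $\unit$-coefficients is a valid necessary condition extracted from the partial bookkeeping, but it sits alongside rather than replacing this structural point.
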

\begin{proof}
The fact that $(U,V)$ differs from $(MU,MV)$ only by terms of order
$\frac32-\kappa$ implies 
\[
 \widehat F(U,V) = MF(U,V) + \varrho'(U,V)\;,
\]
where all components of $\varrho'(U,V)$ have strictly positive homogeneity.
Thus we only have to compute the difference $MF(U,V) - F(U,V)$, which
reduces to determining the terms of $F(U,V)$ which are modified by the
renormalisation map, or, more precisely, which terms of homogeneity less or
equal $0$ appear when applying $M$. 

The computation is straightforward, though somewhat lengthy. In practice, it
suffices to expand the quadratic terms of $F(U,V)$ up to order
$\frac12-\kappa$, and cubic terms up to order $0-2\kappa$. To explain where the
condition $\gamma_2=0$ arises if $d=3$, we allow for general $\gamma_2\in\R$ for
the time being. 

Consider first the quadratic part $F_2(U,V) = \beta_1U^2 + \beta_2UV +
\beta_3V^2$ of $F(U,V)$. It turns out that when the renormalisation map $M$ is
applied, only the term proportional to $\RSV$ of $F_2(U,V)$ yields a
contribution of not strictly positive homogeneity. Namely, one has 
\[
 M F_2(U,V) - F_2(U,V) =
-\beta_1C_1\unit
+\varrho_2(U,V)\;,
\]
where $\varrho_2(U,V)$ has strictly positive homogeneity.
It remains to determine the contribution of the cubic part $F_3(U,V) =
\gamma_1U^3+\gamma_2U^2V+\gamma_3UV^2+\gamma_4V^3$. Substituting $U$ and $V$ by
their expansion~\eqref{eq:fix03} and sorting terms by homogeneity yields an
expression of the form 
\[
 F_3(U,V) = F_{3;-\frac32}(U,V) + F_{3;-1}(U,V) + F_{3;-\frac12}(U,V) +
F_{3;0}(U,V) + \varrho_3(U,V)\;, 
\]
where each $F_{3;\beta}$ contains only terms of homogeneity
$\beta-\Order{\kappa}$ and the remainder $\varrho_3$ has strictly positive
homogeneity. We treat all of these terms separately. For instance, we have 
$F_{3;-\frac32}(U,V) =
\gamma_1\RSW+\gamma_2\RSWo+\gamma_3\RSWoo+\gamma_4\RSWoo$, where the
first two terms have to be renormalised, yielding 
\[
 M F_{3;-\frac32}(U,V) - F_{3;-\frac32}(U,V) =
-3\gamma_1C_1\RSI 
-\gamma_2C_1\RSoI
+\varrho_{3;-\frac32}(U,V)\;.
\]
The term $F_{3;-1}(U,V)$ yields a contribution 
\[
 M F_{3;-1}(U,V) - F_{3;-1}(U,V) =
-3\gamma_1C_1\varphi\unit 
-\gamma_2C_1\psi\unit
+\varrho_{3;-1}(U,V)
\]
stemming from the terms $\RSV$. 
Consider next the term $F_{3;0}(U,V)$. It contains $2$ terms proportional to 
$\RSWV$, whose renormalisation yields the contribution 
\[
 M F_{3;0}(U,V) - F_{3;0}(U,V) = -\bigbrak{3\gamma_1 b_1 
 + \gamma_2 \hat b_1}C_2 \unit+\varrho_{3;0}(U,V)\;. 
\]
Finally, in the term $F_{3;-\frac12}(U,V)$, it is the terms in 
$\RSWW$ and $\RSWWo$ that need to be renormalised. One obtains 
\begin{align*}
 M F_{3;-\frac12}(U,V) - F_{3;-\frac12}(U,V) ={}&
-\bigbrak{9\gamma_1a_1 + 3 \gamma_2 \hat a_1}C_2\RSI \\
&{}-\bigbrak{9\gamma_1a_2 + 3\gamma_2\hat a_2}C_2\RSoI
+\varrho_{3;-\frac12}(U,V)\;.
\end{align*}
In the case $\gamma_2=0$, using $a_i=\gamma_i$ and replacing the $b_i$ by their
expressions~\eqref{eq:fix04}, and adding the last four expressions, we see that
we can always factor out the expression $\varphi\unit+\RSI$, which coincides
with $U$ up to terms of strictly positive homogeneity. 
The result follows. 

In case $\gamma_2\neq 0$, there are additional terms such as $\gamma_2 C_2 \hat
b_1\unit$, which do not admit a simple expression in terms of $U$ and $V$, so
that the renormalised equation has no simple closed-form expression. 
However when $d=2$, then we can choose $C_2=0$ because the symbol $\RSWV$
has strictly positive homogeneity and does not have to be renormalised. Then we
can also factor out the quantity $\psi\unit+\RSoI$, yielding the expression
for $c_2(\eps)$ in~\eqref{eq:cubic02}.
\end{proof}


\section{Proof of the main results}
\label{sec_proof}

Since Theorem~\ref{thm:FHN} is a particular case of Theorem~\ref{thm:cubic}, we
proceed directly to proving the latter result. 

\begin{proof}[Proof of Theorem~\ref{thm:cubic}]
Consider first the situation on a fixed, sufficiently small time interval
$[0,T]$. Fix $\eps>0$ and let 
\begin{align*}
(U^\eps,V^\eps) &= \cS_T(u_0,v_0,M_\eps Z^\eps)\;, \\
(U,V) &= \cS_T(u_0,v_0,\Zhat)\;,
\end{align*}
where $\cS_T$ is the solution map given in Proposition~\ref{prop:fixed_point}
(we have suppressed the dependence on $W$, which is the same for both
fixed points). For any $\delta>0$, the bound~\eqref{eq:prop_fixed_point02},
Markov's inequality and Proposition~\ref{prop:convergence_renorm} imply 
\[
 \Bigprob{\seminormff{U^\eps}{U}_{\gamma,\eta;T} + 
 \seminormff{V^\eps}{V}_{\gamma,\eta;T} > C_0\delta}
 \leqs \frac{1}{\delta} \E \seminormff{M_\eps Z^\eps}{\Zhat}_{2\gamma+\alpha;O}
 \lesssim \frac{\eps^\theta}{\delta}\;.
\]
It follows that $(U^\eps,V^\eps)$ converges to $(U,V)$ in probability as
$\eps\to0$. 

Next, let $(u,v) = (\cR U, \cR V)$ and $(u^\eps,v^\eps) = (\cR U^\eps, \cR
V^\eps)$ (where we do not indicate the model-dependence of the reconstruction
operators $\cR$). The fact that $(u^\eps,v^\eps)$ converges to $(u,v)$ in
probability as $\eps\to0$ is a consequence of the
bound~\cite[(3.4)]{Hairer2014} in the reconstruction theorem, combined with the
definitions~\eqref{eq:defmod1} of $\norm{\Pi}_{\gamma;\fraK}$
and~\eqref{eq:norm_gamma_eta2} of $\normDgamma{\cdot}_{\gamma,\eta;\fraK}$. 

The convergence result can be extended from a fixed time interval $[0,T]$ to
the random time interval up to the first exit from a ball of radius $L$ in
exactly the same way as in~\cite[Section~7.3]{Hairer2014}. In particular, the
required continuous dependence on the model is proved
in~\cite[Cor.~7.12]{Hairer2014}. 

Proposition~\ref{prop:fixed_point_renorm} shows that $(u^\eps,v^\eps)$
satisfies the system~\eqref{eq:rcubic03}, where the explicit expression of
$\widehat F$ is a consequence of Proposition~\ref{prop:Fhat}. The
expressions~\eqref{eq:FHN05} and~\eqref{eq:rcubic04} for the renormalisation
constants in dimension $d=3$ follow from~\eqref{eq:cubic02}, where the
only nonzero terms are $C_1(\eps)$, defined in~\eqref{eq:rFHN12}, and
$C_2(\eps)$, defined in~\eqref{eq:rFHN21}. 

In the case $d=2$, Table~\ref{tab:FF_AllenCahn} shows that only the terms
$\RSI$, $\RSV$ and $\RSW$ need to be renormalised, to that the
expression~\eqref{eq:FHN07} for the renormalisation constant follows by taking
$C_2(\eps)=0$ in~\eqref{eq:cubic02}. 
\end{proof}

\begin{proof}[Proof of Theorem~\ref{thm:vectorial}]
The only difference with the previous case is that we have to take into account
the presence of the $n$ different operators $\cE_i$, $i=1,\dots,n$, introduced
in Remark~\ref{rem:E_vectorial}. This means that the fixed-point equation for
$V$ in~\eqref{eq:fix01} is replaced by $n$ different equations of the form 
\[
 V_i = \cK^{Q_i}_\gamma \Rplus U + \widehat Q_iv_0\;, 
\qquad i=1,\dots,n\;.
\]
Since $n$ is finite, it is straightforward to check that the proofs of
Proposition~\ref{prop:fixed_point_equiv} and Proposition~\ref{prop:fixed_point}
carry over to this situation. 

It remains to check the expressions~\eqref{eq:rvect05} for the renormalisation
constants. To compute the diverging part of $\widehat F(U,V) - F(U,V)$, it is
in fact sufficient to determine the coefficients of $\RSV$, $\RSWV$ and $\RSWW$
in $F(U,V)$, since these are the only ones that need to be renormalised. These
can only result from the terms $\beta_1 U^2 + \gamma_1 U^3$ in $F(U,V)$, since
all other terms will contain at least one of the $\cE_i$. It is now sufficient
to check that $U$ has the same expansion as in~\eqref{eq:fix03}, except that the
first relation in~\eqref{eq:fix04} becomes 
\[
 b_1 = \beta_1 + 3\varphi\gamma_1 + \sum_{i=1}^n \psi_i \gamma_{2,i}\;,
\]
where $\psi_i$ is the coefficient of $\unit$ in $V_i$. The result then follows 
as in the proof of Proposition~\ref{prop:Fhat}, taking into account the
modified value of $b_1$. 
\end{proof}


\appendix


\section{Proof of Lemma~\ref{lem_trans_inv}}
\label{app_proof_translation}

The proof is by induction on the size of the regularity structure, which is
constructed recursively, starting with $\Xi$, $\unit$ and the $X_i$, and
applying the operators $\cI$ and $\cE$ as well as multiplication to already
existing symbols. We show that the properties~\eqref{eq:translation}
hold on the polynomial regularity structure and for $\tau=\Xi$, and that they
are stable under the algebraic operations extending this structure. 

\begin{enum}
\item 	The claim certainly holds on the polynomial part of
the regularity structure, since 
\begin{align*}
(\Pi^\eps_{z+h}X^k)(\bar z+h) &= (\bar z+h-z-h)^k = (\Pi^\eps_z X^k)(\bar z)\;,
\\
\Gamma^\eps_{z+h,\bar z+h}X^k &= (X-\bar z-h+z+h)^k = \Gamma^\eps_{z\bar
z}X^k\;, 
\end{align*}
and 
\begin{align*}
 \Gamma^\eps_{z+h,\bar z+h}X^k &= (\Id\otimes\gamma^\eps_{z+h,\bar z+h})\Delta
X^k
 = X^k + \unit\pscal{\gamma^\eps_{z+h,\bar z+h}}{X^k}\;, \\
 \Gamma^\eps_{z,\bar z}X^k &= (\Id\otimes\gamma^\eps_{z,\bar z})\Delta X^k
 = X^k + \unit\pscal{\gamma^\eps_{z,\bar z}}{X^k}\;.
\end{align*}

\item 	Using the fact that $\Delta(\Xi)=\Xi\otimes\unit$, it is also immediate
to check that~\eqref{eq:translation} holds for $\tau=\Xi$ (this reflects
translation invariance of the noise).

\item	We show that if the relations~\eqref{eq:translation} hold for $\tau$,
then they also hold with $\tau$ and $\bar\tau$ replaced by $\cI\tau$ and
$\cJ_k\tau$. Indeed, 
\begin{align*}
\pscal{f^\eps_{z+h}}{\cJ_k\tau} 
&= -\int D^k K(z+h-\bar z) (\Pi^\eps_{z+h}\tau)(\bar z)\6\bar z \\
&= -\int D^k K(z-\tilde z) (\Pi^\eps_{z+h}\tau)(\tilde z+h)\6\tilde z \\
&= -\int D^k K(z-\tilde z) (\Pi^\eps_z\tau)(\tilde z)\6\tilde z \\
&= \pscal{f^\eps_z}{\cJ_k\tau}\;,
\end{align*}
and 
\begin{align*}
(\Pi^\eps_{z+h}\cI\tau)(\bar z+h) 
&= \int K(\bar z+h-z') (\Pi^\eps_{z+h}\tau)(z')\6z' +
\sum_\ell \frac{(\bar z-z)^\ell}{\ell!}\pscal{f^\eps_{z+h}}{\cJ_\ell\tau} \\
&= \int K(\bar z-\tilde z) (\Pi^\eps_z\tau)(\tilde z)\6\tilde z +
\sum_\ell \frac{(\bar z-z)^\ell}{\ell!}\pscal{f^\eps_z}{\cJ_\ell\tau} \\
&= (\Pi^\eps_z\cI\tau)(\bar z)\;.
\end{align*}
Furthermore, it is shown in the proof of~\cite[Thm.~8.24]{Hairer2014} that  
\[
 \Gamma^\eps_{z+h,\bar z+h}(\cI\tau) = 
 (\Id\otimes\gamma^\eps_{z+h,\bar z+h})(\cI\otimes\Id)\Delta\tau
 + \sum_\ell \frac{(X-z_\gamma)^\ell}{\ell!} \pscal{\gamma^\eps_{z+h,\bar
z+h}}{\cJ_\ell\tau}\;,
\]
where $z_\gamma\in\R^{d+1}$ has components
$(z_\gamma)_i=-\pscal{\gamma^\eps_{z+h,\bar z+h}}{X_i}$. 
The first term on the right-hand side is equal to 
$\cI \Gamma^\eps_{z+h,\bar z+h}(\tau)$. 
The induction hypothesis implies that all terms are equal to their value for
$h=0$, proving the required identity for $\Gamma^\eps_{z\bar z}(\cI\tau)$. The
identity for $\gamma^\eps_{z\bar z}(\cJ_k\tau)$ follows in a similar manner. 

\item 	In a similar way, using the definition~\eqref{eq:E01} of
$\Pi^\eps_z(\cE\tau)$, it is straightforward to check that if $\tau$
satisfies~\eqref{eq:translation}, then $\cE\tau$
satisfies~\eqref{eq:translation} as well. 

\item 	If $\tau,\sigma$ satisfy~\eqref{eq:translation}, then 
\[
 (\Pi^\eps_{z+h}\tau\sigma)(\bar z+h) 
 = (\Pi^\eps_{z+h}\tau)(\bar z+h)(\Pi^\eps_{z+h}\sigma)(\bar z+h)
 = (\Pi^\eps_z\tau)(\bar z)(\Pi^\eps_z\sigma)(\bar z)
 = (\Pi^\eps_z\tau\sigma)(\bar z)\;.
\]
Furthermore, 
\[
 \Gamma^\eps_{z+h,\bar z+h}(\tau\sigma) 
 = (\Id\otimes\gamma^\eps_{z+h,\bar z+h})\Delta(\tau\sigma)
 = (\Id\otimes\gamma^\eps_{z\bar z})\Delta(\tau\sigma)
 = \Gamma^\eps_{z\bar z}(\tau\sigma)\;.
\]
Here we have used the fact that the induction hypothesis applies because
the second component of $\Delta(\tau)$ always depends only on already computed
quantities, cf.~Table~1. The translation invariance of $\gamma^\eps_{z\bar
z}(\tau\sigma)$ follows. 
\qed
\end{enum}


\section{Proof of Lemma~\ref{lem:KKQ}}
\label{app_proof_KKQ}

It follows from the definition of $K$ that it satisfies $\abs{K(z)}\lesssim
\norm{z}_{\fraks}^{-3}$. Computing the derivatives of $K$, one sees that
it is singular of order $-3$ in the sense of~\cite[Def.~10.12]{Hairer2014}.
Thus the first bound in~\eqref{eq:KKQ01} follows
from~\cite[Lemma~10.17]{Hairer2014}. 

The second bound in~\eqref{eq:KKQ01} is a crucial estimate, 
which allows us to avoid renormalisation of certain symbols involving the 
operator $\cE$. Therefore, we provide additional details in the proof of 
this bound. Recalling the definitions of $K^Q$ and $Q$ we have 
\[
K^Q(t,x)=\int_\R Q(s,x)K_\eps(t-s,x)\6s = \int_0^{2T\wedge (t+\eps^2)} 
Q(s,x)K_\eps(t-s,x)\6s
\]
since $K_\eps(t,x)$ is supported on $\set{t>-\eps^2}$. By a change of
variable $s\mapsto t-s$ in the last integral and using the boundedness of $Q$ on $[0,T]$
for a final time $T>0$, it follows that 
\[
K^Q(t,x)\asymp  \int_{-\eps^2}^{t} K_\eps(s,x)\6s,
\]
where we recall that the notation $f\asymp g$ indicates that we have both 
$f\lesssim g$ and $g\lesssim f$. Therefore, combining the last characterisation 
of $K^Q(t,x)$ with the first bound in~\eqref{eq:KKQ01} yields
\begin{align}
\nonumber
\abs{\KQ_\eps(t,x)} 
 &\lesssim \int_{-\eps^2}^t \frac{1}{(\abs{s}^{1/2}+\abs{x}+\eps)^{3}} \6s \\
\nonumber
 &\lesssim \int_{-\eps^2}^{(\abs{x}+\eps)^2}
 \frac{1}{(\abs{x}+\eps)^{3}} \6s
 + \int_{(\abs{x}+\eps)^2}^t \frac{1}{\abs{s}^{3/2}} \6s \\
 &\lesssim \frac{1}{\abs{x}+\eps}\;.
 \nonumber
\end{align} 
and this concludes the proof of the second bound in~\eqref{eq:KKQ01}.
The first bound in~\eqref{eq:KKQ02} follows from 
\[
 \int K_\eps(z)^2 \6z 
 \lesssim \int \frac{1}{(\norm{z_1}_\fraks+\eps)^6}\6z_1 
\asymp \int_0^1 \int_0^1 \frac{r^2\6r}{(t^{1/2}+r+\eps)^6} \6t 
\asymp \frac{1}{\eps}\;. 
\]
The second bound in~\eqref{eq:KKQ02} is again important, since it leads to
terms which involve $\cE$ but which do not have to be renormalised, so we
provide more details in the proof of this bound. Note that 
\begin{align*}
 \int K_\eps(t,x)\KQ_\eps(t,x) \6x 
 &\lesssim \int \frac{1}{(|t|^{1/2}+|x|+\eps)^3}\frac{1}{(|x|+\eps)} \6 x\;, \\
 &\lesssim \int \frac{1}{(|t|^{1/2}+\|x\|+\eps)^3}\frac{1}{(\|x\|+\eps)} \6 x\; \\
 &\lesssim \iint \frac{1}{(|t|^{1/2}+r+\eps)^3}\frac{1}{(r+\eps)} \sin(\theta_1)
r^2 \6 \theta\6 r   \;,
\end{align*}
using the equivalence of norms and spherical coordinates $(r,\theta)=(r,\theta_1,\theta_2)$
in $\R^3$. Therefore, we find the estimate 
\[
 \int K_\eps(t,x)\KQ_\eps(t,x) \6x 
 \lesssim \int_0^1 \frac{r^2\6r}{(r+\abs{t}^{1/2}+\eps)^3(r+\eps)}
 \asymp \frac{1}{\abs{t}^{1/2}+\eps}\; 
\]
Integrating over time then yields
\[
 \iint K_\eps(t,x)\KQ_\eps(t,x) \6x \6t 
 \lesssim \int_{-\eps^2}^1 \frac{1}{\abs{t}^{1/2}+\eps} \6t 
 \lesssim 1\;.
\]
which proves the second bound in~\eqref{eq:KKQ02}.
The third bound in~\eqref{eq:KKQ03} follows from the fact that 
\[
 \int \KQ_\eps(t,x)^2 \6x 
 \lesssim \int_0^1 \frac{r^2\6r}{(r+\eps)^2} \asymp 1\;.
\]
To obtain the bound on $\cQ_0$, we distinguish the regimes
$\norm{z}_{\fraks}\leqs\eps$ and $\norm{z}_{\fraks}>\eps$. If
$\norm{z}_\fraks\leqs\eps$, the same computation as the one made for the
integral of $K_\eps(z)^2$ shows that $\abs{\cQ_0(z)}\lesssim 1/\eps$, while for
$\norm{z}_\fraks>\eps$, we can use~\cite[Lemma~10.14]{Hairer2014} to
obtain 
\[
 \abs{\cQ_0(z)} \lesssim \int \frac{1}{\norm{z_1}_\fraks^3}
\frac{1}{\norm{z-z_1}_\fraks^3}\6z_1 
\lesssim \frac{1}{\norm{z}_\fraks}\;.
\]
Using spherical coordinates, one can show that for all $a,b\geqs0$ and
all $n\geqs1$, one has 
\[
 I_n := \int \frac{1}{(\abs{x_1}+a)^n} \frac{1}{\abs{x-x_1}+b} \6x_1 
 \asymp \int_0^1 \frac{r^2\6r}{(r+a)^n(r+\abs{x}+b)}\;.
\]
From this we easily get that whenever $a\geqs b\geqs0$,  
\[
 I_1 \lesssim 1 + \abs{x} + a^2\;, \qquad 
 I_3 \lesssim \frac{1+\abs{\log a}}{\abs{x}+a}\;,
\] 
and thus 
\begin{align*}
 \abs{\cQ_1^\eps(t,x)} &\lesssim \int_0^1
 \frac{1+\abs{\log(t^{1/2}+\eps)}}{\abs{x}+t^{1/2}+\eps} \6t \lesssim 1\;, \\
 \abs{\cQ_2^\eps(t,x)} &\lesssim 1+\abs{x}+\eps^2 \lesssim 1\;.
\end{align*} 
It remains to bound the terms $I_{ij}(\eps)$. It is in fact
sufficient to check that 
\begin{align*}
 I_{00}(\eps) 
 &\lesssim \int_0^1\int_0^1
\frac{r^2\6r}{(t^{1/2}+r)^3(t^{1/2}+r+\eps)^2}\6t
\lesssim \abs{\log\eps}\;, \\
 I_{01}(\eps) 
 &\lesssim \int_0^1\int_0^1
\frac{r^2\6r}{(t^{1/2}+r)^3(t^{1/2}+r+\eps)}\6t
\lesssim 1\;, 
\end{align*}
since the other terms can be bounded above by $I_{01}(\eps)$. Elementary
computations show that these bounds indeed hold true.
\qed


\section{Proof of Lemma~\ref{lem:KQeps}}
\label{app_proof_KQeps}

Let us recall Definition~10.12 in~\cite{Hairer2014}. Given a scaling
$\fraks$, a smooth function $K: \R^{d+1}\setminus\set{0}\to\R$ is said to be
(singular) of order $\zeta$ if for every sufficiently small multiindex $k$, one
has 
\[
 \abs{D^k K(z)} \lesssim \norm{z}_\fraks^{\zeta - \abs{k}_\fraks}
 \qquad
 \forall z \text{ such that } \norm{z}_\fraks \leqs 1\;.
\]
More precisely, we require that there exists $m\geqs0$ such that 
\[
 \normDgamma{K}_{\zeta;m} := \sup_{k\colon\abs{k}_\fraks \leqs m} 
 \sup_{z\in\R^{d+1}} \norm{z}_\fraks^{\abs{k}_\fraks - \zeta} \abs{D^k K(z)} <
\infty\;.
\]
Note that in~\cite[Def~10.12]{Hairer2014}, the second supremum indeed runs over
the whole space, but in practice we will only apply it to compactly supported
functions. 
Lemma~10.17 in~\cite{Hairer2014} states that if $K$ is of order
$\zeta\in(-\abs{\fraks},0)$, then $K_\eps=K*\varrho_\eps$ has  bounded
derivatives of all orders, satisfying 
\[
 \abs{D^k K_\eps(z)} \lesssim (\norm{z}_\fraks + \eps)^{\zeta-\abs{k}_\fraks}
\normDgamma{K}_{\zeta;\abs{k}_\fraks}\;.
\]
Furthermore, for all $\bar\zeta\in[\zeta-1,\zeta)$ and $m\geqs0$, one has 
\begin{equation}
 \label{eq:K*eps2}
 \normDgamma{K-K_\eps}_{\bar\zeta;m} 
 \lesssim \eps^{\zeta-\bar\zeta} \normDgamma{K}_{\zeta;m+\max\set{\fraks_i}}\;. 
\end{equation} 
Consider now the quantity 
\[
 \KQ(t,x) - \KQ_\eps(t,x) 
 = \int_0^t \bigbrak{K(s,x) - K_\eps(s,x)}Q(t-s)\6s 
\]
(in fact, the lower bound can be replaced by $0\vee(t-2T)$). We have 
\[
 \bigabs{\KQ(t,x) - \KQ_\eps(t,x)}
 \lesssim \int_0^t \bigabs{K(s,x) - K_\eps(s,x)}\6s\;. 
\]
We already know that $K$ is singular of order $-3$, and thus 
$\normDgamma{K}_{-3;m}$ is bounded for any $m$. In particular, we have
by~\eqref{eq:K*eps2} with $m=0$ the bound 
\[
 \normDgamma{K-K_\eps}_{\bar\zeta;0} 
 \lesssim \eps^{-3-\bar\zeta} \normDgamma{K}_{-3;2} 
\]
for any $\bar\zeta\in[-4,-3)$, 
which implies 
\[
 \bigabs{K(s,x) - K_\eps(s,x)}
 \leqs \frac{\normDgamma{K-K_\eps}_{\bar\zeta;0}}{\norm{z}_\fraks^{-\bar\zeta}}
 \lesssim \frac{\eps^{-3-\bar\zeta}}{\norm{z}_\fraks^{-\bar\zeta}} 
 \normDgamma{K}_{-3;2}\;.
\]
Taking $\bar\zeta=-3-\theta$ for some $\theta\in(0,1]$, we obtain 
\[
 \bigabs{K(s,x) - K_\eps(s,x)}
 \lesssim \frac{\eps^\theta}{\norm{z}_\fraks^{3+\theta}} 
 \normDgamma{K}_{-3;2}\;,
\]
and thus, since $\normDgamma{K}_{-3;2}\lesssim 1$, 
\[
 \bigabs{\KQ(t,x) - \KQ_\eps(t,x)}
 \lesssim \eps^\theta\int_0^t \frac{\6s}{s^{(3+\theta)/2} +
\abs{x}^{3+\theta}}
 \lesssim \frac{\eps^\theta}{\abs{x}^{1+\theta}}\;,
\]
proving~\eqref{eq:KQeps01}. To prove~\eqref{eq:KQeps02}, we use spherical
coordinates as in the proof of Lemma~\ref{lem:KKQ} to get, for any constant
$c$ of order $1$,
\begin{align*}
\int_{\abs{x_1}\leqs c} \frac{\6x_1}{\abs{x_1}^{1+\theta}
\abs{x-x_1}^{1+\theta}} 
\asymp&{} \int_0^1 \frac{r^2}{r^{1+\theta}} \int_0^\pi 
\frac{\sin\phi\6\phi}{(r^2+\abs{x}^2-2r\abs{x}\cos\phi)^{(1+\theta)/2}} \6r \\
\asymp&{} \int_0^1 \frac{(r+\abs{x})^{1-\theta} - \bigabs{r-\abs{x}}^{1-\theta}}
{2\abs{x}r^\theta}\6r \\
\lesssim&{} \int_0^{2\abs{x}} \frac{\abs{x}^{1-\theta}}{\abs{x}r^\theta}\6r \\
&{}+ \int_{2\abs{x}}^1 \frac{r^{1-\theta}}{\abs{x}r^\theta} 
\biggbrak{\biggpar{1+\frac{\abs{x}}{r}}^{1-\theta} -
\biggpar{1-\frac{\abs{x}}{r}}^{1-\theta}} \6r  \\
\asymp&{} \abs{x}^{1-2\theta} + \int_{2\abs{x}}^1 \frac{\6r}{r^{2\theta}} \\
\lesssim&{} 1
\end{align*}
for $\abs{x}\lesssim 1$, provided $2\theta < 1$. The result follows since
$t\mapsto\KQ(t,x)$ has compact support. 
\qed


\newpage


\small
\bibliography{BK}
\bibliographystyle{abbrv}               

\newpage
\tableofcontents

\goodbreak

\vfill

\bigskip\bigskip\noindent
{\small 
Nils Berglund \\ 
Universit\'e d'Orl\'eans, Laboratoire {\sc Mapmo} \\
{\sc CNRS, UMR 7349} \\
F\'ed\'eration Denis Poisson, FR 2964 \\
B\^atiment de Math\'ematiques, B.P. 6759\\
45067~Orl\'eans Cedex 2, France \\
{\it E-mail address: }{\tt nils.berglund@univ-orleans.fr}\\

\noindent 
Christian Kuehn \\
Vienna University of Technology \\ 
Institute for Analysis and Scientific Computing\\
1040 Vienna, Austria\\
{\it E-mail address: }{\tt ck274@cornell.edu}
}

\end{document}